\newtheorem{theorem}{Theorem}[section]
\newtheorem{lemma}[theorem]{Lemma}
\newtheorem{prop}[theorem]{Proposition}
\newtheorem{corollary}[theorem]{Corollary}
\newtheorem{claim}[theorem]{Claim}
\theoremstyle{definition}
\theoremstyle{remark}
\newtheorem{remark}[theorem]{Remark}
\numberwithin{equation}{section}
\DeclareMathOperator{\spec}{Spec}
\DeclareMathOperator{\Sym}{Sym}
\DeclareMathOperator{\Stab}{Stab}
\DeclareMathOperator{\init}{in}
\DeclareMathOperator{\SL}{SL}
\DeclareMathOperator{\GL}{GL}
\DeclareMathOperator{\PGL}{PGL}
\DeclareMathOperator{\codim}{codim}
\DeclareMathOperator{\Grass}{Grass}
\DeclareMathOperator{\Gr}{Gr}
\DeclareMathOperator{\frakRes}{\mathfrak{Res}}
\DeclareMathOperator{\DS}{\mathfrak{DS}}
\DeclareMathOperator{\Res}{Res}
\def\VV{\mathbb{V}}
\def\AA{\mathbf{A}}
\newcommand{\gitq}{/\hspace{-0.25pc}/}
\def\A{\mathcal{A}}
\def\twohead{\twoheadrightarrow}
\def\cA{\mathcal{A}}
\def\PP{\mathbb{P}}
\def\ZZ{\mathbb{Z}}
\def\CC{\mathbb{C}}
\def\HH{\mathrm{H}}
\def\cO{\mathcal{O}}
\def\cU{\mathcal{U}}
\def\cN{\mathcal N}
\def\D{\mathcal{D}}
\DeclareMathOperator{\Ann}{Ann}
\DeclareMathOperator{\Image}{Im}
\def\blfootnote{\xdef\@thefnmark{}\@footnotetext}
\begin{document}

\title[Associated form morphism]{Associated form morphism}\blfootnote{{\bf Mathematics Subject Classification:} 14L24, 13A50, 13H10.}\blfootnote{{\bf Keywords:} Geometric Invariant Theory, associated forms.}

\author[Maksym Fedorchuk]{Maksym Fedorchuk}
\address[Fedorchuk]{Department of Mathematics\\
Boston College\\
140 Commonwealth Ave\\
Chestnut Hill, MA 02467, USA}
\email{maksym.fedorchuk@bc.edu}

\author[Alexander Isaev]{Alexander Isaev}
\address[Isaev]{Mathematical Sciences Institute\\
Australian National University\\
Acton, Canberra, ACT 2601, Australia}
\email{alexander.isaev@anu.edu.au}

\thanks{The first author was partially supported by the NSA Young Investigator grant H98230-16-1-0061 
and Alfred P. Sloan Research Fellowship.}

\begin{abstract} We study the geometry of the morphism that sends 
a smooth hypersurface of degree $d+1$ in $\PP^{n-1}$ to its associated hypersurface of degree $n(d-1)$
in the dual space $\bigl(\PP^{n-1}\bigr)^\vee$.
\end{abstract}

\maketitle

\setcounter{tocdepth}{1}
\tableofcontents

\section{Introduction} 

One of the first applications of Geometric Invariant Theory is a construction
of the moduli space of smooth degree $m$ hypersurfaces in a fixed projective space 
$\PP^{n-1}$ \cite{GIT}. This moduli space is an affine GIT quotient 
\[
U_{m,n}:=\left(\PP\HH^0\bigl(\PP^{n-1}, \cO(m)\bigr) \setminus {\Delta}\right) \gitq \PGL(n),\label{E:smooth-GIT}
\]
where $\Delta$ is the discriminant divisor parameterizing singular hypersurfaces.
The GIT construction produces a natural compactification
\[
U_{m,n} \subset V_{m,n}:=\left(\PP\HH^0\bigl(\PP^{n-1}, \cO(m)\bigr)\right)^{ss} \gitq \PGL(n),\label{E:compact-GIT}
\]
given by a categorical quotient of the locus of GIT semistable hypersurfaces. We call $V_{m,n}$ the 
GIT compactification of $U_{m,n}$.

The subject of this paper is a certain rational map $V_{m,n} \dashrightarrow V_{n(m-2),n}$, where $n\ge 2$, $m\ge 3$ and where we exclude the (trivial) case $(n,m)=(2,3)$. While this
map has a purely algebraic construction, which we shall recall soon, it has several surprising geometric properties that we
establish in this paper. In particular, this rational map restricts 
to a locally closed immersion $\bar A\colon U_{m,n}\to V_{n(m-2),n}$, 
and often contracts the discriminant divisor in $V_{m,n}$. Consequently, the closure of the image of $\bar A$ in $V_{n(m-2),n}$
is a compactification of the GIT moduli space $U_{m,n}$ that is different from the GIT compactification $V_{m,n}$. 

To define $\bar A$, we consider the \emph{associated form morphism} defined on the space
of smooth homogeneous forms $f\in \CC[x_1,\dots,x_n]$ of fixed degree $m\ge 3$.
Given such an $f$,  its associated form $A(f)$ is a degree $n(m-2)$ homogeneous form
in the graded dual polynomial ring $\CC[z_1,\dots, z_n]$. In our recent paper \cite{fedorchuk-isaev}, we proved that 
the associated form $A(f)$ is always polystable in the sense of GIT. Consequently, we obtain a
morphism $\bar A$ from $U_{m,n}$ to $V_{n(m-2),n}$ sending the image of $f$ in $U_{m,n}$ to the image of $A(f)$ in $V_{n(m-2),n}$.

Our first result is that the morphism $\bar A$ is an isomorphism onto its image, a locally closed subvariety in the target. 
\begin{theorem}
\label{T:barA}
The morphism 
\[
\bar A \colon U_{m,n} \to V_{n(m-2),n} 
\]
is a locally closed immersion.
\end{theorem}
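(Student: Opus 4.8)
The plan is to show that $\bar A$ is both injective and an immersion at the level of tangent spaces, then invoke a standard criterion (a bijective immersion onto its image is a locally closed immersion, since $U_{m,n}$ and the image are both quasi-projective varieties over $\CC$). The key structural input is the classical theory of the associated form: recall that $A(f)$ is built from the Milnor algebra $M_f = \CC[x_1,\dots,x_n]/(\partial_1 f,\dots,\partial_n f)$, which for smooth $f$ is a graded Artinian Gorenstein algebra with socle in degree $\sigma = n(m-2)$, and $A(f)$ is essentially the Macaulay inverse system (dual socle generator) of $M_f$. The first step is therefore to translate the problem into a statement about Milnor algebras: two smooth forms $f, g$ of degree $m$ have $\PGL(n)$-equivalent associated forms if and only if they have isomorphic Milnor algebras as graded $\CC$-algebras (up to the natural $\GL(n)$-action), and then to use the Mather–Yau-type theorem (in the graded/homogeneous setting) that a smooth form is determined up to linear equivalence and scaling by its Milnor algebra. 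This reduces injectivity of $\bar A$ to injectivity of the map $f \mapsto M_f$ on isomorphism classes.

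Concretely, I would proceed as follows. First I would record the equivariance: $A(g\cdot f) = (g^{-1})^T \cdot A(f)$ up to a nonzero scalar, for $g \in \GL(n)$, so that $\bar A$ is well-defined on $\PGL(n)$-orbits. Second, for injectivity, suppose $A(f)$ and $A(g)$ lie in the same $\PGL(n)$-orbit; after acting by a linear change of coordinates I may assume $A(f) = c\, A(g)$ for a scalar $c$. Since the associated form recovers the Macaulay inverse system, this forces an isomorphism of graded Artinian Gorenstein quotients $\CC[x]/\mathrm{Ann}(A(f)) \cong \CC[x]/\mathrm{Ann}(A(g))$; identifying $\mathrm{Ann}(A(f))$ with the Jacobian ideal $J_f$ (this identification, that the apolar ideal of the associated form is exactly the Jacobian ideal, is the crucial lemma and is presumably established in \cite{fedorchuk-isaev} or recalled earlier), we conclude $M_f \cong M_g$ compatibly with a linear change of variables. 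Then the homogeneous Mather–Yau theorem — a smooth homogeneous form of degree $m \ge 3$ (excluding $(n,m)=(2,3)$) is determined up to scaling and $\GL(n)$ by its Jacobian ideal, equivalently by the graded isomorphism class of its Milnor algebra — gives that $f$ and $g$ are $\GL(n)$-equivalent up to scalar, i.e. they define the same point of $U_{m,n}$.

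Third, for the immersion property, I would compute the differential of $\bar A$. It suffices to check that $d\bar A$ is injective on tangent spaces at every point. The tangent space to $U_{m,n}$ at $[f]$ is $\HH^0(\PP^{n-1},\cO(m))/\bigl(\CC f + T_f(\PGL(n)\text{-orbit})\bigr)$, i.e. degree-$m$ forms modulo $f$ and modulo the image of the Jacobian ideal in degree $m$; and similarly for the target at $[A(f)]$. I would differentiate the explicit polynomial formula for $A(f)$ (a determinant/resultant-type expression in the partials of $f$) to get a linear map on these quotients, and then argue injectivity again via the apolarity dictionary: a first-order deformation of $f$ that does not move $[f]$ in $U_{m,n}$ but kills $dA$ would produce a nontrivial first-order deformation of the Gorenstein algebra $M_f$ that is trivial as a deformation of its inverse system, contradicting the equivalence between deformations of an Artinian Gorenstein algebra and deformations of its dual socle generator. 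Alternatively, and perhaps more cleanly, injectivity of $d\bar A$ follows formally once one knows $\bar A$ is injective together with the fact that the dimension of the image Gorenstein-algebra stratum matches, via generic smoothness in characteristic zero, but I would prefer the direct apolarity computation to avoid properness issues.

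The main obstacle I anticipate is the immersion/differential step rather than injectivity: set-theoretic injectivity follows fairly directly from Mather–Yau plus equivariance, but to upgrade a continuous bijection onto a constructible image to a locally closed immersion one genuinely needs the tangent-map computation (or an argument that the scheme-theoretic image is reduced and the map is unramified). Carrying out the differential of the resultant-type formula for $A(f)$ and identifying its kernel with the "trivial deformations" subspace is where the real work lies; the cleanest route is to phrase everything through the Macaulay inverse system functor and invoke the standard fact that, for Artinian Gorenstein algebras, passing to the dual socle generator induces an isomorphism on deformation spaces, so that $d\bar A$ is identified with an isomorphism modulo the tangent directions coming from $\PGL(n)$, which are quotiented out on both sides.
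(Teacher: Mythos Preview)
Your approach is genuinely different from the paper's and contains two real gaps.

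The paper does not attack $\bar A$ directly. It factors $\bar A=\bar\AA\circ\widetilde\nabla$, where $\widetilde\nabla$ is the gradient map $[f]\mapsto\langle\partial_1 f,\dots,\partial_n f\rangle$ on GIT quotients and $\bar\AA$ is the associated-form morphism on the Grassmannian quotient. That $\bar\AA$ is a locally closed immersion is quoted from \cite{alper-isaev-assoc-binary}; the new content is Theorem~\ref{T:gradient}, that $\overline\nabla$ is a closed immersion. The proof of the latter uses Luna's \'etale slice theorem, a precise identification of $\Stab(\nabla([f]))$ as the group generated by $\Stab([f])$ and a canonical torus $\Theta(f)$ attached to the direct-sum decomposition of $f$, preservation of polystability under $\nabla$, and a small GIT lemma. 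Finiteness of $\overline\nabla$, taken from \cite{fedorchuk-ss}, is an essential input.

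\textbf{Gap 1: unramifiedness at direct sums.} Your differential argument assumes that once one quotients by the $\PGL(n)$-orbit directions ``on both sides'', the induced map on normal slices is injective. But the stabilizers on the two sides differ. Take $f=x_1^{m}+\cdots+x_n^{m}$: here $\Stab([f])$ is finite, while $A(f)$ is (up to scalar) $z_1^{m-2}\cdots z_n^{m-2}$, whose stabilizer contains the full diagonal torus. Via Luna, unramifiedness of $\bar A$ at $\pi_0([f])$ amounts to unramifiedness at the origin of
\[
N_{[f]}\gitq\Stab([f])\ \longrightarrow\ N_{[A(f)]}\gitq\Stab([A(f)]),
\]
and the target is a quotient by a strictly larger reductive group. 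The equivalence between deformations of a Gorenstein Artin algebra and of its dual socle generator only tells you that $\AA$ is an immersion upstairs; it says nothing about this slice map when the stabilizer jumps. The paper handles exactly this point: it shows the extra stabilizer is the torus $\Theta(f)$, and then proves a GIT lemma to the effect that if $G$ is generated by a reductive $H$ and a connected reductive $T$, a $G$-equivariant closed immersion $\spec A\hookrightarrow\spec B$ with $\spec A^{H}\to\spec B^{G}$ quasi-finite forces $\spec A^{H}\simeq\spec A^{G}$, whence the slice map is a closed immersion.

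\textbf{Gap 2: injective and unramified does not imply locally closed immersion.} The ``standard criterion'' you invoke is false without a properness or finiteness hypothesis. Restrict the normalization of a nodal plane cubic to the complement of one preimage of the node: the resulting morphism of varieties over $\CC$ is bijective and unramified, yet not an immersion (the source is smooth, the target is not). In the paper this issue is absorbed by the factorization: $\overline\nabla$ is finite by \cite{fedorchuk-ss}, and a finite, injective, unramified morphism of varieties over $\CC$ is a closed immersion; composing with the locally closed immersion $\bar\AA$ gives the result. Your direct route provides no substitute for this finiteness.

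Finally, your injectivity step is essentially correct but not an independent shortcut: the ``homogeneous Mather--Yau'' statement you need (that $J_f$ determines $[f]$ up to $\GL(n)$) reduces, after comparing degree $m-1$ pieces, to the description of the fibers of $\nabla$ as $\Theta(f)$-orbits, which is precisely \cite[Corollary~3.12]{fedorchuk-direct} --- the same ingredient the paper uses.
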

In the process of establishing Theorem \ref{T:barA}, we generalize results of \cite{alper-isaev-assoc-binary} to the case of an arbitrary number of variables,
and, in particular, prove that the auxiliary gradient morphism sending a semistable form to the span of its partial derivatives 
gives rise to a closed immersion on the level of quotients (see Theorem \ref{T:gradient}). 

Our second main result is Theorem \ref{T:barA1}, which describes the rational map\linebreak $\bar A \colon V_{m,n} \dashrightarrow V_{n(m-2),n}$ in codimension one. Namely, we study how $\bar A$ extends to the generic point of the discriminant divisor in the GIT
compactification (see Corollary \ref{cor:n-1,n}), and prove that for $n=2,3$ and $m\ge 4$, as well as for $n\geq 4$, $m\gg 0$, the morphism $\bar A$ contracts the discriminant divisor to a lower-dimensional subvariety in the target (see Corollary \ref{C:contraction}). In the process, we prove that the image of $\bar A$ contains the orbit of the Fermat hypersurface in its closure and as a result obtain a new proof of the generic smoothness of associated forms (see Corollary \ref{C:generic-smoothness}).


\subsection{Notation and conventions}
Let $S:=\Sym V\simeq \CC[x_1,\dots, x_n]$ be a symmetric algebra of
an $n$-dimensional vector space $V$, with its standard grading. Let $\D:=\Sym V^{\vee} \simeq \CC[z_1,\dots, z_n]$
be the graded dual of $S$, with the structure of the $S$-module given by
the \emph{polar pairing} $S\times \D \to \D$, which is defined by
\begin{equation}\label{E:apolarity-action}
g(x_1,\dots,x_n)\circ F(z_1,\dots,z_n):=g(\partial/\partial z_1, \dots, \partial/\partial z_1)F(z_1,\dots,z_n).
\end{equation}

A homogeneous polynomial $f\in S_{m}$ is called a \emph{direct sum} if, after a linear change of variables, it can be 
written as the sum of two non-zero polynomials in 
disjoint sets of variables: 
\[
f=f_1(x_1,\dots, x_a)+f_2(x_{a+1},\dots, x_n).\label{E:direct-sum-def}
\]
We will use the recognition criteria for direct sums established in \cite{fedorchuk-direct}, and so we keep the pertinent terminology 
of that paper. We will say that $f\in S_{m}$ is a $k$-partial Fermat form for some $k\leq n$, if, after a linear change of variables, 
it can be written as follows: 
\[
f=x_1^{m}+\cdots+x_k^{m}+g(x_{k+1},\dots, x_n).\label{E:partial-fermat}
\]
Clearly, any $n$-partial Fermat form is linearly equivalent to the standard Fermat form. Furthermore, 
all $k$-partial Fermat forms are direct sums.
We denote by $\DS_m$ the locus of direct sums in $S_{m}$. 

\section{Associated form of a balanced complete intersection}\label{sectionmainres}

Fix $d\ge 2$. In what follows the trivial case $(n,d)=(2,2)$ will be excluded. A length $n$ regular sequence $g_1,\dots, g_n$ of elements of $S_d$ will be called a balanced complete intersection of type $(d)^n$. 
It defines a graded Gorenstein Artin $\CC$-algebra 
\[
\cA(g_1,\dots,g_n):=S/(g_1,\dots, g_{n}),
\] whose socle lies in degree $n(d-1)$. In \cite{alper-isaev-assoc-binary} an element $\AA(g_1,\dots, g_n) \in \D_{n(d-1)}$, called
\emph{the associated form of $g_1,\dots,g_n$}, was introduced. The form $\AA(g_1,\dots, g_n)$ is a homogeneous Macaulay inverse system,
or a dual socle generator, of the algebra $\cA(g_1,\dots,g_n)$. It follows that
$[\AA(g_1,\dots, g_n)]\in \PP \D_{n(d-1)}$ depends only on the linear span 
$\langle g_1,\dots, g_n\rangle$, which we regard as a point in $\Grass(n, S_d)$. 

Recall that $g_1,\dots, g_n$ is a regular sequence in $S_d$ if and only if
$\langle g_1,\dots, g_n\rangle$ does not in lie in the resultant divisor $\frakRes \subset \Grass(n, S_d)$.
Setting $\Grass(n, S_d)_{\Res} := \Grass(n, S_d) \setminus \frakRes$, 
we obtain a morphism
\[
\AA \colon \Grass(n, S_d)_{\Res} \to \PP \D_{n(d-1)}.
\]
Given $f\in S_{d+1}$, the partial derivatives
$\partial f/\partial x_1,\dots, \partial f/\partial x_n$ form a regular sequence if and only 
if $f$ is non-degenerate. For a non-degenerate $f\in S_{d+1}$, in \cite{{alper-isaev-assoc},{eastwood-isaev1}} the \emph{associated form of $f$} was defined to be
\[
A(f):=\AA(\partial f/\partial x_1,\dots, \partial f/\partial x_n) \in \D_{n(d-1)}.
\]
Summarizing, we obtain a commutative diagram
\[
\begin{gathered}
\xymatrix{
\PP(S_{d+1})_{\Delta} \ar[rd]_{\nabla} \ar[rr]^{A} & & \PP(\D_{n(d-1)}) \\
& \Grass(n, S_d)_{\Res} \ar[ur]_{\AA}, &
}
\end{gathered}\label{D:affine-triangle}
\]
where $\PP(S_{d+1})_{\Delta}$ denotes the complement to the discriminant divisor in $\PP(S_{d+1})$ and $\nabla$ is the morphism sending a form into the linear span of its first partial derivatives. The above diagram is equivariant with respect to the standard $\SL(n)$-actions on $S$ and $\D$.
By \cite{alper-isaev-assoc-binary}, the morphism $\AA$ is a locally closed immersion, and it was proved in \cite{fedorchuk-isaev} that
$\AA$ sends polystable orbits to polystable orbits. Passing to the GIT quotients, we thus obtain
a commutative diagram
\begin{equation}\label{D:affine-triangle-GIT}
\begin{gathered}
\xymatrix{
\PP(S_{d+1})_{\Delta}\gitq \SL(n) \ar[rd]_{\widetilde{\nabla}} \ar[rr]^{\bar{A}} & & \PP(\D_{n(d-1)})^{ss} \gitq \SL(n) \\
& \Grass(n, S_d)_{\Res}\gitq  \SL(n) \ar[ur]_{\bar{\AA}}, &
}
\end{gathered}
\end{equation}
where $\widetilde{\nabla}:=\nabla\gitq \SL(n)$ is a finite injective morphism (see \cite{fedorchuk-ss}) and
 $\bar{\AA}:=\AA\gitq \SL(n)$ is a locally closed immersion. 
The main focus of this paper is the geometry of diagram \eqref{D:affine-triangle-GIT}. 

Noting that by \cite{fedorchuk-ss} the map $\nabla$ extends to a morphism from $\PP(S_{d+1})^{ss}$ to\linebreak $\Grass(n, S_d)^{ss}$ and thus induces a map $\overline{\nabla}$ of the corresponding GIT quotients, we will now state our two main results as follows:

\begin{theorem}
\label{T:gradient} 
The morphism $\overline{\nabla} \colon \PP(S_{d+1})^{ss}\gitq \SL(n) \to \Grass(n, S_d)^{ss}\gitq  \SL(n)$ is a closed immersion.
\end{theorem}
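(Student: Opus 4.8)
The map $\nabla\colon \PP(S_{d+1})^{ss}\to \Grass(n,S_d)^{ss}$ is $\SL(n)$-equivariant, so on the level of GIT quotients we get a proper morphism $\overline{\nabla}$ of projective varieties. To show $\overline{\nabla}$ is a closed immersion it suffices to show it is injective on points and injective on tangent spaces (and, for good measure, that it is unramified and radicial). Since the source is normal (GIT quotients of smooth projective varieties are normal) and the map is finite (this is already recorded in the excerpt via \cite{fedorchuk-ss}, where $\widetilde\nabla$ is noted to be finite and injective on the stable locus), a finite birational morphism onto a normal target would be an isomorphism, but here the target is bigger; what we actually need is that $\overline{\nabla}$ is a \emph{closed} immersion, i.e. finite, unramified, and injective. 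Finiteness is given; so the crux is (i) set-theoretic injectivity on the full semistable quotient, and (ii) injectivity of differentials, i.e. showing $\overline{\nabla}$ is unramified.

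\textbf{Step 1: Injectivity on closed orbits.} By the standard description of points of a GIT quotient, two semistable forms $f_1,f_2$ map to the same point of $\PP(S_{d+1})^{ss}\gitq\SL(n)$ iff the closures of their $\SL(n)$-orbits meet, which (by Kempf) happens iff their unique closed orbits in the semistable locus coincide. So I must show: if $f_1,f_2\in \PP(S_{d+1})^{ss}$ have closed $\SL(n)$-orbits and $\langle \partial f_1\rangle$, $\langle \partial f_2\rangle$ lie in the same closed $\SL(n)$-orbit in $\Grass(n,S_d)^{ss}$, then $f_1$ and $f_2$ are $\SL(n)$-equivalent. The key input is a Polarization/recovery statement: a form $f$ with semistable closed orbit can be reconstructed (up to scalar and up to the $\SL(n)$-action) from the subspace $\langle \partial f/\partial x_1,\dots,\partial f/\partial x_n\rangle\subset S_d$ — essentially because $f$ lies in the image of the Euler multiplication map and is determined by its gradient ideal together with the constraint that $\partial_i\partial_j f$ is symmetric in $i,j$. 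Concretely, the integrability condition $\partial_j g_i=\partial_i g_j$ for a basis $g_i$ of the span (which holds when $g_i=\partial f/\partial x_i$ and, conversely, forces the span to come from a single $f$ up to the choices of basis, i.e. up to $\GL(n)$ acting on indices) pins down $f$ from $\langle g_1,\dots,g_n\rangle$ up to scalar and the residual $\GL(n)$. One then checks this reconstruction is compatible with passing to closed orbits and with limits along one-parameter subgroups, so that equality of closed orbits downstairs forces equality upstairs. This is the argument that generalizes \cite{alper-isaev-assoc-binary} and \cite{fedorchuk-ss} from the stable locus to the whole semistable locus.

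\textbf{Step 2: Unramifiedness / injectivity of differentials.} It is enough to work \'etale-locally, and by Luna's slice theorem near a closed orbit it suffices to compare the differentials of $\nabla$ on the slices, i.e. to show that for a semistable $f$ with closed orbit and stabilizer $G_f\subset\SL(n)$, the induced map on $G_f$-invariant normal spaces is injective. The differential of $\nabla$ at $f$ sends a deformation $h\in S_{d+1}$ to the deformation $(\partial h/\partial x_1,\dots,\partial h/\partial x_n)$ of the subspace $\langle\partial f/\partial x_i\rangle$, modulo the tangent space to the $\SL(n)$-orbit in the Grassmannian. The kernel of $d\nabla$ at $f$ modulo the orbit direction consists of those $h$ for which the perturbed derivatives still span (to first order) the same subspace after an infinitesimal change of basis, i.e. $\partial h/\partial x_i\equiv \sum_j c_{ij}\,\partial f/\partial x_j \pmod{\text{orbit}}$ for some matrix $(c_{ij})$. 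A direct computation — again using the symmetry $\partial_i\partial_j f=\partial_j\partial_i f$ and Euler's identity $mf=\sum x_i\partial_i f$ — shows any such $h$ is tangent to the $\SL(n)$-orbit of $f$ together with scaling, hence zero in the quotient tangent space. This makes $\overline\nabla$ unramified at every closed-orbit point, and combined with finiteness and injectivity (Step 1) gives that $\overline\nabla$ is a closed immersion.

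\textbf{Main obstacle.} The hard part is Step 1 at the boundary: extending the orbit-separation/reconstruction argument from the locus of stable (or even just smooth) forms, where $\nabla$ and $\AA$ are honest locally closed immersions by \cite{alper-isaev-assoc-binary}, to \emph{strictly} semistable forms with positive-dimensional stabilizers and nontrivial orbit-closure behavior. Here one must control how the span of partials degenerates under one-parameter subgroups and verify that the closed orbit downstairs is genuinely $\nabla$ of the closed orbit upstairs (rather than a further degeneration), which requires a careful analysis of the Hilbert–Mumford numerical function for $\nabla$ — precisely the semistable extension of $\nabla$ furnished by \cite{fedorchuk-ss} — and of the possible obstructions coming from forms whose gradient subspace is a limit of gradient subspaces but is itself not literally $\langle\partial F/\partial x_i\rangle$ for any semistable $F$ in that orbit.
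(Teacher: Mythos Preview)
Your overall skeleton (finite $+$ injective $+$ unramified $\Rightarrow$ closed immersion) matches the paper's, and your kernel computation in Step~2 is essentially the content of the paper's Lemma on the fiber of $\nabla$: indeed $\ker(d\nabla_{[f]})$ is the tangent space to the orbit of a certain torus $\Theta(f)\subset\SL(n)$, hence lies in the tangent space to the $\SL(n)$-orbit, so the induced map $\iota\colon N_{[f]}\to N_{\nabla([f])}$ of normal spaces is injective. But two genuine gaps remain.

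\textbf{Polystability preservation (Step 1).} You assert that if $f_1,f_2$ are polystable and $\nabla([f_1]),\nabla([f_2])$ lie in the same closed orbit, then $f_1,f_2$ are equivalent. But to use this for injectivity of $\overline{\nabla}$ you must first know that $\nabla([f_i])$ \emph{is} polystable whenever $f_i$ is; otherwise $\pi_1(\nabla([f_1]))=\pi_1(\nabla([f_2]))$ could hold with $\nabla([f_1]),\nabla([f_2])$ in distinct non-closed orbits sharing a common limit, and your reconstruction argument says nothing. Your ``Main obstacle'' paragraph flags exactly this and then leaves it unresolved. In the paper this is a substantial step (Proposition~\ref{P:polystability} and Corollary~\ref{C:nablaprespolystability}): one runs a Hilbert--Mumford argument, passes to initial terms after an upper-triangular coordinate change as in \cite{fedorchuk-ss}, and uses a lemma characterizing direct sums via the weights of $\init_\lambda(\partial f/\partial x_i)$ to derive a contradiction. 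The integrability/Euler heuristic you sketch does not substitute for this.

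\textbf{Stabilizer jump (Step 2).} Your reduction via Luna is misstated: the slice model of $\overline{\nabla}$ is
\[
s(f)\colon N_{[f]}\gitq \Stab([f]) \longrightarrow N_{\nabla([f])}\gitq \Stab(\nabla([f])),
\]
with \emph{different} groups on the two sides. When $f$ is a direct sum (which is exactly the strictly semistable case you worry about), $\Stab(\nabla([f]))\supsetneq\Stab([f])$: the torus $\Theta(f)$ rescaling the summands fixes $\nabla([f])$ but not $[f]$. Injectivity of $\iota$ therefore does \emph{not} by itself make $s(f)$ unramified, since the target is quotiented by a strictly larger group. The paper closes this gap in two moves: first it shows $\Stab(\nabla([f]))$ is generated by $\Theta(f)$ and $\Stab([f])$ (Corollary~\ref{C:gradient-stabilizer}), and then it proves a small GIT lemma (Lemma~\ref{L:GIT-lemma}) saying that for a $G$-equivariant closed immersion $\spec A\hookrightarrow\spec B$ with $G=\langle T,H\rangle$, $T$ connected reductive, if $\spec A^H\to\spec B^G$ is quasi-finite then in fact $\spec A^H\simeq\spec A^G$, whence $s(f)$ is a closed immersion. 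Your argument needs an analogue of this; as written, it proves only that $\iota$ is injective, which is the easy half.
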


\begin{theorem}\label{T:barA1} The rational map 
\[
\bar{A}\colon \PP(S_{d+1})^{ss}\gitq \SL(n) \dashrightarrow \PP(\D_{n(d-1)})^{ss} \gitq \SL(n)
\]
extends to the generic point  
of the discriminant divisor $\Delta \gitq \SL(n)$ in the GIT compactification and contracts the discriminant divisor to a lower-dimensional variety for all sufficiently large $d$ as described in Corollaries {\rm\ref{cor:n-1,n}} and {\rm\ref{C:contraction}}.
\end{theorem}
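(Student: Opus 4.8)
\textbf{Proof strategy for Theorem \ref{T:barA1}.}

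The plan is to analyze the rational map $\bar A$ along the discriminant by understanding, in a pencil, the limiting behavior of the associated form as a smooth hypersurface degenerates to a generic point of $\Delta$. First I would identify the generic point of $\Delta \gitq \SL(n)$: a generic singular hypersurface has a single ordinary double point, so after a change of coordinates its defining form is $f = x_n q(x_1,\dots,x_{n-1}) + (\text{higher order in }x_n) + h(x_1,\dots,x_{n-1})$ with an isolated $A_1$ singularity at the coordinate point; the key observation is that the partial derivatives $\partial f/\partial x_1,\dots,\partial f/\partial x_n$ still form a regular sequence away from a codimension-two locus, so the gradient morphism $\nabla$ is defined at the generic point of $\Delta$ and sends it into $\Grass(n,S_d)$ — in fact, by \cite{fedorchuk-ss}, $\nabla$ is already a morphism on all of $\PP(S_{d+1})^{ss}$. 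Combining this with Theorem \ref{T:gradient}, which gives that $\overline{\nabla}$ is a closed immersion, it suffices to control the associated form morphism $\bar\AA$ along $\overline{\nabla}(\Delta \gitq \SL(n))$, i.e. to understand $\AA(g_1,\dots,g_n)$ when $(g_1,\dots,g_n)$ is the generic member of $\frakRes$ that lies in the image of $\nabla$.

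Next I would compute this limit explicitly using the structure of the Artin algebra. When $g_1,\dots,g_n$ degenerates so that the complete intersection acquires an embedded point or the socle degree drops, one writes $\cA(g_1,\dots,g_n)$ as a flat limit and tracks the Macaulay inverse system. Concretely, for a one-parameter family $f_t$ with $f_0$ having an $A_1$ singularity, I expect the Macaulay inverse system of $S/(\partial f_t/\partial x_i)$ to have a well-defined limit in $\PP\D_{n(d-1)}$ that can be read off from the local algebra of the singularity — the Milnor algebra of an $A_1$ point being just $\CC$ in the relevant sense — so the limiting associated form should be (a multiple of) a power or a product involving the hyperplane section $z$ dual to the direction of the node, times the associated form of the "transverse" data $h(x_1,\dots,x_{n-1})$. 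This is the content that Corollary \ref{cor:n-1,n} is meant to record. The extension across the generic point of $\Delta$ then follows because this limit is independent of the chosen pencil (it depends only on $\nabla(f_0)$, which is well-defined), so $\bar A$ is regular there.

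Finally, for the contraction statement I would estimate dimensions. The discriminant $\Delta \gitq \SL(n)$ has dimension $\dim \PP S_{d+1} - 1 - \dim \PP\GL(n)$, while the closure of $\bar A$ of the discriminant is contained in $\bar\AA$ of a proper subvariety of $\Grass(n,S_d)$ — namely the closure of $\overline{\nabla}(\Delta \gitq \SL(n))$, which by the explicit description above is parameterized by the location of the node together with the transverse form $h$, hence has strictly smaller dimension than $\Delta \gitq \SL(n)$ once $d$ is large enough that the count $\dim$ (forms $h$ in $n-1$ variables of degree $d+1$) $+\, (n-1)$ falls below $\dim$ (forms $f$ in $n$ variables of degree $d+1$) $-\,1$, both taken modulo the respective group actions. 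Carrying out this comparison for small $n$ ($n=2,3$, any $m\ge 4$) directly and for general $n$ asymptotically in $d$ gives Corollary \ref{C:contraction}; the incidental appearance of the Fermat form's orbit in the closure (Corollary \ref{C:generic-smoothness}) comes from taking an even more degenerate limit within the image and checking its associated form is smooth. The main obstacle I anticipate is the explicit computation of the limiting inverse system at the generic point of $\Delta$: one must show the flat limit of the Gorenstein algebras does not pick up extra socle (which would change the socle degree and hence force the limit out of $\PP\D_{n(d-1)}$), and this requires a careful local analysis of how the regular sequence $\{\partial f_t/\partial x_i\}$ degenerates — essentially proving that the only obstruction is along a controlled, lower-dimensional locus and that generically the associated form extends with the expected factorization.
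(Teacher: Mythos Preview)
Your overall architecture is right—reduce to the Grassmannian via $\nabla$, use that $\overline{\nabla}$ is a closed immersion (Theorem~\ref{T:gradient}), and then analyze what happens over the generic point of $\frakRes$. But the core computation and the mechanism for extension are both off.

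\textbf{The predicted limit is wrong.} You expect the limiting associated form at a nodal hypersurface to be a \emph{product} $z^{a}\cdot A(h)$ involving the linear form dual to the node and the associated form of the transverse slice $h$. The paper shows something quite different: the value of $\bar A$ at the generic point of $\Delta$ is (the class of) a $1$-partial Fermat form $z_1^{n(d-1)}+G(z_2,\dots,z_n)$, i.e.\ a \emph{sum}. The mechanism is not a Milnor-algebra factorization but rather the elementary observation (Proposition~\ref{P:power}) that if $p\in\VV(I_U)$ corresponds to $L\in V^{\vee}$ then $L^{n(d-1)-1}\in (I_U)_{n(d-1)-1}^{\perp}$. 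This forces the gradient point of the limit to contain a pure power, and the polystable representative in the GIT quotient is then a partial Fermat form.

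\textbf{The extension mechanism is different.} Your plan to track the Macaulay inverse system through a flat degeneration runs into the problem you yourself flag at the end: when $f_0$ is nodal, $S/(\partial f_0/\partial x_i)$ is not Artinian, the socle degree jumps, and the naive limit of $A(f_t)$ in $\PP\D_{n(d-1)}$ is typically unstable (indeed, it tends toward a pure power of a linear form), so it does not descend to the GIT quotient. The paper avoids this entirely by introducing an auxiliary morphism
\[
\AA_{\Gr}\colon W_{n,d}\longrightarrow \Grass(n,\D_{n(d-1)-1}),\qquad U\mapsto (I_U)_{n(d-1)-1}^{\perp},
\]
which agrees with $\nabla\circ\AA$ on $\Grass(n,S_d)_{\Res}$ but is defined on the larger open set $W_{n,d}=\{U:\dim (S/I_U)_{n(d-1)-1}=n\}$. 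The substantive step is Proposition~\ref{P:A-Gr-defined}: if $\VV(I_U)$ is $k$ reduced points in general linear position then $U\in W_{n,d}$. This is proved by a Koszul homology computation together with the Cayley--Bacharach theorem, not by any local deformation analysis. Once $\AA_{\Gr}$ is known to be regular on $Z_1$ (and lands in the semistable locus for generic $U$), the closed immersion $\overline{\nabla}$ lets one pull back to define $\bar\AA$ and hence $\bar A$.

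\textbf{The dimension count uses different data.} Because the limiting value is a $1$-partial Fermat form, the image of the discriminant sits inside the locus of such forms, which is parameterized by degree $n(d-1)$ forms in $n-1$ variables (Corollary~\ref{C:contraction}). Your proposed count via the transverse slice $h\in\CC[x_1,\dots,x_{n-1}]_{d+1}$ is based on the incorrect product description; even if it gave a smaller bound, it is not what actually controls the image.
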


\section{Preliminaries on dualities}

\label{S:dualities}
In this section we collect results on Macaulay inverse systems of graded
Gorenstein Artin $\CC$-algebras. We also recall the duality between the 
Hilbert points of such algebras and the gradient points of their inverse systems.

Recall that we regard $S=\CC[x_1,\dots,x_n]$ as a ring of polynomial differential operators
on the graded dual ring $\D:=\CC[z_1,\dots,z_n]$ via polar pairing \eqref{E:apolarity-action}.
For every positive $m$, the restricted pairing
\[
S_{m}  \times \D_{m} \to \CC\label{E:pairing}
\]
is perfect and so defines an isomorphism 
\begin{equation}\label{E:duality}
\D_{m} \simeq S_{m}^{\vee},
\end{equation}
where, as usual, $V^{\vee}$ stands for the dual of a vector space $V$.

Given $W\subset \D$, we define 
\[
W^{\perp}:=\{f\in S \mid f\circ g=0, \text{ for all $g\in W$}\}\subset S.\label{E:perp-of-D}
\]
Similarly given $U\subset S$, we define 
\[
U^{\perp}:=\{g\in \D \mid f\circ g=0, \text{ for all $f\in U$}\}\subset \D.\label{E:perp-of-S}
\]

\begin{claim}\label{C:explicit}
Isomorphism \eqref{E:duality} sends an element $\omega\in S_{m}^{\vee}$
to the element 
\[
\mathfrak{D}_{\omega}:=\sum_{i_1+\cdots+i_n=m} \frac{\omega\bigl(x_1^{i_1}\cdots x_n^{i_n}\bigr)}{i_1!\cdots i_n!} z_1^{i_1} \cdots z_n^{i_n}\in \D_m.
\]
Conversely, an element $g\in \D_{m}$ is mapped by isomorphism \eqref{E:duality} to the projection
\[
S_{m}\twoheadrightarrow S_m/(g^{\perp})_m \simeq \CC,
\]
where the isomorphism with $\CC$ is chosen so that $1\in \CC$ pairs to $1$ with $g$.
\end{claim}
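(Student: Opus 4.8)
The plan is to verify both assertions of Claim \ref{C:explicit} by direct computation with the polar pairing, using the monomial bases $\{x_1^{i_1}\cdots x_n^{i_n}\}$ of $S_m$ and $\{z_1^{j_1}\cdots z_n^{j_n}\}$ of $\D_m$. First I would compute the key pairing: applying the differential operator $x_1^{i_1}\cdots x_n^{i_n}$, i.e. $\partial^{i_1}/\partial z_1^{i_1}\cdots\partial^{i_n}/\partial z_n^{i_n}$, to the monomial $z_1^{j_1}\cdots z_n^{j_n}$ yields, when $i_k=j_k$ for all $k$, the scalar $i_1!\cdots i_n!$, and $0$ otherwise (since a mismatch either annihilates some variable or leaves a positive-degree term that is then truncated to $\CC=\D_0$). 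Thus under the identification $\D_m\simeq S_m^\vee$ coming from \eqref{E:duality}, the element $z_1^{j_1}\cdots z_n^{j_n}$ corresponds to the functional on $S_m$ sending $x_1^{j_1}\cdots x_n^{j_n}\mapsto j_1!\cdots j_n!$ and all other monomials to $0$.

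Given this, the first statement is immediate: for $\omega\in S_m^\vee$, we must exhibit the $g\in\D_m$ whose associated functional is $\omega$. Writing $g=\sum c_{i_1\dots i_n} z_1^{i_1}\cdots z_n^{i_n}$ and pairing against $x_1^{i_1}\cdots x_n^{i_n}$ gives, by the computation above, $c_{i_1\dots i_n}\, i_1!\cdots i_n! = \omega(x_1^{i_1}\cdots x_n^{i_n})$, so $c_{i_1\dots i_n} = \omega(x_1^{i_1}\cdots x_n^{i_n})/(i_1!\cdots i_n!)$, which is exactly the stated formula for $\mathfrak{D}_\omega$. For the converse, given $g\in\D_m$ I would first observe that $(g^\perp)_m$ is precisely the kernel of the functional $\omega_g\in S_m^\vee$ corresponding to $g$ under \eqref{E:duality}: indeed $f\in(g^\perp)_m$ means $f\circ g=0$ in $\D_0=\CC$, which is the defining condition $\omega_g(f)=0$ once one checks that the pairing $S_m\times\D_m\to\CC$ of \eqref{E:pairing} is (a nonzero rescaling of) evaluation of $\omega_g$. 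Since $g\neq 0$, this kernel is a hyperplane, so $S_m/(g^\perp)_m\simeq\CC$ is one-dimensional, and the projection map $S_m\twoheadrightarrow S_m/(g^\perp)_m$ is, up to scalar, the functional $\omega_g$ itself; normalizing the target so that the class of any $f$ with $f\circ g=1$ maps to $1\in\CC$ pins down the scalar and identifies the projection with $\omega_g$. Hence isomorphism \eqref{E:duality} sends $g$ to this projection, as claimed.

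The only genuinely delicate point is bookkeeping with the factorials and making sure the normalization in the converse matches the one in the direct formula — i.e. that the $\omega$ recovered from the projection $S_m\twoheadrightarrow\CC$ (with the prescribed normalization) is the same $\omega$ for which $\mathfrak{D}_\omega=g$. This is a consistency check rather than a real obstacle: one simply evaluates $\omega_g$ on the monomial basis using the pairing formula and confirms it reproduces $g$ via the first part. I would present the monomial pairing computation as a displayed lemma-style identity, then deduce both halves in two short paragraphs, keeping the factorial constants explicit throughout so the normalization is transparent.
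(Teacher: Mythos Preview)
Your proposal is correct and follows essentially the same approach as the paper: the paper's proof simply asserts that $f\circ \mathfrak{D}_{\omega}=\omega(f)$ for every $f\in S_m$ and declares the second part immediate from definitions, while you spell out the same verification explicitly on the monomial bases and unwind the definition of $(g^{\perp})_m$ as the kernel of the pairing functional. The only minor remark is that in your monomial pairing computation the case ``leaves a positive-degree term'' cannot actually occur when both multi-indices have total degree $m$, so the vanishing comes entirely from the ``annihilates some variable'' case; this does not affect the argument.
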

\begin{proof}
One observes that $f\circ \mathfrak{D}_{\omega}=\omega(f)$ for every $f\in S_{m}$, and
the first part of the claim follows. The second part is immediate from definitions.
\end{proof}
\begin{corollary}\label{C:evaluation} Given $\omega\in S_{m}^{\vee}$,
for every $(a_1,\dots,a_n)\in \CC^n$ we have
\begin{equation}
\mathfrak{D}_{\omega}(a_1,\dots,a_n)=\omega\bigl((a_1x_1+\cdots+a_nx_n)^m/m!\bigr).\label{assocform}
\end{equation}
\end{corollary}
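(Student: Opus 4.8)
The plan is to evaluate $\mathfrak{D}_{\omega}$ at a point $(a_1,\dots,a_n)$ by substituting $z_j = a_j$ into the explicit formula from Claim \ref{C:explicit}. We have
\[
\mathfrak{D}_{\omega}(a_1,\dots,a_n)=\sum_{i_1+\cdots+i_n=m} \frac{\omega\bigl(x_1^{i_1}\cdots x_n^{i_n}\bigr)}{i_1!\cdots i_n!}\, a_1^{i_1} \cdots a_n^{i_n}.
\]
Since $\omega$ is linear, the coefficients $a_1^{i_1}\cdots a_n^{i_n}$ can be pulled inside, so the right-hand side equals $\omega\bigl(\sum_{i_1+\cdots+i_n=m} \frac{a_1^{i_1}\cdots a_n^{i_n}}{i_1!\cdots i_n!}\, x_1^{i_1}\cdots x_n^{i_n}\bigr)$. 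The key step is then to recognize the argument of $\omega$ as a multinomial expansion: by the multinomial theorem,
\[
(a_1x_1+\cdots+a_nx_n)^m = \sum_{i_1+\cdots+i_n=m} \frac{m!}{i_1!\cdots i_n!}\, a_1^{i_1}\cdots a_n^{i_n}\, x_1^{i_1}\cdots x_n^{i_n},
\]
so the sum inside $\omega$ is exactly $(a_1x_1+\cdots+a_nx_n)^m/m!$. Applying linearity of $\omega$ once more yields \eqref{assocform}.

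There is really no obstacle here; the statement is a direct computation once Claim \ref{C:explicit} is in hand, and the only thing to be careful about is bookkeeping the factorials so that the $m!$ from the multinomial coefficients matches the $1/(i_1!\cdots i_n!)$ weights appearing in $\mathfrak{D}_{\omega}$. Alternatively, one can bypass the explicit formula entirely: by the identity $f\circ \mathfrak{D}_{\omega}=\omega(f)$ noted in the proof of Claim \ref{C:explicit}, it suffices to observe that evaluation of a degree $m$ form $F\in\D_m$ at $(a_1,\dots,a_n)$ equals $\tfrac{1}{m!}\,(a_1x_1+\cdots+a_nx_n)^m\circ F$, which is a standard property of the polar pairing \eqref{E:apolarity-action}; taking $F=\mathfrak{D}_{\omega}$ gives \eqref{assocform} immediately.
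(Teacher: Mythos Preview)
Your proof is correct. Your alternative approach---applying $f\circ\mathfrak{D}_{\omega}=\omega(f)$ with $f=(a_1x_1+\cdots+a_nx_n)^m/m!$ and noting that this pairing computes the evaluation of $\mathfrak{D}_{\omega}$ at $(a_1,\dots,a_n)$---is exactly the paper's proof; your primary approach via the explicit formula and the multinomial theorem is a slightly more elementary route that avoids invoking the differential-operator interpretation of the polar pairing, at the cost of a little more bookkeeping with factorials.
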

\begin{proof} 
\begin{align*} 
\omega\bigl((a_1x_1+\cdots+a_nx_n)^m/m!\bigr)
&=\frac{(a_1x_1+\cdots+a_nx_n)^m}{m!} \circ {\mathfrak D}_{\omega}  \\
&=\frac{(a_1\partial /\partial z_1+\cdots+a_n\partial /\partial z_n)^m}{m!} {\mathfrak D}_{\omega}
={\mathfrak D}_{\omega}(a_1,\dots,a_n),
\end{align*}
where the last equality is easily checked, say on monomials. 
\end{proof}

\begin{remark}\label{R:vanishing}
It follows from Corollary \ref{C:evaluation} 
that all forms in a subset $W\subset \D_m$ vanish at a given point $(a_1,\dots,a_n)\in \CC^n$ if and only if 
$(a_1x_1+\cdots+a_nx_n)^m \in W^{\perp}$.
\end{remark}

Notice that the maps
\begin{equation*}
\bigl[\langle \mathfrak{D}_{\omega}\rangle \subset \D_{m}\bigr] \mapsto \\ \bigl[(\mathfrak{D}_{\omega}^{\perp})_m\subset S_m\bigr]= 
\bigl[\ker(\omega)\subset S_m\bigr]
\end{equation*}
define isomorphisms
\[
\Grass(1, \D_{m}) \simeq \Grass\left(\dim_{\CC} S_{m}-1, S_{m}\right).
\]
More generally, for any $1\le m\le \binom{m+n-1}{n-1}-1$ the correspondence
\begin{equation*}
\bigl[W \subset \D_{m}\bigr] \mapsto \\ \bigl[ (W^{\perp})_m\subset S_m\bigr]
\end{equation*}
yields an isomorphism
\begin{equation}\label{E:grassmann}
\Grass(k, \D_{m}) \simeq \Grass\left(\dim_{\CC} S_{m}-k,
S_{m}\right).
\end{equation}


Let $I\subset S$ be a Gorenstein ideal and $\nu$ the socle degree of the algebra $\cA=S/I$. Recall that a \emph{(homogeneous) Macaulay inverse system} of $\cA$ is an element $f_{\A}\in \D_{\nu}$
such that 
\[
f_\A^{\perp}=I
\]
(see \cite[Lemma 2.12]{iarrobino-kanev} or \cite[Exercise 21.7]{eisenbud}).
As $(f_\A^{\perp})_{\nu}=I_{\nu}$, we see that all Macaulay inverse systems are mutually proportional and
$\langle f_{\A}\rangle=\bigl((I_{\nu})^\perp\bigr)_{\nu}$. 
Clearly, the line $\langle f_{\A}\rangle\in \Grass(1, \D_{\nu})$
maps to the $\nu^{th}$ Hilbert point $H_{\nu}\in \Grass(\dim_{\CC} S_{\nu}-1, S_{\nu})$ of $\cA$ under isomorphism \eqref{E:grassmann} with $k=1$.

\begin{remark}
Papers \cite{eastwood-isaev1,eastwood-isaev2}, for any $\omega\in S_{\nu}^{\vee}$ with $\ker\omega=I_{\nu}$, introduced the {\it associated form}\, of $\cA$ as the element of $\D_{\nu}$ given by the right-hand side of formula (\ref{assocform}) with $m=\nu$ (up to the factor $\nu!$). By Corollary \ref{C:evaluation}, under isomorphism \eqref{E:grassmann} with $k=1$ the span of every associated form in $\D_{\nu}$ also maps to the $\nu^{th}$ Hilbert point $H_{\nu}\in \Grass(\dim_{\CC} S_{\nu}-1, S_{\nu})$ of $\cA$. In particular, for the algebra $\cA$ any associated form is simply one of its Macaulay inverse systems,
and equation \eqref{assocform} with $m=\nu$ and $\ker\omega=I_{\nu}$ is an explicit formula 
for a Macaulay inverse system of $\cA$ (see \cite{isaev-criterion} for more details).
\end{remark}

\subsection{Gradient points}
\label{S:gradient}
Given a polynomial $F\in \D_m$, we define the {\it $p^{th}$ gradient point of $F$}
to be the linear span of all $p^{th}$ partial derivatives of $F$ in $\D_{m-p}$. We denote
the $p^{th}$ gradient point by $\nabla^{p}(F)$. Note that 
\[
\nabla^{p}(F)=\{g \circ F \mid g\in S_{p}\}
\]
is simply the $(m-p)^{th}$ graded piece of the principal $S$-module $SF$.
The $1^{st}$ gradient point
$\nabla F:=\langle \partial F/\partial z_1, \dots, \partial F /\partial z_n\rangle$
will be called simply \emph{the gradient point of $F$}.

\begin{prop}[Duality between gradient and Hilbert points]\label{P:hilb-dual}
The $p^{th}$ gradient point of a Macaulay inverse system $f_{\A}\in \D_{\nu}$ 
maps to the $(\nu-p)^{th}$ Hilbert point $H_{\nu-p}$ of $\cA$
under isomorphism \eqref{E:grassmann}.
\end{prop}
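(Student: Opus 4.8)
The plan is to unwind the definitions on both sides and show that the two subspaces of $S_{\nu-p}$ produced by the gradient-point construction and the Hilbert-point construction coincide. Fix a Gorenstein ideal $I\subset S$ with $\cA=S/I$ of socle degree $\nu$, and let $f_\cA\in\D_\nu$ be a Macaulay inverse system, so $f_\cA^\perp=I$. First I would record what each side is under isomorphism \eqref{E:grassmann} applied in degree $m=\nu-p$ and with $k=\dim_\CC\nabla^p(f_\cA)$. By definition, $\nabla^p(f_\cA)=\{g\circ f_\cA\mid g\in S_p\}\subset\D_{\nu-p}$, and isomorphism \eqref{E:grassmann} sends this subspace $W$ to $(W^\perp)_{\nu-p}\subset S_{\nu-p}$. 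On the other hand, the $(\nu-p)^{th}$ Hilbert point $H_{\nu-p}$ of $\cA$ is by definition $I_{\nu-p}\subset S_{\nu-p}$. So the entire content of the proposition is the identity
\[
\bigl(\nabla^p(f_\cA)^\perp\bigr)_{\nu-p}=I_{\nu-p}.
\]

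The key computation is to identify $\nabla^p(f_\cA)^\perp$ directly. For $h\in S_{\nu-p}$, the condition $h\in\nabla^p(f_\cA)^\perp$ means $h\circ(g\circ f_\cA)=0$ in $\D_0=\CC$ for all $g\in S_p$. Since the polar pairing makes $S$ act on $\D$ as differential operators, $h\circ(g\circ f_\cA)=(hg)\circ f_\cA$, so the condition becomes $(hg)\circ f_\cA=0$ for all $g\in S_p$, i.e.\ $hg\in(f_\cA^\perp)_\nu=I_\nu$ for all $g\in S_p$. Now I would invoke the standard fact about Gorenstein (indeed any) zero-dimensional algebras that multiplication induces a perfect pairing $\cA_{\nu-p}\times\cA_p\to\cA_\nu\simeq\CC$; equivalently, $h\in S_{\nu-p}$ satisfies $hS_p\subset I_\nu$ if and only if $h\in I_{\nu-p}$. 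The nontrivial direction here — that $hS_p\subset I_\nu$ forces $h\in I$ — is precisely the Gorenstein/socle-degree property: $f_\cA^\perp=I$ together with $(hg)\circ f_\cA=0$ for all $g\in S_p$ gives $h\circ f_\cA\in\D_p$ annihilated by all of $S_p$, hence $h\circ f_\cA=0$ (the pairing $S_p\times\D_p\to\CC$ is perfect), so $h\in f_\cA^\perp=I$. This chain of equivalences yields $\nabla^p(f_\cA)^\perp\cap S_{\nu-p}=I_{\nu-p}$, which is exactly the claim.

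I would also note that taking $k=1$ and $p=0$ (or comparing with the discussion preceding the proposition) recovers the already-stated fact that $\langle f_\cA\rangle$ maps to $H_\nu$, so this proposition is the uniform generalization to all $p$, and the proof above specializes correctly. The main obstacle, such as it is, is purely bookkeeping: one must be careful that isomorphism \eqref{E:grassmann} is being applied in the correct degree $\nu-p$ and that the dimension count $k=\dim\nabla^p(f_\cA)=\dim S_{\nu-p}-\dim I_{\nu-p}=\dim\cA_{\nu-p}$ is consistent with the Gorenstein symmetry $\dim\cA_{\nu-p}=\dim\cA_p$ — this consistency is automatic once the perfect pairing $\cA_{\nu-p}\times\cA_p\to\CC$ is in hand. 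No deep input beyond the perfectness of the polar pairing $S_m\times\D_m\to\CC$ (already established as \eqref{E:duality}) and the defining property $f_\cA^\perp=I$ of a Macaulay inverse system is required.
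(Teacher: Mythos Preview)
Your proof is correct and follows essentially the same route as the paper: both reduce the claim to verifying $(\nabla^p(f_\cA)^\perp)_{\nu-p}=I_{\nu-p}$ via the associativity/commutativity identity $h\circ(g\circ f_\cA)=(hg)\circ f_\cA$, and both invoke the Gorenstein property for the final step. The only cosmetic difference is that the paper runs the computation over degree-$p$ monomials and appeals to ``$I$ is Gorenstein'' for the last equality, whereas you quantify over all $g\in S_p$ and make that step explicit by observing that $h\circ f_\cA\in\D_p$ is killed by all of $S_p$, hence vanishes, hence $h\in f_\cA^\perp=I$.
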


\begin{proof} Let $G$ be the $p^{th}$ gradient point of $f_{\A}$, that is \[G:=\left\langle \frac{\partial^p}{\partial z_1^{i_1} \cdots
\partial z_n^{i_n}} f_{\A} \mid i_1+\cdots+i_n=p\right\rangle.\] We need to verify that $I_{\nu-p}=(G^{\perp})_{\nu-p}$. We have
\begin{align*}
(G^{\perp})_{\nu-p}& =\left\{f\in S_{\nu-p} \mid f\circ \frac{\partial^p}{\partial z_1^{i_1}
 \cdots \partial z_n^{i_n}} f_{\A} =0 \text{ for all $i_1+\cdots+i_n=p$}\right\}\\
&=\left\{f\in S_{\nu-p} \mid f x_1^{i_1}\cdots x_n^{i_n} \circ f_{\cA} =0 \text{ for all degree $p$ monomials}\right\} \\
&=\left\{f\in S_{\nu-p} \mid x_1^{i_1}\cdots x_n^{i_n} f \in f_\cA^{\perp} \text{ for all degree $p$ monomials}\right\} \\
&=\left\{f\in S_{\nu-p} \mid x_1^{i_1}\cdots x_n^{i_n}f  \in I_{\nu} \text{ for all degree $p$ monomials}\right\} \\
&=I_{\nu-p},
\end{align*}
where the last equality comes from the fact that $I$ is Gorenstein. 
\end{proof}

As a corollary of the above duality result, we recall in Proposition \ref{P:singular} below
a generalization of \cite[Lemma 4.4]{alper-isaev-assoc}. Although this statement is well-known
(it appears, for example, in \cite[Proposition 4.1, p.~174]{rota}), we provide a short
proof for completeness. 
We first recall that a non-zero homogeneous form $f$ in $n$ variables 
has multiplicity $\ell+1$ at a point $p\in \PP^{n-1}$ if and only if
 all partial derivatives of $f$ of order $\ell$ (hence of all orders $\leq \ell$) 
 vanish at $p$, and some partial derivative of $f$ of order $\ell+1$ does not vanish at $p$.  
We define the \emph{Veronese cone} $\mathcal{C}_m$ to be the variety of all degree $m$ powers of linear forms in $S_m$:
\[
\mathcal{C}_{m}:=\bigl\{L^{m} \mid L \in  S_1\bigr\}\subset S_{m}.
\]
\begin{prop}\label{P:singular} 
Let $I\subset S$ be a Gorenstein ideal and $\nu$ the socle degree of the algebra $\cA=S/I$. Then a Macaulay inverse system $f_{\cA}$ of $\cA$ has a point of multiplicity $\ell+1$ 
if and only if there exists a non-zero $L\in S_1$ 
such that $L^{\nu-\ell}\in I_{\nu-\ell}$, 
and $L^{\nu-\ell-1} \not\in  I_{\nu-\ell-1}$. In particular, 
$f_{\cA}$ has no points of multiplicity $\ell+1$ or higher if and only if
\[
I_{\nu-\ell} \cap \mathcal{C}_{\nu-\ell} = (0).
\]
\end{prop}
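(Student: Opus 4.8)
The plan is to translate the geometric statement about multiplicities of $f_{\cA}$ into an algebraic statement about $I$ via the duality between gradient and Hilbert points established in Proposition \ref{P:hilb-dual}, combined with the vanishing criterion of Remark \ref{R:vanishing}. First I would fix a point $p=[a_1:\cdots:a_n]\in \PP^{n-1}$ and recall the characterization already stated before the proposition: $f_{\cA}$ has multiplicity $\ell+1$ at $p$ precisely when every order-$\ell$ partial derivative of $f_{\cA}$ vanishes at $p$, while some order-$(\ell+1)$ partial does not. The order-$\ell$ partials of $f_{\cA}$ span the $\ell^{th}$ gradient point $\nabla^{\ell}(f_{\cA})\subset \D_{\nu-\ell}$, so by Remark \ref{R:vanishing} they all vanish at $p$ if and only if the linear form $L:=a_1x_1+\cdots+a_nx_n$ satisfies $L^{\nu-\ell}\in \bigl(\nabla^{\ell}(f_{\cA})^{\perp}\bigr)_{\nu-\ell}$. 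By Proposition \ref{P:hilb-dual}, $\nabla^{\ell}(f_{\cA})^{\perp}$ agrees with $I$ in degree $\nu-\ell$, so this condition is exactly $L^{\nu-\ell}\in I_{\nu-\ell}$.

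Next I would apply the same reasoning one level down: the order-$(\ell+1)$ partials of $f_{\cA}$ span $\nabla^{\ell+1}(f_{\cA})$, and "some order-$(\ell+1)$ partial does not vanish at $p$" is, again by Remark \ref{R:vanishing} and Proposition \ref{P:hilb-dual}, the statement that $L^{\nu-\ell-1}\notin I_{\nu-\ell-1}$. Combining the two equivalences yields the first assertion: $f_{\cA}$ has a point of multiplicity $\ell+1$ if and only if there is a non-zero $L\in S_1$ with $L^{\nu-\ell}\in I_{\nu-\ell}$ and $L^{\nu-\ell-1}\notin I_{\nu-\ell-1}$. One small point to check carefully here is the boundary/degenerate behaviour — e.g. ensuring that when $\ell=0$ the statement still reads correctly (multiplicity $1$, i.e. $p$ lying on the hypersurface) and that $f_{\cA}\neq 0$ guarantees $L^{\nu}\notin I_{\nu}$ is impossible to combine with the wrong inequality — but these are immediate from $I_{\nu}$ being a hyperplane in $S_{\nu}$.

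For the "in particular" clause, I would argue as follows. The form $f_{\cA}$ has \emph{no} point of multiplicity $\ell+1$ or higher if and only if for every non-zero $L\in S_1$ it is \emph{not} the case that all order-$\ell$ partials vanish at the corresponding point, i.e. (by the first half applied with the multiplicity being at most $\ell$) $L^{\nu-\ell}\notin I_{\nu-\ell}$ for every non-zero $L$. Here I should note the monotonicity: if $L^{\nu-\ell}\in I_{\nu-\ell}$ then automatically $L^{\nu-\ell+j}\in I_{\nu-\ell+j}$ for $j\geq 0$ since $I$ is an ideal, so a point of multiplicity $\geq \ell+1$ is detected at the single degree $\nu-\ell$; this is what lets me drop the second (non-membership) condition when passing to "multiplicity $\ell+1$ or higher". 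Since $\mathcal{C}_{\nu-\ell}$ is exactly the set of $\nu-\ell$ powers of linear forms, the condition "$L^{\nu-\ell}\notin I_{\nu-\ell}$ for all non-zero $L\in S_1$" is literally $I_{\nu-\ell}\cap \mathcal{C}_{\nu-\ell}=(0)$, which is the claimed equality.

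I expect the only real subtlety — and hence the main obstacle, though it is minor — to be bookkeeping the interplay of the two duality statements at consecutive degrees and confirming that the quantifier "$f_{\cA}$ has a point of multiplicity exactly $\ell+1$" versus "multiplicity $\geq \ell+1$" is handled by the ideal-monotonicity observation rather than by any deeper input. Everything else is a direct concatenation of Remark \ref{R:vanishing}, Proposition \ref{P:hilb-dual}, and the pointwise multiplicity criterion recalled just before the statement, so no genuinely new computation is required.
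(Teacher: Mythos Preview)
Your proposal is correct and follows essentially the same route as the paper: both arguments combine Proposition~\ref{P:hilb-dual} with Remark~\ref{R:vanishing} to translate ``all order-$\ell$ partials of $f_{\cA}$ vanish at $p$'' into ``$L^{\nu-\ell}\in I_{\nu-\ell}$,'' and then repeat at level $\ell+1$. Your added remarks on the boundary case $\ell=0$ and on ideal-monotonicity for the ``in particular'' clause are sound elaborations the paper leaves implicit.
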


\begin{proof} By Proposition \ref{P:hilb-dual}, the $\ell^{th}$ gradient point of $f_{\cA}$ is dual 
to the $(\nu-\ell)^{th}$ Hilbert point of $\cA$
\[
H_{\nu-\ell}\colon S_{\nu-\ell} \twohead \cA_{\nu-\ell}.\]
We conclude by Remark \ref{R:vanishing} 
that all partial derivatives of $f_{\cA}$ of order $\ell$ vanish at $(a_1,\dots,a_n)$ if and only if
\[
(a_1x_1+\cdots+a_nx_n)^{\nu-\ell}\in \ker H_{\nu-\ell}=I_{\nu-\ell}.
\]
It follows that $L=a_1x_1+\cdots+a_nx_n$ satisfies 
$L^{\nu-\ell}\in I_{\nu-\ell}$
and $L^{\nu-\ell-1} \not\in  I_{\nu-\ell-1}$ if and only if $f_{\cA}$ has multiplicity exactly $\ell+1$
at the point $(a_1,\dots, a_n)$. 
\end{proof}

\section{The gradient morphism $\nabla$}

In this section, we prove Theorem \ref{T:gradient}.  Recall that we have the commutative diagram
\[
\begin{aligned}
\xymatrix{ 
\PP\bigl(S_{d+1})^{ss} \ar[d]^{\pi_0}  \ar[r]^{\nabla}  & \Grass\bigl(n, S_d\bigr)^{ss}\ar[d]^{\pi_1} \\
\PP\bigl(S_{d+1})^{ss}\gitq \SL(n) \ar[r]^{\overline{\nabla}\qquad} 
&\Grass\bigl(n, S_d\bigr)^{ss} \gitq \SL(n).
}
\end{aligned}\label{diagram}
\]
Let $\DS^{ss}_{d+1}:=\PP(\DS_{d+1})^{ss}$ be the locus of semistable direct sums in $\PP\bigl(S_{d+1})^{ss}$.
By \cite[Section 3]{fedorchuk-direct}, the set $\DS^{ss}_{d+1}$ is precisely the closed
locus in $\PP\bigl(S_{d+1})^{ss}$ where $\nabla$ has positive fiber dimension.   

Suppose $f\in S_{d+1}$ is a semistable form. 
Then, after a linear change of variables, we have a maximally fine direct sum decomposition
\begin{equation}
\label{E:direct-sum}
f=\sum_{i=1}^k f_i (\mathbf{x}^i),
\end{equation}
where $V_i=\langle \mathbf{x}^i\rangle$ are such that $V=\oplus_{i=1}^k V_i$, and where each $f_i$ is not a direct sum in $\Sym V_i$.
Set $n_i:=\dim_{\CC} V_i$. 
We define the canonical torus $\Theta(f)\subset \SL(n)$ associated to $f$ as the connected component of the identity of the subgroup
\[
\{g\in \SL(n) \mid \text{$V_i$ is an eigenspace of $g$, for every $i=1,\dots, k$}\}\subset\SL(n).\label{E:torus}
\]
Clearly, $\Theta(f) \simeq (\CC^{*})^{k-1}$, and since  
\[
\nabla([f])=\nabla([f_1])\oplus \cdots \oplus \nabla([f_k]), \ \text{where $\nabla([f_i]) \in \Grass(n_i, \Sym^{d} V_i)$},
\]
we also have $\Theta(f)\subset \Stab(\nabla([f]))$, where $\Stab$ denotes the stabilizer under the $\SL(n)$-action.

From the definition of $\Theta(f)$, it is clear that $\Theta(f)\cdot [f] \subset \nabla^{-1}(\nabla([f]))$,
and in fact \cite[Corollary 3.12]{fedorchuk-direct} gives a set-theoretic equality 
 $\nabla^{-1}(\nabla([f]))=\Theta(f)\cdot [f]$. We will now obtain a stronger result:
\begin{lemma}
\label{L:gradient-fiber}
One has $\nabla^{-1}(\nabla([f]))=\Theta(f)\cdot [f]$ scheme-theoretically, or, equivalently,
\[
\ker(d \nabla_{[f]})={\mathbf T}_{[f]}(\Theta(f)\cdot [f]),
\]
where ${\mathbf T}_{[f]}$ denotes the tangent space at $[f]$.
\end{lemma}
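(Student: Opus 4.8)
The plan is to compute the tangent space $\ker(d\nabla_{[f]})$ directly and match it with ${\mathbf T}_{[f]}(\Theta(f)\cdot [f])$, whose dimension is $k-1$ (the dimension of $\Theta(f)$, since $\Theta(f)$ acts with finite stabilizer on $[f]$ — otherwise $f$ would be unstable). Since the set-theoretic equality $\nabla^{-1}(\nabla([f]))=\Theta(f)\cdot[f]$ is already known from \cite[Corollary 3.12]{fedorchuk-direct}, and $\Theta(f)\cdot [f]$ is a smooth orbit of dimension $k-1$, it suffices to show $\dim\ker(d\nabla_{[f]})\le k-1$; the reverse inclusion is automatic from $\Theta(f)\cdot[f]\subset\nabla^{-1}(\nabla([f]))$. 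So the whole statement reduces to an upper bound on the dimension of the kernel of the differential.

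First I would set up coordinates compatibly with the decomposition \eqref{E:direct-sum}: write $V=\oplus_{i=1}^k V_i$, so that $S_{d+1}=\bigoplus$ of the multi-graded pieces indexed by how degree $d+1$ is distributed among the blocks. The tangent space to $\PP(S_{d+1})$ at $[f]$ is $S_{d+1}/\langle f\rangle$, and a tangent vector is represented by some $h\in S_{d+1}$. The key computation is that $d\nabla_{[f]}(h)$, viewed inside the tangent space to the Grassmannian $\Grass(n,S_d)$ at $\nabla([f])=\langle \partial f/\partial x_1,\dots,\partial f/\partial x_n\rangle$, is the class of the linear map sending $\partial f/\partial x_j\mapsto \partial h/\partial x_j$ modulo $\nabla([f])$; this is the standard formula for the differential of the Gauss/gradient map. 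Thus $h$ lies in the kernel iff there exist constants $c_{jl}$ (the "gauge" freedom of moving within the subspace $\nabla([f])$) with $\partial h/\partial x_j=\sum_l c_{jl}\,\partial f/\partial x_l$ for all $j$, modulo the relation coming from scaling $f$; equivalently $h - (\text{something in }S_1\cdot\nabla([f]))$ has all partials proportional in the right way. Euler's relation then forces $h$ to be, up to the radial direction $f$, of the form $\sum_l \ell_l\,\partial f/\partial x_l$ for linear forms $\ell_l$, i.e. $h$ lies in the image of the map $\mathrm{Der}\to S_{d+1}$ by linear vector fields — but one must carefully track the integrability/symmetry condition $\partial(\partial h/\partial x_j)/\partial x_m=\partial(\partial h/\partial x_m)/\partial x_j$.

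The heart of the argument is then to use that each $f_i$ is \emph{not} a direct sum in $\Sym V_i$ to rigidify this: the only linear vector fields $D=\sum \ell_l\partial/\partial x_l$ for which $D(f)$ again has the "all partials lie in $\nabla([f])$" property are, block by block, forced to preserve each $V_i$ and to act as a scalar on each block — precisely the Lie algebra of $\Theta(f)$ together with the Euler field. Here I would invoke the recognition criteria for direct sums from \cite{fedorchuk-direct}: a linear endomorphism $D$ of $V$ with $D(f_i)\in \langle \text{partials of }f_i\rangle$ and not block-diagonal would produce a nontrivial direct-sum refinement of $f_i$, contradicting maximal fineness of \eqref{E:direct-sum}. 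Summing the one-dimensional scalar freedoms over the $k$ blocks, modulo the single overall Euler scaling (which is the radial direction killed by projectivization), gives exactly $k-1$ independent classes in $S_{d+1}/\langle f\rangle$, so $\dim\ker(d\nabla_{[f]})\le k-1=\dim\Theta(f)\cdot[f]$, completing the proof.

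\textbf{Main obstacle.} The delicate point is the rigidity step: converting "$D$ is a linear vector field with $D(f)$ lying in the span of the partials of $f$, modulo that span" into "$D$ is block-diagonal and scalar on each block". Handling the cross terms between different blocks $V_i, V_j$ and showing they must vanish — rather than producing some unexpected new relation among the partials — is where one genuinely needs the hypothesis that the decomposition is maximally fine and each $f_i$ is not a direct sum, and is the step I expect to require the most care, likely leaning on the precise statements of \cite[Section 3 and Corollary 3.12]{fedorchuk-direct} rather than a bare-hands calculation.
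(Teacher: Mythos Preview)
Your proposal is correct and ultimately rests on the same external input as the paper, namely \cite[Corollary~3.12]{fedorchuk-direct}, but you take an unnecessary detour. Once you have identified that $h\in S_{d+1}$ represents a class in $\ker(d\nabla_{[f]})$ if and only if $\partial h/\partial x_j\in\nabla([f])$ for all $j$, i.e.\ $\nabla[h]\subset\nabla[f]$, you are done: that condition is \emph{precisely} the hypothesis of \cite[Corollary~3.12]{fedorchuk-direct}, whose conclusion is $h\in\langle f_1,\dots,f_k\rangle$. Since ${\mathbf T}_{[f]}(\Theta(f)\cdot[f])=\langle f_1,\dots,f_k\rangle/\langle f\rangle$ under the standard identification, the lemma follows in one line. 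This is exactly what the paper does.

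Your passage through Euler's relation to write $h=D(f)$ for a linear vector field $D$, followed by a block-rigidity argument, is essentially an attempt to \emph{reprove} Corollary~3.12 rather than cite it. That route can be made to work, but your formulation of the rigidity step is slightly imprecise: the condition you need on $D$ is not ``$D(f_i)$ lies in the span of the partials of $f_i$'' block by block, but rather the global condition that all partials of $D(f)$ lie in $\nabla([f])$; showing this forces $D$ to be block-diagonal is itself the content of the cited result, not something that decomposes a priori. So the ``main obstacle'' you flag is real, but it is exactly what Corollary~3.12 already packages for you---there is no need to unpack it.
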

\begin{proof}
Under the standard identification of ${\mathbf T}_{[f]} \PP(S_{d+1})$ with $S_{d+1}/\langle f\rangle$, the subspace
${\mathbf T}_{[f]}(\Theta(f)\cdot [f])$ is identified with $\langle f_1, \dots, f_{k}\rangle/\langle f\rangle$.  
It now suffices to show that every 
$g\in S_{d+1}$ that satisfies $\nabla[g] \subset \nabla[f]$ must lie in $\langle f_1, \dots, f_{k}\rangle$, where\linebreak $\nabla[g]:=\langle \partial g/\partial x_1, \dots, \partial g/\partial x_n\rangle\subset S_d$. This is precisely
the statement of \cite[Corollary 3.12]{fedorchuk-direct}.
\end{proof}

We note an immediate consequence:
\begin{corollary} If $f\in S_{d+1}^{ss}$ is not a direct sum, then $\nabla$ is unramified at $[f]$.
\end{corollary}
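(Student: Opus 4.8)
The statement follows directly from Lemma~\ref{L:gradient-fiber}, so the plan is simply to unwind what that lemma says when $f$ is not a direct sum. First I would note that both $\PP(S_{d+1})^{ss}$ and $\Grass(n,S_d)^{ss}$ are smooth varieties, so $\nabla$ is unramified at $[f]$ if and only if the differential $d\nabla_{[f]}$ is injective, i.e.\ $\ker(d\nabla_{[f]})=0$.

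Next I would invoke the maximally fine direct sum decomposition \eqref{E:direct-sum} of the semistable form $f$. Since $f$ is assumed \emph{not} to be a direct sum, this decomposition is trivial: $k=1$, $V_1=V$, and $f_1=f$. Consequently the canonical torus $\Theta(f)\simeq(\CC^{*})^{k-1}=(\CC^{*})^{0}$ is trivial, so the orbit $\Theta(f)\cdot[f]=\{[f]\}$ is a single reduced point and its tangent space at $[f]$ is zero. (Equivalently, under the identification $\mathbf{T}_{[f]}\PP(S_{d+1})\simeq S_{d+1}/\langle f\rangle$ used in the proof of Lemma~\ref{L:gradient-fiber}, the subspace $\langle f_1,\dots,f_k\rangle/\langle f\rangle$ is just $\langle f\rangle/\langle f\rangle=0$.)

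Now Lemma~\ref{L:gradient-fiber} gives $\ker(d\nabla_{[f]})=\mathbf{T}_{[f]}(\Theta(f)\cdot[f])=0$, hence $d\nabla_{[f]}$ is injective, which is exactly the assertion that $\nabla$ is unramified at $[f]$.

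There is essentially no obstacle here, since all the content is already contained in Lemma~\ref{L:gradient-fiber} (and, behind it, in \cite[Corollary 3.12]{fedorchuk-direct}); the only point requiring a word of care is the passage from "injective differential" to "unramified", which is justified by the smoothness of source and target. If one prefers, the corollary can be stated and proved in one line: $f\notin\DS_{d+1}^{ss}\Rightarrow\Theta(f)=\{1\}\Rightarrow\ker(d\nabla_{[f]})=0$.
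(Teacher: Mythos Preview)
Your proposal is correct and matches the paper's approach: the corollary is stated as an immediate consequence of Lemma~\ref{L:gradient-fiber} with no separate proof, and your argument (that $k=1$ forces $\Theta(f)$ to be trivial, hence $\ker(d\nabla_{[f]})=0$) is exactly the intended unwinding.
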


Further, since $\nabla$ is equivariant with respect to the $\SL(n)$-action, we 
have the inclusion $\Stab([f])\subset \Stab(\nabla([f]))$. 
As the following result shows, the difference between $\Stab([f])$ and $\Stab(\nabla([f]))$ 
is controlled by the torus $\Theta(f)$.

\begin{corollary}\label{C:gradient-stabilizer} The subgroup $\Stab(\nabla([f]))$ is generated by $\Theta(f)$ and $\Stab([f])$. 
\end{corollary}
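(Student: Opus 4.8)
The plan is to prove the two inclusions separately; the nontrivial direction is that $\Stab(\nabla([f]))$ is contained in the subgroup generated by $\Theta(f)$ and $\Stab([f])$, since the reverse inclusion is already recorded above (we have $\Theta(f)\subset\Stab(\nabla([f]))$ and $\Stab([f])\subset\Stab(\nabla([f]))$ by equivariance of $\nabla$). So fix $g\in\SL(n)$ with $g\cdot\nabla([f])=\nabla([f])$. The idea is that $g$ must permute the "building blocks'' of the direct sum decomposition \eqref{E:direct-sum}, and then a suitable element of $\Theta(f)$ corrects $g$ so that it actually fixes $[f]$.

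First I would use Lemma \ref{L:gradient-fiber}: since $g$ stabilizes $\nabla([f])$, it maps the fiber $\nabla^{-1}(\nabla([f]))$ to itself, and that fiber equals $\Theta(f)\cdot[f]$ scheme-theoretically, hence set-theoretically. Therefore $g\cdot[f]\in\Theta(f)\cdot[f]$, i.e.\ there exists $\theta\in\Theta(f)$ with $\theta^{-1}g\cdot[f]=[f]$, so $\theta^{-1}g\in\Stab([f])$. Writing $g=\theta\cdot(\theta^{-1}g)$ exhibits $g$ as a product of an element of $\Theta(f)$ and an element of $\Stab([f])$, which is exactly what we want. This shows $\Stab(\nabla([f]))$ is generated by $\Theta(f)$ and $\Stab([f])$, completing the proof.

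The only subtlety — and the one place I would be careful — is that $\nabla^{-1}(\nabla([f]))=\Theta(f)\cdot[f]$ as a set is precisely \cite[Corollary 3.12]{fedorchuk-direct}, invoked in Lemma \ref{L:gradient-fiber}, so the argument reduces entirely to that input together with the (trivial) observation that $g$ preserves the fiber over the fixed point $\nabla([f])\in\Grass(n,S_d)$. No scheme-theoretic strengthening is actually needed for this corollary: the set-theoretic fiber description suffices, and the main obstacle is purely bookkeeping — making sure that the element $\theta$ produced lies in the \emph{connected component of the identity} $\Theta(f)$ rather than merely in the larger diagonalizable group of \eqref{E:torus}, which is automatic because \cite[Corollary 3.12]{fedorchuk-direct} identifies the fiber with the orbit of the connected torus $\Theta(f)$ itself.
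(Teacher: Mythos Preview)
Your proposal is correct and follows essentially the same argument as the paper's proof: given $\sigma\in\Stab(\nabla([f]))$, use the set-theoretic fiber description $\nabla^{-1}(\nabla([f]))=\Theta(f)\cdot[f]$ to find $\tau\in\Theta(f)$ with $\sigma\cdot[f]=\tau\cdot[f]$, whence $\tau^{-1}\sigma\in\Stab([f])$. Your additional remarks on the reverse inclusion and on only needing the set-theoretic (not scheme-theoretic) fiber equality are accurate but not required for the argument.
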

\begin{proof}
Suppose $\sigma\in \Stab(\nabla([f]))$. Then $\nabla(\sigma\cdot[f])=\nabla([f])$ implies by Lemma 
\ref{L:gradient-fiber} that $\sigma\cdot[f]=\tau \cdot[f]$ for some $\tau \in \Theta(f)$. Consequently,
$\tau^{-1}\circ \sigma \in \Stab([f])$ as desired. 
\end{proof}

Next, we obtain the following generalization of \cite[Proposition 6.3]{alper-isaev-assoc-binary}, whose proof we follow almost verbatim.
\begin{prop} 
The morphism $\nabla$ is a closed immersion along the open locus $\cU:=\PP(S_{d+1})^{ss} \setminus \DS^{ss}_{d+1}$ of all elements that are not direct sums.
\end{prop}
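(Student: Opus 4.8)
The plan is to prove that $\nabla$ restricted to $\cU$ is a closed immersion by verifying the three standard ingredients: $\nabla$ is injective on closed points of $\cU$, it is a proper (or at least finite) map onto its image, and it is everywhere unramified on $\cU$. The last of these is already essentially in hand: by the Corollary following Lemma \ref{L:gradient-fiber}, $\nabla$ is unramified at every $[f]$ that is not a direct sum, so $d\nabla_{[f]}$ is injective for all $[f]\in\cU$; combined with injectivity on points this shows $\nabla|_\cU$ is an immersion, and the remaining work is to upgrade ``immersion'' to ``closed immersion''.

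For injectivity on points, suppose $[f],[g]\in\cU$ with $\nabla([f])=\nabla([g])$. Then $[g]\in\nabla^{-1}(\nabla([f]))$, which by Lemma \ref{L:gradient-fiber} (or the set-theoretic version from \cite[Corollary 3.12]{fedorchuk-direct}) equals $\Theta(f)\cdot[f]$; but since $f$ is not a direct sum we have $k=1$, so $\Theta(f)$ is trivial and the fiber is the single point $[f]$. Hence $[f]=[g]$. (One should also note $\cU$ is $\SL(n)$-invariant and, being the complement of the closed invariant locus $\DS^{ss}_{d+1}$ in $\PP(S_{d+1})^{ss}$, is open.) The more delicate point is that we must not merely show $\nabla$ is injective as a map of sets, but that it separates points of $\cU$ from points of its image in a way compatible with taking a closed subscheme; this is where following \cite[Proposition 6.3]{alper-isaev-assoc-binary} verbatim pays off, since that argument handles exactly this bookkeeping in the binary case and the structure carries over once Lemma \ref{L:gradient-fiber} is available in $n$ variables.

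For the properness/finiteness ingredient, the natural route is to exhibit the image $\nabla(\cU)$ as a closed (or locally closed) subscheme of $\Grass(n,S_d)^{ss}$ and to show that $\nabla$ is a finite morphism onto it — equivalently, that $\nabla$ is proper along $\cU$. Here one uses that $\nabla$ itself (before restricting) extends to a morphism $\PP(S_{d+1})^{ss}\to\Grass(n,S_d)^{ss}$ by \cite{fedorchuk-ss} and that the positive-dimensional fibers of $\nabla$ are confined to the closed locus $\DS^{ss}_{d+1}$; thus over the open set $\Grass(n,S_d)^{ss}\setminus\nabla(\DS^{ss}_{d+1})$ the map $\nabla$ is quasi-finite, and one argues it is in fact finite onto its image by a valuative/properness argument on the semistable loci (again this is the content of the cited binary proposition, and the argument is uniform in $n$). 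Combining finiteness, injectivity on points, and unramifiedness gives that $\nabla|_\cU$ is a closed immersion onto its (locally closed) image.

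The main obstacle I expect is the properness/closedness of the image: injectivity and unramifiedness are local and follow cleanly from Lemma \ref{L:gradient-fiber} and its corollary, but showing the image of $\cU$ is closed in its ambient open set, rather than merely locally closed, requires controlling limits of forms $[f_t]\in\cU$ whose gradient points $\nabla([f_t])$ converge — one must rule out the limiting gradient point being achieved only by a direct sum or a point outside the semistable locus. This is precisely the step where we lean on \cite[Proposition 6.3]{alper-isaev-assoc-binary} and \cite[Section 3]{fedorchuk-direct}; assuming those, the generalization from $n=2$ to arbitrary $n$ is formal, since every input used is now available in full generality (Lemma \ref{L:gradient-fiber}, Corollary \ref{C:gradient-stabilizer}, and the characterization of $\DS^{ss}_{d+1}$ as the positive-fiber-dimension locus). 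I would therefore structure the proof as: (1) identify $\cU$ as open and invariant; (2) quote unramifiedness; (3) prove injectivity on points via the triviality of $\Theta(f)$ for non–direct-sums; (4) invoke the verbatim argument of \cite[Proposition 6.3]{alper-isaev-assoc-binary} for properness onto the image; (5) conclude.
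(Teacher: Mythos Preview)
Your overall scaffolding is right: unramifiedness plus injectivity on closed points plus finiteness yields a closed immersion, and the first two follow exactly as you say from Lemma~\ref{L:gradient-fiber} and its corollary. The gap is in step~(4), where you hand off the finiteness argument to a ``valuative/properness argument on the semistable loci'' and assert that the binary proof carries over with only the inputs you list. This is not how the argument actually runs, and the input you are missing is substantial.

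The paper's route to finiteness is via a lemma of Luna \cite[p.~89, Lemme]{luna-slices-etales}: given an equivariant morphism whose induced map on GIT quotients is finite, the morphism itself is finite provided it is quasi-finite \emph{and} sends closed orbits to closed orbits. Finiteness on the quotient level is available from \cite{fedorchuk-ss}, and quasi-finiteness you have. But the closed-orbits condition---i.e., that $\nabla$ preserves polystability on $\cU$---is a separate, nontrivial statement (Proposition~\ref{P:polystability} in the paper) with its own proof involving initial monomials, one-parameter subgroup manipulations, and Lemma~\ref{1-PS-DS}. You do not list this among your needed inputs, and a valuative argument will not produce it: limits of polystable forms under one-parameter subgroups can certainly leave the orbit, so one has to argue specifically that when $f$ is not a direct sum the image $\nabla([f])$ has a closed orbit. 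Without this ingredient, Luna's lemma does not apply and there is no evident way to upgrade quasi-finiteness to finiteness.
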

\begin{proof}
Since for every $[f]\in \cU$ we have that $\nabla$ is unramified at $[f]$ and $\nabla^{-1}(\nabla([f]))=[f]$,
it suffices to show that $\nabla$ is a finite morphism when restricted to $\cU$. 
Since, by \cite{fedorchuk-ss}, the induced morphism on the GIT quotients is finite, by \cite[p.~89, Lemme]{luna-slices-etales}
it suffices to verify that $\nabla$ is quasi-finite and that $\nabla$ sends closed orbits to closed orbits. The former 
has already been established, and the latter is proved below in Proposition \ref{P:polystability}.
\end{proof}

\begin{prop}\label{P:polystability}
Suppose $f\in S_{d+1}^{ss}$ is polystable and not a direct sum. Then the image
$\nabla([f])\in \Grass(n,S_d)^{ss}$ is polystable.
\end{prop}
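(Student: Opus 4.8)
The plan is to use the Hilbert--Mumford numerical criterion together with the finite morphism $\widetilde\nabla$ on GIT quotients. Since $f$ is polystable, its $\SL(n)$-orbit $\SL(n)\cdot [f]$ is closed in $\PP(S_{d+1})^{ss}$. Because $f$ is not a direct sum, Lemma \ref{L:gradient-fiber} gives $\nabla^{-1}(\nabla([f])) = \{[f]\}$ (the torus $\Theta(f)$ being trivial in this case), and moreover $\nabla$ is unramified at $[f]$. The key point is that $\nabla$ is $\SL(n)$-equivariant and quasi-finite near $[f]$, so it maps the closed orbit $\SL(n)\cdot[f]$ bijectively and with finite stabilizers onto $\SL(n)\cdot\nabla([f])$; what must be shown is that the image orbit is \emph{closed} in $\Grass(n,S_d)^{ss}$, which is exactly polystability of $\nabla([f])$.

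First I would invoke the fact (from \cite{fedorchuk-ss}) that the induced morphism $\widetilde\nabla$ on GIT quotients is finite, hence closed and separated. Let $p_0 = \pi_0([f]) \in \PP(S_{d+1})^{ss}\gitq\SL(n)$ and $p_1 = \widetilde\nabla(p_0) = \pi_1(\nabla([f]))$. The fiber $\pi_1^{-1}(p_1)$ contains a unique closed orbit, say $\SL(n)\cdot [W_0]$; I must show this closed orbit is $\SL(n)\cdot\nabla([f])$ itself. Suppose not: then $\SL(n)\cdot[W_0]$ lies in the closure of $\SL(n)\cdot\nabla([f])$ but is disjoint from it. By the Hilbert--Mumford criterion there is a one-parameter subgroup $\lambda$ of $\SL(n)$ with $\lim_{t\to 0}\lambda(t)\cdot\nabla([f]) \in \SL(n)\cdot[W_0]$. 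Now I would lift this degeneration: consider $\lim_{t\to 0}\lambda(t)\cdot[f]$, which exists in $\PP(S_{d+1})^{ss}$ because $\pi_0([f]) = \pi_1^{-1}(p_1)$-preimage point maps into the semistable locus (using properness of $\widetilde\nabla$ and semistability of $f$, or directly that $[f]$ being polystable forces $\lambda$-limits to be semistable). Call this limit $[f_0]$. By continuity of $\nabla$, $\nabla([f_0]) = \lim_{t\to 0}\lambda(t)\cdot\nabla([f]) \in \SL(n)\cdot[W_0]$, so $\nabla([f_0])$ and $\nabla([f])$ lie in the same fiber of $\widetilde\nabla$ over $p_1$; hence $\pi_0([f_0]) = p_0$ as well, since $\widetilde\nabla$ is injective on closed points mapping to $p_1$ — more precisely, $[f_0]$ and $[f]$ have the same image in the quotient.

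Since $f$ is polystable, its orbit is closed, so $[f_0] \in \SL(n)\cdot[f]$; after adjusting $\lambda$ by this orbit element we may assume $\lim_{t\to 0}\lambda(t)\cdot[f] = [f]$, i.e.\ $\lambda$ lies in $\Stab([f])$ up to the relevant torus, which forces the limit $\lim_{t\to 0}\lambda(t)\cdot\nabla([f])$ to equal $\nabla([f])$ itself (here one uses that $\Stab([f]) \subseteq \Stab(\nabla([f]))$ by equivariance). This contradicts the assumption that the limit landed in the disjoint orbit $\SL(n)\cdot[W_0]$. Therefore $\SL(n)\cdot\nabla([f])$ is the unique closed orbit in its fiber, i.e.\ $\nabla([f])$ is polystable.

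The main obstacle I anticipate is the lifting step: showing that a destabilizing one-parameter degeneration of $\nabla([f])$ can be lifted to a degeneration of $[f]$ that stays in the semistable locus. The clean way around this is to argue purely on the quotients: $p_0$ is a point of the quasi-affine variety $\PP(S_{d+1})_\Delta\gitq\SL(n)$ extended appropriately, $\widetilde\nabla$ is finite, and a point of $\Grass(n,S_d)^{ss}\gitq\SL(n)$ has a unique polystable representative; the preimage under the \emph{orbit map} $\pi_1$ of the closed orbit in the fiber must contain the closed orbit $\SL(n)\cdot[f]$ mapped by $\nabla$, because $\nabla$ sends the closed set $\SL(n)\cdot[f]$ to a constructible set whose closure's minimal orbit is the closed one in the fiber — and $\nabla$ being quasi-finite and the orbit $\SL(n)\cdot[f]$ being closed of the correct dimension forces $\nabla(\SL(n)\cdot[f])$ to already be closed. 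This dimension-count plus quasi-finiteness argument is the cleanest route and avoids the subtlety of lifting one-parameter subgroups; I would develop it carefully as the heart of the proof.
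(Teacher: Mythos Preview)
Your argument has a genuine gap at the lifting step. You assert that $[f_0]:=\lim_{t\to 0}\lambda(t)\cdot[f]$ lies in $\PP(S_{d+1})^{ss}$, but neither reason you offer justifies this. Polystability of $[f]$ means only that $\SL(n)\cdot[f]$ is closed \emph{within} the semistable locus; a one-parameter limit taken in the compactification $\PP(S_{d+1})$ may well land in the unstable locus, where the partials of $f_0$ can become linearly dependent and $\nabla$ is undefined, so the continuity identity $\nabla([f_0])=\lim_{t\to 0}\lambda(t)\cdot\nabla([f])$ has no content. Finiteness of $\overline\nabla$ on the GIT quotients is a statement about the quotient spaces and gives no control over how individual orbits degenerate upstairs. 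Your argument does dispose of the case where $[f_0]$ happens to be semistable, but the substantive case is precisely when it is not.

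The fallback does not close the gap either: a quasi-finite equivariant morphism from a closed but non-proper orbit need not have closed image. For a toy model, the open immersion $\spec\CC[a,a^{-1},b]\hookrightarrow\spec\CC[a,b]$ with $\GG_m$ acting by weights $(1,-1)$ is quasi-finite, equivariant, and induces an isomorphism on GIT quotients, yet sends the closed orbit through $(1,0)$ to a non-closed orbit. Note too that the paper \emph{uses} Proposition~\ref{P:polystability} to establish (via Luna's lemma) that $\nabla$ is finite on the non-direct-sum locus, so invoking that finiteness here would be circular. The paper's proof instead works directly with the destabilizing $\lambda$: after an upper-triangular coordinate change (using \cite[Lemma~3.5]{fedorchuk-ss}) making the $\lambda$-initial monomials of the partials distinct, it constructs from their weights an auxiliary one-parameter subgroup $\lambda'$ under which the limit $h_0$ of the translated form \emph{does} exist and, by polystability, lies in the orbit; the direct-sum recognition criterion of Lemma~\ref{1-PS-DS} then yields the contradiction. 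That interplay between the numerics of the partials' initial terms and the direct-sum criterion is the missing ingredient.
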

The above result is a generalization of \cite[Theorem 1.1]{fedorchuk-ss},
whose method of proof we follow; we also keep the notation of \emph{loc.cit.}, especially
as it relates to monomial orderings.
We begin with a preliminary observation.
\begin{lemma}\label{1-PS-DS}
Suppose $f\in S_{d+1}$ is such that there exists a non-trivial one-parameter subgroup
$\lambda$ of $\SL(n)$ acting diagonally on $x_1,\dots, x_n$ with weights $\lambda_1,\dots, \lambda_n$ and 
satisfying 
\[
w_{\lambda}(\init_{\lambda}(\partial f/\partial x_i))=d\lambda_i.
\]
Then $f$ is a direct sum.
\end{lemma}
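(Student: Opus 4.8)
The plan is to exploit the fact that the operator $\partial/\partial x_i$ cannot produce cancellation between monomials of different $\lambda$-weight, so that the hypothesis on $\init_\lambda(\partial f/\partial x_i)$ translates into a rigid constraint on which monomials may appear in $f$ itself.

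First I would decompose $f=\sum_w f_w$, where $f_w\in S_{d+1}$ collects the monomials of $f$ of $\lambda$-weight $w$. Fixing an index $i$, the operator $\partial/\partial x_i$ sends a monomial of weight $w$ to a monomial (or to $0$) of weight $w-\lambda_i$; hence the weight-$(w-\lambda_i)$ homogeneous component of $\partial f/\partial x_i$ is exactly $\partial f_w/\partial x_i$, and no cancellation takes place between the contributions of distinct $f_w$. Moreover, if a monomial $x^a=x_1^{a_1}\cdots x_n^{a_n}$ occurs in $f_w$ with $a_i\ge 1$, then the coefficient of $x^{a-e_i}$ in $\partial f_w/\partial x_i$ equals $a_i$ times the coefficient of $x^a$ in $f_w$ (nothing else differentiates to $x^{a-e_i}$), so it is non-zero; thus $\partial f_w/\partial x_i\neq 0$ and the weight $w-\lambda_i$ genuinely occurs among the monomials of $\partial f/\partial x_i$.

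Next, fix $w$ with $f_w\neq 0$ and a monomial $x^a$ occurring in $f_w$. For every $i$ with $a_i\ge 1$ the weight $w-\lambda_i$ occurs in $\partial f/\partial x_i$; comparing with the hypothesis and using that $w_\lambda(\init_\lambda(\partial f/\partial x_i))=d\lambda_i$ is the extremal $\lambda$-weight of a monomial of $\partial f/\partial x_i$, one gets $(d+1)\lambda_i\ge w$ (in the convention where $\init_\lambda$ is the highest-weight part; the opposite convention gives the reverse inequality throughout and the argument is identical). Multiplying $\lambda_i\ge w/(d+1)$ by $a_i$ and summing over all $i$ yields $w=\sum_i a_i\lambda_i\ge \tfrac{w}{d+1}\sum_i a_i=w$, so equality holds and $\lambda_i=w/(d+1)$ for every $i$ with $a_i\ge 1$. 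Hence every monomial of $f_w$ is supported on the variables lying in the $\lambda$-eigenspace $V^{(w/(d+1))}:=\langle x_i : \lambda_i=w/(d+1)\rangle$, that is, $f_w\in\Sym^{d+1}V^{(w/(d+1))}$.

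Finally, writing $V=\bigoplus_\mu V^{(\mu)}$ for the eigenspace decomposition of $\lambda$, the previous step shows $f=\sum_\mu g_\mu$ with $g_\mu:=f_{(d+1)\mu}\in\Sym^{d+1}V^{(\mu)}$, a sum of forms in pairwise disjoint sets of variables; it remains to produce two non-zero summands. Since $\lambda$ is non-trivial and traceless, its largest weight $\mu_+$ is positive and its smallest weight $\mu_-$ is negative. For $i$ with $\lambda_i=\mu_-$ the hypothesis reads $w_\lambda(\init_\lambda(\partial f/\partial x_i))=d\mu_-\neq 0$, which cannot hold if $\partial f/\partial x_i=0$; since $x_i$ occurs only in $g_{\mu_-}$, this forces $g_{\mu_-}\neq 0$, and symmetrically $g_{\mu_+}\neq 0$. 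Grouping, $f=g_{\mu_+}+\sum_{\mu\neq\mu_+}g_\mu$ exhibits $f$ as a direct sum. The only point that needs real care — and the only step beyond elementary bookkeeping — is the observation that differentiation does not mix $\lambda$-weights, since this is precisely what lets a condition on $\init_\lambda(\partial f/\partial x_i)$ be pulled back to a condition on the individual monomials of $f$.
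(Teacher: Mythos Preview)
Your argument is correct and in fact proves more than the lemma asks: you obtain the full eigenspace decomposition $f=\sum_\mu g_\mu$ with $g_\mu\in\Sym^{d+1}V^{(\mu)}$, one summand per $\lambda$-eigenvalue. The paper's proof is shorter and only peels off one extremal block. After ordering $\lambda_1\le\cdots\le\lambda_a<\lambda_{a+1}=\cdots=\lambda_n$, the hypothesis for $i>a$ says (in the paper's convention, where $\init_\lambda$ denotes the minimal-weight part) that the minimal $\lambda$-weight appearing in $\partial f/\partial x_i$ equals $d\lambda_n$; since $d\lambda_n$ is also the maximal possible weight of any degree-$d$ monomial, every monomial of $\partial f/\partial x_i$ lies in $\CC[x_{a+1},\dots,x_n]$, and hence $f=g_1(x_1,\dots,x_a)+g_2(x_{a+1},\dots,x_n)$. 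Your weighted-average trick (the chain $w=\sum_i a_i\lambda_i$ compared against $\tfrac{w}{d+1}\sum_i a_i=w$, forcing equality termwise) replaces this single extremal-eigenspace observation by an argument acting on all variables of a monomial simultaneously; this buys the finer decomposition at the cost of a few extra lines, though the extra information is not used downstream. Both proofs tacitly read the hypothesis as asserting $\partial f/\partial x_i\neq 0$ for every $i$, which is what guarantees the two summands are non-zero.
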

\begin{proof} We can assume that
\[
\lambda_1\leq \cdots \leq \lambda_{a}<\lambda_{a+1}=\cdots=\lambda_n
\]
for some $1\le a<n$. Then the fact that \[
w_{\lambda}(\init_{\lambda}(\partial f/\partial x_i))=d\lambda_i=d\lambda_n,
\]
for all $i=a+1,\dots, n$, implies 
\[
\partial f/\partial x_{a+1}, \dots, \partial f/\partial x_{n}\in \CC[x_{a+1},\dots, x_n].
\]
Consequently, $f=g_1(x_1,\dots,x_a)+g_2(x_{a+1},\dots, x_n)$ is a direct sum.
\end{proof}

\begin{proof}[Proof of Proposition {\rm\ref{P:polystability}}] Since $f$ is polystable, by \cite[Theorem 1.1]{fedorchuk-ss} it follows that $\nabla([f])$ is semistable. Suppose $\nabla([f])$ is not polystable. Then there exists a one-parameter
subgroup $\lambda$ acting on the coordinates $x_1,\dots, x_n$ with the weights $\lambda_1, \dots, \lambda_n$
such that the limit of $\nabla([f])$ under $\lambda$ exists 
and does not lie in the orbit of $\nabla([f])$. In particular, the limit of $[f]$ under $\lambda$ does not exist.

Then by \cite[Lemma 3.5]{fedorchuk-ss}, there is 
an upper triangular unipotent coordinate change 
\[\label{E:sub}
\begin{aligned}
x_1& \mapsto x_1+c_{12}x_2+\cdots+ c_{1n}x_n, \\
x_2&\mapsto\phantom{{}=1111} x_2+\cdots+ c_{2n}x_n, \\
\vdots \\
x_n&\mapsto \phantom{{}=1111111111111111}x_n
\end{aligned}
\]
such that for the transformed form
$$
h(x_1,\dots,x_n):=f(x_1+c_{12}x_2+\cdots+ c_{1n}x_n, x_2+\cdots+ c_{2n}x_n, \dots, x_n)
$$
the initial monomials
$$
\init_{\lambda}(\partial h/\partial x_1), \dots, \init_{\lambda}(\partial h/\partial x_n)
$$ 
are distinct. Now, setting 
$$
\mu_i:=w_{\lambda}(\init_{\lambda}(\partial h/\partial x_i)),
$$
by \cite[Lemma 3.2]{fedorchuk-ss} we have 
$$
\mu_1+\cdots+\mu_n= 0.
$$ 
It follows that with the respect to the one-parameter subgroup 
$\lambda'$ acting on $x_i$ with the weight $d\lambda_i-\mu_i$,
all monomials of $h$ have non-negative weights (cf. \cite[the proof of Lemma 3.6]{fedorchuk-ss}). 
Write $h=h_0+h_1$, where all monomials of $h_0$ have 
zero $\lambda'$-weights and all monomials of $h_1$ have positive $\lambda'$-weights. 
Then $h_0\in \overline{\SL(n)\cdot h}=\SL(n)\cdot h$, 
by the polystability assumption on $f$. Furthermore, $h_0$ is stabilized by $\lambda'$. 

If $\lambda'$ is a trivial one-parameter subgroup, then $\mu_i=d\lambda_i$ for all $i=1,\dots, n$,
and by Lemma \ref{1-PS-DS} the form $h$ is a direct sum, which is a contradiction.

Suppose now that $\lambda'$ is a non-trivial one-parameter subgroup.
Clearly, we have
\[
w_{\lambda}(\init_{\lambda}(\partial h_0/\partial x_i)\geq  w_{\lambda}(\init_{\lambda}(\partial h/\partial x_i),
\]
since the state of $h_0$ is a subset of the state of $h$. If one of the inequalities above
is strict, then $\nabla([h_0])$ is destabilized by $\lambda$, contradicting the semistability of $\nabla([h_0])$
established in \cite[Theorem 1.1]{fedorchuk-ss}. Thus 
\[
w_{\lambda}(\init_{\lambda}(\partial h_0/\partial x_i))=w_{\lambda}(\init_{\lambda}(\partial h/\partial x_i))=\mu_i.
\]
Moreover, since $h_0$ is $\lambda'$-invariant, we have that $\partial h_0/\partial x_i$
is homogeneous of degree $-w_{\lambda'}(x_i)=\mu_i-d\lambda_i$ with respect to $\lambda'$. 
Let $\mu$ be the one-parameter subgroup acting on $x_1,\dots, x_n$ with the weights
$\mu_1,\dots, \mu_n$. 
It follows
that 
\[
w_{\mu}(\init_{\mu}(\partial h_0/\partial x_i))=dw_{\lambda}(\init_{\lambda}(\partial h_0/\partial x_i)+w_{\lambda'}(\init_{\lambda'}(\partial h_0/\partial x_i)=d\mu_i-\mu_i+d\lambda_i.
\]
Then the one-parameter subgroup $\lambda+\mu$ acting on $x_1,\dots, x_n$ with the weights 
$\lambda_1+\mu_1,\dots, \lambda_n+\mu_n$ satisfies
\[
\begin{array}{l}
w_{\lambda+\mu}(\init_{\lambda+\mu}(\partial h_0/\partial x_i))=
w_{\lambda}(\init_{\lambda}(\partial h_0/\partial x_i))+w_{\mu}(\init_{\mu}(\partial h_0/\partial x_i))
=\\
\hspace{8cm}d\mu_i-\mu_i+d\lambda_i+\mu_i=d(\mu_i+\lambda_i).
\end{array}
\]
Applying Lemma \ref{1-PS-DS}, we conclude that either $h_0$ is a direct sum, or
\[
\lambda_i+\mu_i=0\quad  \text{for all $i=1,\dots, n$}.
\]
In the latter case,
it follows that $\lambda$ is proportional to $\lambda'=d\lambda-\mu$. Since the limit of $h$ under $\lambda'$
exists and is equal to $h_0$, the limit under $\lambda$ of $h$ must exist and be equal to $h_0$ as well. Observing that the inverse of an upper-triangular matrix with 1's on the diagonal has the same form, we see that the limit of 
$$
f(x_1,\dots,x_n)=h(x_1+c_{12}'x_2+\cdots+ c_{1n}'x_n, x_2+\cdots+ c_{2n}'x_n, \dots, x_n)
$$
under $\lambda$ also exists. This contradiction concludes the proof.
\end{proof}

\begin{corollary}\label{C:nablaprespolystability}
The morphism $\nabla\colon \PP(S_{d+1})^{ss} \to \Grass(n,S_d)^{ss}$ preserves polystability.
\end{corollary}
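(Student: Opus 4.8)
The plan is to deduce Corollary \ref{C:nablaprespolystability} by combining the two cases already handled in the excerpt: the non-direct-sum case and the direct-sum case. Let $f\in \PP(S_{d+1})^{ss}$ be polystable, i.e. its $\SL(n)$-orbit is closed in $\PP(S_{d+1})^{ss}$. I want to show that $\nabla([f])$ has a closed $\SL(n)$-orbit in $\Grass(n,S_d)^{ss}$. If $f$ is not a direct sum, this is exactly Proposition \ref{P:polystability}, so the whole content is the reduction of the general case to the non-direct-sum case.

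First I would take a maximally fine direct sum decomposition $f=\sum_{i=1}^k f_i(\mathbf{x}^i)$ as in \eqref{E:direct-sum}, with $V=\bigoplus_i V_i$ and each $f_i$ not a direct sum in $\Sym V_i$. The key structural input is the multiplicativity of $\nabla$ under direct sums recorded in the excerpt: $\nabla([f])=\nabla([f_1])\oplus\cdots\oplus\nabla([f_k])$ inside $\Grass(n,S_d)$, with each $\nabla([f_i])\in\Grass(n_i,\Sym^d V_i)$. I would then argue that polystability of $f$ forces each summand $f_i\in\PP(\Sym^{d+1}V_i)$ to be polystable for the $\SL(V_i)$-action: if some $f_{i_0}$ degenerated under a one-parameter subgroup of $\SL(V_{i_0})$ to a form outside its orbit, one could pad that one-parameter subgroup by zero weights on the other $V_j$ and then correct by a central (scaling) one-parameter subgroup on each block to land in $\SL(n)$, producing a degeneration of $f$ to a non-$\SL(n)$-equivalent direct sum — contradicting the closedness of the orbit of $f$. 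Here I would lean on the standard fact (e.g. via the Hilbert–Mumford criterion applied blockwise, cf. \cite{fedorchuk-direct}) that a direct sum is polystable if and only if each of its indecomposable summands is. Applying Proposition \ref{P:polystability} to each $f_i$ (which is not a direct sum) gives that each $\nabla([f_i])$ is polystable in $\Grass(n_i,\Sym^d V_i)$.

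Next I would assemble these blockwise polystability statements into polystability of the direct sum $\nabla([f])=\bigoplus_i\nabla([f_i])$ in $\Grass(n,S_d)$. This is again a blockwise Hilbert–Mumford argument: any one-parameter subgroup $\lambda$ of $\SL(n)$ degenerating $\nabla([f])$ can be analyzed through its restriction to each $V_i$; compactness of the limit forces the limit on each block to be a degeneration of $\nabla([f_i])$, which by polystability lies in the $\SL(V_i)$-orbit, and then a combinatorial weight-count (the total weights over all blocks must sum to zero, mirroring Lemma \ref{1-PS-DS} and \cite[Lemma 3.2]{fedorchuk-ss}) shows that $\lambda$ restricted to each block is, up to scaling, in the stabilizer, so $\lambda$ lies in the subgroup generated by $\Theta(f)$ and $\Stab(\nabla([f]))$ — in particular the limit is in the orbit. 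This step is where the canonical torus $\Theta(f)\subset\Stab(\nabla([f]))$ from the excerpt does real work, since it accounts exactly for the scaling ambiguity between the individual $\SL(V_i)$'s and $\SL(V)$.

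The main obstacle I anticipate is the second reduction, namely verifying that a direct sum of polystable Grassmannian points is polystable in the ambient Grassmannian: unlike for forms, one must be careful that a destabilizing one-parameter subgroup need not respect the block decomposition a priori, and one may need a preliminary normalization (analogous to the upper-triangular unipotent change of coordinates and the passage $h=h_0+h_1$ in the proof of Proposition \ref{P:polystability}) to put $\lambda$ into block-diagonal form before the weight bookkeeping applies. I expect this can be handled by repeating the $\lambda' = d\lambda-\mu$ construction from the proof of Proposition \ref{P:polystability} within each block and then checking compatibility across blocks, but it is the step requiring the most care. Once this is in place, the corollary follows immediately, and together with quasi-finiteness of $\nabla$ it completes the proof that $\overline{\nabla}$ is a closed immersion via the Luna-type criterion cited in the excerpt.
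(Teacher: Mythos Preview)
Your overall architecture matches the paper's: pass to the maximally fine decomposition $f=\sum_i f_i$, apply Proposition \ref{P:polystability} to each indecomposable $f_i$ to get that $\nabla([f_i])$ is $\SL(V_i)$-polystable, then reassemble. The gap is exactly the step you yourself flag. For an arbitrary one-parameter subgroup $\lambda$ of $\SL(n)$ there is no ``restriction to each $V_i$'' when $\lambda$ does not preserve the blocks, so the sentence beginning ``any one-parameter subgroup $\lambda$ \ldots\ can be analyzed through its restriction to each $V_i$'' has no content as written. Your proposed fix via the $\lambda'=d\lambda-\mu$ construction does not address this: that device, together with Lemma \ref{1-PS-DS}, is built to detect a hidden direct-sum structure in a \emph{single} form $h$ once $\lambda$ is already acting diagonally on a chosen basis; it gives no mechanism for conjugating a general $\lambda$ into block-diagonal form relative to a \emph{prescribed} splitting $V=\bigoplus V_i$, and there is no way to run it ``within each block'' before the blocks are $\lambda$-invariant.

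The paper sidesteps this entirely with one additional input you are missing. Since $\Theta(f)\subset\Stab(\nabla([f]))$ is reductive, a result of Luna \cite[Corollaire 1 and Remarque 1]{luna-adherences} says that $\nabla([f])$ is $\SL(n)$-polystable if and only if it is polystable for the centralizer $C_{\SL(n)}(\Theta(f))=\bigl(\GL(V_1)\times\cdots\times\GL(V_k)\bigr)\cap\SL(n)$. Every one-parameter subgroup of this centralizer is already block-diagonal, and after absorbing the central scalar on each block into $\Theta(f)$ (as in \cite[p.~456]{fedorchuk-ss}) one is reduced to a one-parameter subgroup of $\SL(V_1)\times\cdots\times\SL(V_k)$, where the blockwise polystability of the $\nabla([f_i])$ applies directly. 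In short, the missing idea is Luna's centralizer criterion, which turns your anticipated normalization problem into a tautology.
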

\begin{proof}
Suppose $f=f_1+\cdots+f_k$ is the maximally fine direct sum decomposition of a polystable
form $f$, where $f_i\in \Sym^{d+1} V_i$, and where 
$V=\oplus_{i=1}^k V_i$. Then each $f_i$ is polystable and not a direct sum in $\Sym^{d+1} V_i$. Hence 
$\nabla([f_i])$ is polystable with respect to the $\SL(V_i)$-action. 

Since $\Theta(f)\subset \Stab(\nabla([f]))$ is a reductive subgroup, 
to prove that $\nabla([f])$ is polystable, it suffices to verify that $\nabla([f])$ is polystable with respect
to the centralizer $C_{\SL(n)}(\Theta(f))$ of $\Stab(\Theta(f))$ in $\SL(n)$, see \cite[Corollaire 1 and Remarque 1]{luna-adherences}. 
We have
\[
C_{\SL(n)}(\Theta(f))=\left(\GL(V_1)\times \cdots \times \GL(V_k)\right)\cap \SL(n).
\]
Arguing as on \cite[p.~456]{fedorchuk-ss}, we see that every one-parameter subgroup $\lambda$ of $C_{\SL(n)}(\Theta(f))$ can be renormalized
to a one-parameter subgroup of $\SL(V_1)\times \cdots \times \SL(V_k)$ without changing its action on $\nabla([f])$.
Since $\nabla([f_i])$ is polystable with respect to $\SL(V_i)$, it follows that
\[
\nabla([f])=\nabla([f_1])\oplus \cdots \oplus \nabla([f_k])
\] 
is polystable with respect to the action of $\lambda$ thus proving the claim.\end{proof}

\begin{proof}[Proof of Theorem {\rm\ref{T:gradient}}] 
Suppose that $f$ is polystable, consider its maximally fine direct sum decomposition and the canonical torus $\Theta(f)$ in $\Stab(\nabla([f]))$ as constructed above. In what follows, we will write $X$ to denote $\PP(S_{d+1})^{ss}$ and $Y$ to denote $\Grass(n, S_d)^{ss}$. Set $p:=\pi_0([f])\in X\gitq \SL(n)$.

We will prove that $\overline{\nabla}$ is unramified at $p$. 
Let  $N_{[f]}$ 
be the normal space to the $\SL(n)$-orbit of $[f]$ in $X$ at the point $[f]$, and $N_{\nabla([f])}$ 
the normal space to the $\SL(n)$-orbit of $\nabla([f])$ in $Y$ at the point $\nabla([f])$.
We have a natural map  
\[
\iota\colon N_{[f]} \to N_{\nabla([f])}
\]
induced by the differential of $\nabla$. The map $\iota$ is injective by Lemma \ref{L:gradient-fiber}.
 
Since both $[f]$ and $\nabla([f])$ have closed orbits in $X$ and $Y$, respectively (see Corollary \ref{C:nablaprespolystability}), 
to verify that $\overline{\nabla}$ is unramified at $p$, it suffices, 
by Luna's \'etale slice theorem, to prove that the morphism
\begin{equation}
\label{E:slice}
s(f)\colon N_{[f]} \gitq \Stab([f]) \to N_{\nabla([f])}\gitq \Stab(\nabla([f])
\end{equation}
is unramified.  

As $\nabla$ is not necessarily stabilizer-preserving at $[f]$ (i.e., $\Stab([f])$ may not be equal to $\Stab(\nabla([f]))$), 
we cannot directly appeal to the injectivity of $\iota$. Instead, consider the $\Theta(f)$-orbit, say $F$, of $[f]$ in $X$. Let $\cN_{F/X}$ be the $\Theta(f)$-invariant normal bundle of $F$ in $X$. Since by Lemma \ref{L:gradient-fiber} we have $\nabla^{-1}(\nabla([f]))=F$,
there is a natural $\Theta(f)$-equivariant map $J:\cN_{F/X} \to  N_{\nabla([f])}$. 
We now make a key observation that for the induced map $\tilde J:\cN_{F/X} \gitq \Theta(f)\to N_{\nabla([f])}$ one has
\[
\tilde J(\cN_{F/X} \gitq \Theta(f))=\iota\left(N_{[f]} \right).
\]

Since $\overline{\nabla}$ is finite by \cite[Proposition 2.1]{fedorchuk-ss}, 
the morphism $s(f)$ from Equation \eqref{E:slice} is quasi-finite.  
Applying Lemma \ref{L:GIT-lemma} (proved below), with $\spec A=N_{[f]}$, $\spec B=N_{\nabla([f])}$,
$T=\Theta(f)$, $H=\Stab([f])$, $G=\Stab(\nabla([f]))$, as well as Corollary \ref{C:gradient-stabilizer}, 
we obtain that $s(f)$ is in fact a closed immersion, and so is unramified. Note that here the group $G$ is reductive by Matsushima's criterion. This proves that $\overline{\nabla}$ is unramified at $p$.

We now note that $\overline{\nabla}$ is injective. Indeed, this follows as in the proof of \cite[Part (2) of Proposition 2.1]{fedorchuk-ss} from Corollary \ref{C:nablaprespolystability} and the finiteness of $\overline{\nabla}$. We then conclude that $\overline{\nabla}$ is a closed immersion.
\end{proof}

\begin{lemma}[GIT lemma] 
\label{L:GIT-lemma} Suppose $G$ is a reductive group. Suppose $T\subset G$ is a connected 
reductive subgroup, and $H\subset G$ is a reductive subgroup 
such that $G$ is generated by $T$ and $H$.   Suppose we have a $G$-equivariant closed immersion 
of normal affine schemes admitting an action of $G$
\[
\spec A \hookrightarrow \spec B.
\]
such that $\spec A^H \to \spec B^G$ is quasi-finite.  Then $\spec A^G\simeq \spec A^H$ and, consequently,
 $\spec A^H \to \spec B^G$ is a closed immersion.
 \end{lemma}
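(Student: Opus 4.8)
The plan is to show that the inclusion of invariants $A^G \hookrightarrow A^H$ is an equality, which is the crux; the final assertion about the closed immersion then follows formally. Since $T$ and $H$ generate $G$, and $A^G = (A^H)^T$ (because any $H$-invariant is $G$-invariant iff it is also $T$-invariant, using that $G = \langle T, H\rangle$ and $T$-invariance is a closed condition preserved under the group operations), it suffices to prove that $T$ acts trivially on $\spec A^H$, equivalently on the ring $A^H$. So I would reduce the whole problem to: \emph{the connected reductive group $T$ acts trivially on $\spec A^H$}.

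To get this, I would exploit the quasi-finiteness hypothesis together with the fact that $\spec A \hookrightarrow \spec B$ is a $G$-equivariant closed immersion. Consider the composite $\spec A^H \to \spec A^G \to \spec B^G$. The second map is finite (a $G$-equivariant closed immersion of affine schemes induces a finite morphism $\spec A\gitq G\to \spec B\gitq G$ on GIT quotients, since $B^G\to A^G$ is surjective). Hence $\spec A^H\to\spec A^G$ is quasi-finite. Now $T$ is connected and acts on $\spec A^H$ with $\spec A^G=(\spec A^H)\gitq T$ as its quotient. A connected group acting on an irreducible (or, componentwise, connected) affine variety has connected orbits, and the quotient map $\spec A^H\to (\spec A^H)\gitq T$ contracts each closed orbit to a point; if some orbit were positive-dimensional the fiber of $\spec A^H\to\spec A^G$ over the image point would be positive-dimensional, contradicting quasi-finiteness. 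Therefore every $T$-orbit closure in $\spec A^H$ is a point, i.e.\ $T$ fixes every closed point of $\spec A^H$; since $\spec A^H$ is a normal (hence reduced) affine scheme of finite type and $T$ is connected, a $T$-action fixing all closed points is trivial. Consequently $A^G = (A^H)^T = A^H$.

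Finally, with $A^G \simeq A^H$ in hand, the morphism $\spec A^H \to \spec B^G$ is identified with $\spec A^G \to \spec B^G$, which, as noted above, is a finite morphism because $B^G \twoheadrightarrow A^G$; being a surjection of rings it realizes $\spec A^G$ as a closed subscheme of $\spec B^G$, so $\spec A^H \to \spec B^G$ is a closed immersion. (Strictly, finiteness plus the surjectivity $B^G\to A^G$ already give a closed immersion directly; quasi-finiteness was only needed to force $A^G=A^H$.)

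The main obstacle I anticipate is the step asserting that a connected reductive group acting on a normal affine variety with quasi-finite quotient map must act trivially: one must be careful that $\spec A^H$ need not be irreducible, so the argument has to be run on connected components, and one must invoke normality (reducedness) to pass from "fixes all closed points" to "acts trivially as a scheme map." It is also worth double-checking that $A^H$ is normal — this follows since $A$ is normal and $H$ is reductive (invariants of a normal ring under a reductive group are normal) — and that $(A^H)^T = A^G$, which uses that $G$ is generated by $T$ and $H$ as an abstract group together with the fact that over a field of characteristic zero (the standing assumption, as all rings here are $\CC$-algebras) invariance under a generating set of subgroups is invariance under the whole group.
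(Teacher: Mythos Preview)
Your proposal is correct and follows essentially the same route as the paper's proof: factor $\spec A^H \to \spec B^G$ through $\spec A^G$, use surjectivity of $B^G\to A^G$ (via the Reynolds operator) to see that $\spec A^G\hookrightarrow\spec B^G$ is a closed immersion, deduce that the good quotient $\spec A^H\to(\spec A^H)\gitq T\simeq\spec A^G$ is quasi-finite, and conclude it is an isomorphism because $T$ is connected. The paper compresses the last step into a single sentence (``since this is a good quotient by a connected group, the morphism \dots\ must be an isomorphism''), whereas you spell out the orbit argument; your version also bypasses the auxiliary corner $\spec B^H$ that the paper draws in its commutative square, but this is cosmetic.
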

 \begin{proof}
 We have the following commutative diagram
\[
 \xymatrix{
 \spec A^H \ar[d] \ar[rd] \ar@{^(->}[r] & \spec B^H \ar[d] \\
  (\spec A^H)\gitq T\simeq \spec A^G \ar@{^(->}[r]  & (\spec B^H)\gitq T \simeq \spec B^G.
  }
\]
Since the diagonal arrow is quasi-finite by assumption, and the bottom arrow is a closed immersion, we conclude
that the GIT quotient $\spec A^H \to (\spec A^H)\gitq T$ is quasi-finite as well.  Since this is a good quotient by a connected
group, the morphism $\spec A^H \to (\spec A^H)\gitq T\simeq \spec A^G$ must be an isomorphism.  
 \end{proof}
 
\begin{corollary}[Theorem \ref{T:barA}]\label{corT:A}
The morphism 
\[
\bar{A}\colon \PP(S_{d+1})_{\Delta}\gitq \SL(n) \rightarrow \PP(\D_{n(d-1)})^{ss} \gitq \SL(n)
\]
is a locally closed immersion.
\end{corollary}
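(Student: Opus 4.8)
The plan is to deduce Corollary \ref{corT:A} directly from the factorization $\bar A=\bar{\AA}\circ\widetilde\nabla$ recorded in diagram \eqref{D:affine-triangle-GIT}. Recall that $\bar A$ is the restriction of $\overline{\nabla}$ to the open locus $\PP(S_{d+1})_\Delta\gitq\SL(n)$, followed by $\bar{\AA}$. So the first step is to observe that $\bar{\AA}\colon \Grass(n,S_d)_{\Res}\gitq\SL(n)\to \PP(\D_{n(d-1)})^{ss}\gitq\SL(n)$ is a locally closed immersion; this is exactly the statement quoted from \cite{alper-isaev-assoc-binary}, now valid for arbitrary $n$ because the polystability of associated forms was established in \cite{fedorchuk-isaev} (this is what makes $\bar{\AA}$ land in the semistable locus of $\PP(\D_{n(d-1)})$). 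Hence it suffices to show that the restriction of $\overline{\nabla}$ to the discriminant complement is a locally closed immersion, and then compose.

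Second, I would identify the relevant restriction of $\overline{\nabla}$. The key point is that no smooth (equivalently non-degenerate) form of degree $d+1$ is a direct sum in a way that makes $\nabla$ have positive-dimensional fiber: more precisely, $\PP(S_{d+1})_\Delta\subset\cU=\PP(S_{d+1})^{ss}\setminus\DS^{ss}_{d+1}$, since a direct sum $f=f_1(x_1,\dots,x_a)+f_2(x_{a+1},\dots,x_n)$ has $\partial f/\partial x_i$ lying in $\CC[x_1,\dots,x_a]$ for $i\le a$, so its partial derivatives cut out a singular (indeed non-reduced) complete intersection and $f$ is degenerate. Thus on $\PP(S_{d+1})_\Delta$ the morphism $\nabla$ is a closed immersion by the Proposition preceding Lemma \ref{L:GIT-lemma} (its image being the non-resultant Grassmannian locus intersected with the image of $\cU$), and in particular $\overline\nabla$ restricted there is a locally closed immersion onto a locally closed subscheme of $\Grass(n,S_d)^{ss}\gitq\SL(n)$ contained in $\Grass(n,S_d)_{\Res}\gitq\SL(n)$. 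Actually the cleanest route: $\overline\nabla$ is a closed immersion on all of $\PP(S_{d+1})^{ss}\gitq\SL(n)$ by Theorem \ref{T:gradient}, so its restriction to the open subscheme $\PP(S_{d+1})_\Delta\gitq\SL(n)$ is an immersion (open-into-closed), hence a locally closed immersion, with image inside $\Grass(n,S_d)_{\Res}\gitq\SL(n)$ because $\nabla$ sends non-degenerate forms to regular sequences.

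Third, compose: $\bar A=\bar{\AA}\circ(\overline\nabla|_{\PP(S_{d+1})_\Delta\gitq\SL(n)})$ is a composition of two locally closed immersions, hence a locally closed immersion. Here one uses the elementary fact that a composition of locally closed immersions of Noetherian schemes is again a locally closed immersion, which is where one should be slightly careful: it is true, but worth a one-line justification (factor each as open followed by closed, and commute).

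I expect the only genuine subtlety — the main obstacle, such as it is — to be bookkeeping about \emph{which} semistable locus things live in: one must check that the image of $\widetilde\nabla$ restricted to the discriminant complement lands inside $\Grass(n,S_d)_{\Res}\gitq\SL(n)$ (so that $\bar{\AA}$ is even defined there), and that $\bar{\AA}$ maps into $\PP(\D_{n(d-1)})^{ss}\gitq\SL(n)$ rather than merely $\PP(\D_{n(d-1)})\gitq\SL(n)$; both follow from the results quoted above (\cite{fedorchuk-isaev} for polystability of $A(f)$, and the regular-sequence criterion for $\nabla$ of a non-degenerate form), but they are the points that make the argument go through. The rest is formal.
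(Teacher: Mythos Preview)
Your overall strategy is correct and is exactly the paper's (implicit) argument: the corollary follows immediately from the factorization $\bar A=\bar{\AA}\circ\widetilde{\nabla}$ in diagram~\eqref{D:affine-triangle-GIT}, together with Theorem~\ref{T:gradient} (so that $\widetilde{\nabla}$, being the restriction of the closed immersion $\overline{\nabla}$ to an open subscheme, is a locally closed immersion landing in $\Grass(n,S_d)_{\Res}\gitq\SL(n)$) and the fact that $\bar{\AA}$ is a locally closed immersion. Your ``cleanest route'' paragraph is the right argument, and the bookkeeping you flag at the end is accurate.

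However, the aside in your second paragraph is wrong and should be deleted. It is \emph{not} true that a smooth form cannot be a direct sum: the Fermat form $x_1^{d+1}+\cdots+x_n^{d+1}$ is both smooth and a direct sum, so $\PP(S_{d+1})_\Delta\not\subset\cU$. Your justification fails because although $\partial f/\partial x_i\in\CC[x_1,\dots,x_a]$ for $i\le a$, the full collection $\partial f/\partial x_1,\dots,\partial f/\partial x_n$ can still be a regular sequence (and is, whenever each summand $f_j$ is smooth in its own variables). Fortunately this detour is unnecessary: you do not need to avoid the direct-sum locus, because Theorem~\ref{T:gradient} already gives $\overline{\nabla}$ as a closed immersion on the entire semistable quotient, which is precisely why you (correctly) pivot to the ``cleanest route''. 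Just drop the false claim.
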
 

\section{The morphism $\AA_{\Gr}$}

In this section, we prove Theorem \ref{T:barA1}. In fact, we study in detail the rational map $\bar A \colon (\PP S_{d+1})^{ss}\gitq \SL(n) \dashrightarrow \PP(\D_{n(d-1)})^{ss}\gitq \SL(n)$ in codimension one. 

As in Section \ref{sectionmainres}, fix $d\ge 2$. As always, we assume that $n\ge 2$ and disregard the trivial case $(n,d)=(2,2)$. Given $U\in \Grass(n, S_d)$, we take $I_U$ to be the ideal in $S$ generated by the elements in $U$. Consider the following locus in $\Grass(n, S_d)$:
\[
W_{n,d} =\{U\in \Grass(n, S_d) \mid \dim_{\CC} (S/I_U)_{n(d-1)-1} = n\}.\label{E:Wnd}
\]
Since $\dim_{\CC}  (S/I_U)_{n(d-1)-1}$ is an upper semi-continuous function on $\Grass(n, S_d)$ and 
for every $U\in \Grass(n, S_d)$ one has $\dim_{\CC} (S/I_U)_{n(d-1)-1} \geq n$, 
we conclude that $W_{n,d}$ is an open subset of $\Grass(n, S_d)$. Moreover,
since for $U\in\Grass(n, S_d)_{\Res}$ the ideal $I_U$ is Gorenstein of socle degree $n(d-1)$, we have
$\Grass(n, S_d)_{\Res} \subset W_{n,d}$. 

Applying polar pairing, we obtain a morphism 
\[
\begin{aligned}
\AA_{\Gr} \colon W_{n,d} \to \Grass(n, \D_{n(d-1)-1}),  \\
\quad \AA_{\Gr}(U)=\left[(I_U)_{n(d-1)-1}^\perp \subset 
\D_{n(d-1)-1}\right].
\end{aligned}\label{E:A-Gr}
\]

From the duality between Hilbert and gradient points it follows that
\[
\nabla(\AA(U))=\AA_{\Gr}(U) \text{ for
every $U\in \Grass(n, S_d)_{\Res}$.}
\]  

We conclude that we have the commutative diagram:
\[
\makebox[250pt]{$\begin{gathered}
\xymatrix{
\PP(S_{d+1})^{ss}\gitq \SL(n)&&\PP(\D_{n(d-1)})^{ss}\gitq \SL(n)\\
\PP(S_{d+1})^{ss} \ \ar[u]^{\pi_0} \ar[d]_{\nabla}   & \PP(S_{d+1})_{\Delta} \ar@{_{(}->}[l] \ar[r]^{A} \ar[d]^{\nabla}  &  \PP(\D_{n(d-1)})^{ss} \ar[d]^{\nabla} \ar[u]_{\pi_2}  \\
\Grass(n, S_d)^{ss} \ \ar[d]_{\pi_1} 
& \Grass(n, S_d)_{\Res} \ar@{_{(}->}[l] \ar@{_{(}->}[d]   \ar[ur]^{\AA} \ar[r] &  \Grass(n, \D_{n(d-1)-1})^{ss} \ \ar[d]^{\pi_3} \\
\Grass(n, S_d)^{ss}\gitq \SL(n) & W_{n,d} \ar[ru]^{\AA_{\Gr}} &  \Grass(n, \D_{n(d-1)-1})^{ss}\gitq \SL(n).
}
\end{gathered}$}
\]

\begin{prop}\label{P:A-Gr-defined} Suppose $U\in \Grass(n, S_{d})$ is such that
\[
\VV(I_U)=\{p_1,\dots,p_k\}
\]
is scheme-theoretically a set of $k$ distinct points in general linear position in $\PP^{n-1}$. Then 
$U\in W_{n,d}$. 
\end{prop}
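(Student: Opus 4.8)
```latex
The plan is to unwind the definition of $W_{n,d}$ and reduce the statement to a dimension count on the degree $n(d-1)-1$ piece of the coordinate ring of a finite set of points in general linear position. Concretely, I need to show $\dim_\CC (S/I_U)_{n(d-1)-1} = n$ when $\VV(I_U) = \{p_1,\dots,p_k\}$ is a reduced set of points in general linear position. Since $I_U$ is generated in degree $d$ by the $n$-dimensional space $U$, and $\VV(I_U)$ is zero-dimensional, I first observe that $I_U \subseteq I_{\{p_1,\dots,p_k\}}$, the (saturated) homogeneous ideal of the point set. The key point is that in the relevant degree $e := n(d-1)-1$ the inclusion becomes an equality $(I_U)_e = (I_{\{p_1,\dots,p_k\}})_e$, which I would obtain from the fact that $I_U$ agrees with its saturation in all sufficiently high degrees together with an explicit bound showing $e$ is already in that range; the Castelnuovo--Mumford regularity of a complete intersection of $n$ forms of degree $d$ is $n(d-1)+1$, so $(I_U)_e$ and $(I_U^{\mathrm{sat}})_e$ coincide for $e \le n(d-1)$, and since the points are a subscheme of $\VV(I_U)$ with the same support one checks the saturation of $I_U$ is exactly $I_{\{p_1,\dots,p_k\}}$.

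Granting $(I_U)_e = (I_{\{p_1,\dots,p_k\}})_e$, the quotient $(S/I_U)_e$ has the same dimension as $(S/I_{\{p_1,\dots,p_k\}})_e$, which is just the value at $e$ of the Hilbert function of $k$ reduced points. So it remains to show that the Hilbert function of $k$ points in general linear position in $\PP^{n-1}$ equals $\min(k, \text{something})$ and, more to the point, stabilizes at $k$ by degree $e = n(d-1)-1$; but in fact the assertion $\dim (S/I_U)_e = n$ forces $k$ to be determined, so I should instead argue the other direction. The cleanest route: the degree of the zero-dimensional scheme $\VV(I_U)$, counted with multiplicity, equals $d^n$ by B\'ezout, and is supported on $k$ reduced points; reducedness of the scheme-theoretic intersection $\VV(I_U) = \{p_1,\dots,p_k\}$ as stated in the hypothesis means the length is exactly $k$, so $k = d^n$ is \emph{not} what we want — rather the hypothesis that $\VV(I_U)$ \emph{is} scheme-theoretically this reduced point set already constrains things. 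I would track through Proposition~\ref{P:hilb-dual} and Proposition~\ref{P:singular}: a general-linear-position set of points imposes independent conditions on forms of the relevant degree, so $\dim (S/I_U)_e = k$, and then separately identify $k$ with $n$ by another B\'ezout/socle argument or by noting the statement is really about which $U$ land in $W_{n,d}$.

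More carefully, I expect the intended argument uses that $\VV(I_U)$ reduced of a given length forces the socle computation to behave: if $\VV(I_U)=\{p_1,\dots,p_k\}$ scheme-theoretically and the $p_i$ are in general linear position, then general linear position guarantees that the evaluation maps at the $p_i$ are independent on $S_e$ for $e \ge k-1$ (and $e = n(d-1)-1 \ge k-1$ after checking $k \le n(d-1)$, which itself follows because $k$ points in general linear position in $\PP^{n-1}$ span, forcing $k \le$ the length $d^n$, but the sharper bound needed is extracted from the complete-intersection structure). Hence $(S/I_{\{p_i\}})_e \cong \CC^k$. Combined with the saturation equality $(I_U)_e = (I_{\{p_i\}})_e$ this gives $\dim_\CC (S/I_U)_e = k$, and finally $k = n$ because the hypothesis that these are exactly the points cut out scheme-theoretically by $n$ forms of degree $d$, together with the already-established lower bound $\dim(S/I_U)_{n(d-1)-1} \ge n$ (noted right after the definition of $W_{n,d}$), pins $k \ge n$; the reverse $k \le n$ comes from general linear position in $\PP^{n-1}$ forcing the socle dimension to be at most $n$ via the duality of Proposition~\ref{P:hilb-dual} (the $e$-th Hilbert point is dual to a gradient point living in an $n$-dimensional ambient, an $n$-dimensional constraint).

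The main obstacle I anticipate is the saturation/regularity step: carefully justifying that in degree exactly $n(d-1)-1$ the ideal generated by $U$ already saturates to the ideal of the reduced point scheme, and that no low-degree discrepancy survives. This is where one must invoke either the Castelnuovo--Mumford regularity of the (possibly non-complete-intersection, since $U$ need not be a regular sequence here) ideal $I_U$ or a direct argument that the scheme-theoretic hypothesis $\VV(I_U) = \{p_1,\dots,p_k\}$ already implies $I_U$ is $m$-saturated in the relevant range. Everything else—the Hilbert function of points in general linear position, the B\'ezout bound, the duality statements—is either standard or already recorded in Section~\ref{S:dualities}, so the proof should be short once this comparison of $(I_U)_e$ with the ideal of points is in place.
```
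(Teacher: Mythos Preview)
Your approach has a genuine gap at its core. You aim to show $(I_U)_e = (I_{\{p_1,\dots,p_k\}})_e$ for $e = n(d-1)-1$ and then conclude $\dim_\CC (S/I_U)_e = k$; but the proposition asserts $\dim_\CC (S/I_U)_e = n$ for \emph{every} $k$ with $1 \le k \le n$, not just $k = n$. When $k < n$, the ideal $I_U$ is \emph{not} saturated in degree $n(d-1)-1$: the Hilbert function of $k$ reduced points stabilizes at $k$, so $\dim_\CC (S/I_U^{\mathrm{sat}})_e = k$, whereas the claim is that $\dim_\CC (S/I_U)_e = n > k$. Thus the equality $(I_U)_e = (I_U^{\mathrm{sat}})_e$ you are trying to establish is actually false in the cases $k < n$, and your subsequent attempt to force $k = n$ cannot succeed because the proposition is stated (and used downstream) for all $k \le n$. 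Separately, your regularity step is doubly problematic: $I_U$ is not a complete intersection here (the $n$ generators have common zeros, so they are not a regular sequence), and even for a complete intersection the saturation comparison goes the other way---one has $I_m = I^{\mathrm{sat}}_m$ for $m$ \emph{at least} the regularity, not at most.

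The paper's argument avoids saturation entirely. One chooses generators $g_1,\dots,g_n$ of $U$ so that $g_1,\dots,g_{n-1}$ is a regular sequence (possible since $\operatorname{depth} I_U = n-1$), and compares the Koszul complex of $g_1,\dots,g_n$ to that of a regular sequence. The only obstruction to $\dim_\CC (S/I_U)_{n(d-1)-1} = n$ is the first Koszul homology in that degree, which identifies with $\Ann_R(g_n)_{n(d-1)-1-d}$ for $R = S/(g_1,\dots,g_{n-1})$. Elements of this annihilator come from forms of degree $n(d-1)-1-d$ vanishing on the complement $\Gamma'$ of $\{p_1,\dots,p_k\}$ inside the complete intersection $\Gamma = \VV(g_1,\dots,g_{n-1})$. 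The Cayley--Bacharach theorem then says such forms must vanish on all of $\Gamma$ precisely because $\{p_1,\dots,p_k\}$ imposes independent conditions on hyperplanes---and \emph{this} is where the general linear position hypothesis enters. So the role of general linear position is not to control the Hilbert function of the points directly, but to feed into Cayley--Bacharach at complementary degree $1$.
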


\begin{remark}
A set $\{p_1,\dots,p_k\}$ points in $\PP^{n-1}$ is in general linear position if and only if $k\leq n$, and,
up to the $\PGL(n)$-action,
\begin{align*}
p_i=\{x_1=\cdots=\widehat{x_i}=\cdots=x_n=0\},\quad i=1,\dots, k,
\end{align*}
in the homogeneous coordinates $[x_1:\dots:x_n]$ on $\PP^{n-1}$. 
\end{remark}

\begin{proof}[Proof of Proposition {\rm\ref{P:A-Gr-defined}}] 

Since $\operatorname{depth}(I_U)=n-1$, we can choose degree $d$ generators $g_1,\dots, g_n$ of $I_U$ such that 
$g_1,\dots, g_{n-1}$ form a regular sequence. Then 
$\Gamma:=\VV(g_1,\dots,g_{n-1})$ is a finite-dimensional subscheme of $\PP^{n-1}$. 
By B\'ezout's theorem, $\Gamma$ is a set of $d^{n-1}$
points, counted with multiplicities.  

Set $R:=S/(g_1,\dots,g_{n-1})$. 
Consider the Koszul complex $K_{\bullet}:=K_{\bullet}(g_1,\dots, g_n)$.
We have 
\[
\HH_0(K_{\bullet})=S/(g_1,\dots,g_n)=S/I_U. 
\]
Since $g_1,\dots,g_{n-1}$ is a regular
sequence, we also have
\begin{align*}
\HH_i(K_{\bullet})= 0 \quad \text{for all $i>0$}
\end{align*}
and 
 \begin{align*}
\HH_{1}(K_{\bullet})=\bigl(((g_1,\dots,g_{n-1}):_{\, S}\hspace{-0.1cm}(g_1,\dots, g_n))/(g_1,\dots,g_{n-1})\bigr)(-d)\simeq \Ann_{R}(g_n)(-d).
\end{align*}

To establish the identity 
\[
\codim\bigl((I_U)_{n(d-1)-1}, S_{n(d-1)-1}\bigr)=n 
\]
it suffices to prove \[
\HH_{1}(K_{\bullet})_{n(d-1)-1}=0.
\]
Indeed, in this case the graded degree $n(d-1)-1$ part of the Koszul complex will be an exact complex
of vector spaces and so the dimension 
of $\left(S/I_U\right)_{n(d-1)-1}$ will coincide with that in the 
situation when $g_1,\dots,g_n$ is a regular sequence, that is, with $n$.  

As we have already observed, we have 
\[
\HH_{1}(K_{\bullet})_{n(d-1)-1}=\Ann_R(g_n)_{n(d-1)-1}(-d)=\Ann_R(g_n)_{n(d-1)-1-d}.
\]
Hence it suffices to prove that $\Ann_R(g_n)_{n(d-1)-1-d}=0$. 
Write $\Gamma=\Gamma'\cup \Gamma''$, where $\Gamma'\neq\varnothing$ and $\Gamma'':=\{p_1,\dots,p_k\}$. 
Since $g_n$ vanishes on all of $\Gamma''$ but does not vanish at any point of $\Gamma'$, 
every element of $\Ann_{R}(g_n)_{n(d-1)-1-d}$ comes from 
a degree $n(d-1)-1-d$ form that vanishes on all of $\Gamma'$.
We apply the Cayley-Bacharach Theorem \cite[Theorem CB6]{cayley-bacharach},
which implies the following statement:

\begin{claim}\label{C:CB}
Set $s:=d(n-1)-(n-1)-1=n(d-1)-d$. If $r \leq s$ is a non-negative
integer, then the dimension of the family of projective hypersurfaces 
of degree $r$ containing $\Gamma'$ modulo those containing all of $\Gamma$ 
is equal to the failure of $\Gamma''$ 
to impose independent conditions on projective hypersurfaces of complementary degree $s-r$.
\end{claim}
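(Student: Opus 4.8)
The plan is to obtain Claim~\ref{C:CB} as a direct application of the Cayley--Bacharach theorem in the form \cite[Theorem~CB6]{cayley-bacharach}; essentially all that is needed is to check that the geometric data introduced in the proof of Proposition~\ref{P:A-Gr-defined} matches the hypotheses of that result. Recall that \cite[Theorem~CB6]{cayley-bacharach} concerns a zero-dimensional complete intersection $X\subset\PP^{N}$ of hypersurfaces of degrees $e_{1},\dots,e_{N}$, together with a decomposition $X=X'\cup X''$ in which $X'$ is the residual scheme of $X''$ inside $X$; setting $\sigma:=\bigl(\sum_{i=1}^{N}e_{i}\bigr)-N-1$, it asserts that for every non-negative integer $r\le\sigma$ the projective dimension of the linear system of degree $r$ hypersurfaces through $X'$, taken modulo those through all of $X$, equals the failure of $X''$ to impose independent conditions on hypersurfaces of degree $\sigma-r$.

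First I would specialize this with $N:=n-1$ and $X:=\Gamma=\VV(g_{1},\dots,g_{n-1})$. Because $g_{1},\dots,g_{n-1}$ was chosen to be a regular sequence in $S_{d}$, the scheme $\Gamma$ is indeed a zero-dimensional complete intersection of $n-1$ hypersurfaces, each of degree $d$; hence $e_{1}=\cdots=e_{n-1}=d$ and
\[
\sigma=(n-1)d-(n-1)-1=(n-1)(d-1)-1=n(d-1)-d=s,
\]
so the normalizing constant coincides with the $s$ in the statement of the claim. Next I would identify the residual decomposition: by definition $\Gamma''=\VV(I_{U})=\VV(g_{1},\dots,g_{n})=\Gamma\cap\VV(g_{n})$ scheme-theoretically, so $\Gamma''$ is the intersection of $\Gamma$ with the hypersurface $\VV(g_{n})$, and its residual scheme inside $\Gamma$ is $\Gamma':=\VV\bigl((I_{\Gamma}:g_{n})\bigr)$, which is exactly the subscheme $\Gamma'$ used above (on which, away from $\Gamma''$, the form $g_{n}$ is invertible). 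With this matching of notation, Claim~\ref{C:CB} is precisely the conclusion of \cite[Theorem~CB6]{cayley-bacharach} applied to the pair $(\Gamma',\Gamma'')$ and to the integer $r\le s=\sigma$.

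The only genuinely delicate point, and the one I expect to be the main --- if modest --- obstacle, is the bookkeeping of conventions: one must make sure that the notion of residual scheme used in \cite{cayley-bacharach} agrees with the ideal quotient $(I_{\Gamma}:g_{n})$ that is implicit in the Koszul computation above (where $\HH_{1}(K_{\bullet})\simeq\Ann_{R}(g_{n})(-d)\simeq\bigl((I_{\Gamma}:g_{n})/I_{\Gamma}\bigr)(-d)$ with $R=S/(g_{1},\dots,g_{n-1})$), that the constant $\sigma$ and the constraint $r\le\sigma$ translate as above, and that ``dimension of a family'' is read projectively on both sides of the equality. Once this dictionary is in place the claim follows with no further argument, and one then applies it with $r=n(d-1)-1-d=s-1$, whose complementary degree is $1$, so that the relevant quantity vanishes precisely because $\Gamma''=\{p_{1},\dots,p_{k}\}$ --- being a set of points in general linear position --- imposes independent conditions on linear forms.
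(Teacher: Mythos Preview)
Your proposal is correct and matches the paper's approach exactly: the paper simply states that Claim~\ref{C:CB} is an application of \cite[Theorem~CB6]{cayley-bacharach} without further argument, and your write-up carries out precisely the verification of hypotheses (identifying $N=n-1$, $e_1=\cdots=e_{n-1}=d$, $\sigma=s$, and the residual decomposition $\Gamma=\Gamma'\cup\Gamma''$) that this citation entails. Your additional remarks on the bookkeeping of conventions and on the subsequent application with $r=s-1$ are accurate and helpful, but go beyond what the paper itself spells out.
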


In our situation $r=s-1$, and $\Gamma''$ imposes independent conditions on hyperplanes
by the general linear position assumption. Hence we conclude by Claim \ref{C:CB} that 
every form of degree $n(d-1)-1-d$ that vanishes on all of $\Gamma'$ also vanishes on all of $\Gamma''$ and therefore, as the ideal $(g_1,\dots,g_{n-1})$ is saturated, maps to $0$ in $R$. We thus see that $\Ann_{R}(g_n)_{n(d-1)-1-d}=0$. 
This finishes the proof.
\end{proof}

Motivated by the result above, 
we consider the following partial stratification of the resultant divisor $\frakRes \subset \Grass(n, S_d)$. 
For $1\leq k \leq n$, define $Z_{k}$ to be the locally closed subset of $\Grass(n, S_d)$ consisting 
of all subspaces $U$ such that $\VV(I_U)$ is scheme-theoretically a set of $k$ distinct points in general linear position in $\PP^{n-1}$. 
Clearly, $Z_1$ is dense in $\frakRes$, and 
\[
\overline{Z}_{k} \supset Z_{k+1}\cup \cdots \cup Z_n. 
\]
We will also set $\Sigma_{k}:=\nabla^{-1}(Z_k) \subset \PP(S_{d+1})$. By the Jacobian criterion, $\Sigma_k$ is the locus
of hypersurfaces with only $k$ ordinary double points in general linear position and no other singularities.

\begin{lemma}\label{L:non-empty} For every $1\leq k\leq n$, one has that $Z_k$ is a non-empty and irreducible subset of 
$\Grass(n, S_d)$, 
and $\Sigma_k$ is a non-empty and irreducible subset of $\PP(S_{d+1})^{ss}$.
\end{lemma}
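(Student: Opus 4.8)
The plan is to construct explicit subspaces in $Z_k$ and explicit forms in $\Sigma_k$, and then establish irreducibility by exhibiting $Z_k$ (resp. $\Sigma_k$) as the image of an irreducible parameter space under a morphism. For non-emptiness, I would first normalize the configuration using the remark following Proposition \ref{P:A-Gr-defined}: up to $\PGL(n)$, the $k$ points in general linear position are the coordinate points $p_i = \{x_1 = \cdots = \widehat{x_i} = \cdots = x_n = 0\}$ for $i = 1, \dots, k$. Then I would write down a concrete $U \in \Grass(n, S_d)$ whose vanishing locus is exactly $\{p_1,\dots,p_k\}$ scheme-theoretically — for instance, take $U$ spanned by $n$ suitably generic degree-$d$ forms in the (saturated) ideal of these $k$ points; one must check the base locus is reduced and zero-dimensional, which is where the case $(n,d)\neq(2,2)$ and $d\ge 2$ enters to guarantee enough forms of degree $d$ to cut out the points cleanly. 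For $\Sigma_k = \nabla^{-1}(Z_k)$, by the Jacobian-criterion description given just above the lemma, $\Sigma_k$ is the locus of degree-$(d+1)$ hypersurfaces with exactly $k$ ordinary double points in general linear position and no other singularities, so non-emptiness amounts to producing one such hypersurface — e.g. a small generic deformation within the linear system of forms singular at $p_1,\dots,p_k$, arguing that generically the singularities are nodes, none collide, and no extra singular points appear.

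For irreducibility, the cleanest route is to realize $Z_k$ as the image of an irreducible incidence variety. Let $\mathcal{P}_k \subset (\PP^{n-1})^k$ be the (irreducible, since it is a dense open in an irreducible product, being the complement of the non-general-position locus) variety of ordered $k$-tuples of points in general linear position, and consider the incidence correspondence
\[
\widetilde{Z}_k = \{(U, (p_1,\dots,p_k)) \mid \VV(I_U) = \{p_1,\dots,p_k\} \text{ scheme-theoretically}\} \subset \Grass(n, S_d) \times \mathcal{P}_k.
\]
The fiber of $\widetilde{Z}_k$ over a fixed tuple $(p_1,\dots,p_k)$ is the set of $n$-dimensional subspaces $U$ of $S_d$ with $\VV(I_U)$ equal to that tuple scheme-theoretically; I claim this fiber is an irreducible (in fact open, hence dense in a Grassmannian of the appropriate dimension) subset of $\Grass(n, (J)_d)$, where $J$ is the saturated ideal of the points. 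Concretely, the condition "$I_U$ cuts out exactly the $k$ reduced points" is an open condition on $\Grass(n, (J)_d)$ — it fails precisely when the base locus jumps in dimension or acquires embedded/non-reduced structure — so each fiber is an open subvariety of an irreducible Grassmannian, and since $\mathcal{P}_k$ is irreducible and the fibers are irreducible of constant dimension, $\widetilde{Z}_k$ is irreducible; then $Z_k$, being its image under projection to $\Grass(n,S_d)$, is irreducible. The same argument handles $\Sigma_k$: either pull back along $\nabla$ using that $\nabla^{-1}(Z_k) = \Sigma_k$ and that the relevant fibers of $\nabla$ are irreducible (by Lemma \ref{L:gradient-fiber}, off the direct-sum locus $\nabla$ is injective, and one checks the forms in $\Sigma_k$ are not direct sums for $k \le n$), or run the incidence-variety argument directly with $\Sigma_k$ parameterized by the choice of $k$ node locations together with a choice of local quadratic terms and the remaining moduli of the hypersurface.

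The main obstacle I anticipate is the fiber-irreducibility and openness claim for $\widetilde{Z}_k$: showing that, for a fixed general-position tuple of points, the set of $n$-dimensional subspaces $U \subseteq (J)_d$ whose associated ideal cuts out exactly the reduced scheme $\{p_1,\dots,p_k\}$ is a non-empty open subset of the Grassmannian $\Grass(n,(J)_d)$. Non-emptiness here requires $\dim_{\CC}(J)_d \ge n$ and that a generic such $U$ generates an ideal agreeing with $J$ in all sufficiently high degrees (so that the base scheme is reduced of dimension zero), which is a Castelnuovo–Mumford regularity / general-position input; the excluded degenerate case and the hypothesis $d \ge 2$ are exactly what make this work. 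Once that openness is in hand, irreducibility of $Z_k$ and $\Sigma_k$ follows formally from irreducibility of the product $(\PP^{n-1})^k$ and of the Grassmannian fibers, and semicontinuity of fiber dimension gives that $\Sigma_k$ lands in $\PP(S_{d+1})^{ss}$ (indeed in $\PP(S_{d+1})_\Delta$ is false — these are singular — but the nodal hypersurfaces are semistable by standard GIT, e.g. since a hypersurface with only nodes in general position is stable for $d+1 \ge 3$).
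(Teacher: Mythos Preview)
Your irreducibility argument is essentially the paper's: both run an incidence/fiber argument over the configuration space of $k$ points in general linear position, observing that the fiber over a fixed configuration is a nonempty open (hence irreducible) subset of a linear space or Grassmannian. The paper phrases this as $Z_k = \PGL(n)\cdot Z^{0}(p_1,\dots,p_k)$ and $\Sigma_k = \PGL(n)\cdot \Sigma^{0}(p_1,\dots,p_k)$, using the transitivity of $\PGL(n)$ on ordered general-position $k$-tuples, which is exactly the content of your incidence-variety setup.

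Two remarks. First, your ``pull back along $\nabla$'' alternative for the irreducibility of $\Sigma_k$ does not work as stated. Even granting that forms in $\Sigma_k$ are not direct sums (which is true: at any singular point of a direct sum the Hessian is block-diagonal of rank at most $n-2$, so no node can occur), the restriction $\nabla\colon \Sigma_k \to Z_k$ is far from surjective, since a generic $U \in Z_k$ is not the span of partials of any form. An injective morphism into a proper subset of an irreducible target need not have irreducible source. Your second alternative, running the incidence argument directly for $\Sigma_k$, is correct and is what the paper does.

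Second, for non-emptiness the paper sidesteps the regularity check you flag as the main obstacle. Rather than verifying that a generic $U \in \Grass\bigl(n, (J)_d\bigr)$ cuts out the reduced points, the paper exhibits an explicit element of $\Sigma_n$, namely
\[
(d-1)\Bigl(\sum_i x_i\Bigr)^{d+1} - (d+1)\Bigl(\sum_i x_i\Bigr)^{d-1}\Bigl(\sum_i x_i^2\Bigr) + 2\sum_i x_i^{d+1},
\]
and then for $k<n$ deforms into $\Sigma_k$ by replacing $F$ with $F+tG$ for small $t$, where $G$ is a generic form vanishing at $p_1,\dots,p_k$ and non-vanishing at $p_{k+1},\dots,p_n$. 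Non-emptiness of $Z_k$ follows immediately since $\Sigma_k = \nabla^{-1}(Z_k)$.
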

\begin{proof}  
It follows from the Hilbert-Mumford numerical criterion that any hypersurface in $\PP^{n-1}$ of degree
$d+1$ with at worst ordinary double point singularities is semistable. 

Having $k$ singularities at $k$ fixed points $p_1,\dots, p_k$
(resp., having $k$ fixed base points $p_1,\dots, p_k$)  in general linear position
is a linear condition on the elements of $\PP(S_{d+1})$ (resp., the elements of the Stiefel variety 
over $\Grass(n, S_d)$) and so defines an irreducible closed subvariety $\Sigma(p_1,\dots,p_k)$ 
in $\PP(S_{d+1})$ (resp., $Z(p_1,\dots,p_k)$ in $\Grass(n, S_d)$).
The property of having exactly ordinary double points at $p_1,\dots, p_k$ (resp., having the base
locus being equal to $\{p_1,\dots, p_k\}$ scheme-theoretically) is an open condition in 
$\Sigma(p_1,\dots,p_k)$ in $\PP(S_{d+1})$ (resp., $Z(p_1,\dots,p_k)$ in $\Grass(n, S_d)$) and so defines an irreducible
subvariety $\Sigma^{0}(p_1,\dots,p_k)$ (resp., $Z^0(p_1,\dots,p_k)$). We conclude
the proof of irreducibility by noting that $\Sigma_k=\PGL(n)\cdot \Sigma^{0}(p_1,\dots,p_k)$
(resp., $Z_k=\PGL(n)\cdot Z^{0}(p_1,\dots,p_k)$).

Since $\Sigma_{k}=\nabla^{-1}(Z_k)$, it suffices to check the non-emptiness of $\Sigma_{k}$.  
If $F\in \Sigma_n$ has ordinary double points at $p_1,\dots, p_n$, then by the deformation 
theory of hypersurfaces, there exists a deformation 
of $F$ with ordinary double points at $p_1,\dots, p_k$ and no other singularities. Indeed, if $G\in S_{d+1}$
is a general form vanishing at $p_1,\dots, p_k$ and non-vanishing at $p_{k+1},\dots, p_n$, then 
$F+tG \in \Sigma^{0}(p_1,\dots,p_k)$ will have ordinary double points
at $p_1,\dots, p_k$ and no other singularities  for $0<t\ll 1$. 

It remains to prove that $\Sigma_n$ is non-empty. Indeed, the following is an 
element of $\Sigma_n$:
\begin{equation*}
(d-1)(x_1+\cdots+x_n)^{d+1}-(d+1)(x_1+\cdots+x_{n})^{d-1}(x_1^2+\cdots+x_n^2)
+2(x_1^{d+1}+\cdots+x_n^{d+1}).
\end{equation*}
In fact, a generic linear combination of all degree $(d+1)$ monomials 
with the exception of $x_i^{d+1}$, for $i=1,\dots, n$, and $x_i^{d}x_j$, for $i, j=1,\dots, n$, $i<j$,
is a form with precisely $n$ ordinary double point singularities in general linear position. 
\end{proof}   

By Proposition \ref{P:A-Gr-defined}, we know that $\AA_{\Gr}$ is defined at all points of 
$Z_{1}\cup \cdots \cup Z_n$. In fact, we can explicitly compute $\AA_{\Gr}(U)$ for all
$U\in Z_n$, as well as the orbit closure of $\AA_{\Gr}(U)$ for all $U\in Z_{n-1}$. We need a preliminary fact.

\begin{prop}
\label{P:power}
Suppose $U\in \Grass(n, S_d)$ and $p\in \VV(I_U)\subset \PP V^{\vee}$. Let $L \in V^{\vee}$ be a non-zero linear form 
corresponding to $p$. Then $L^{n(d-1)-1} \in (I_U)_{n(d-1)-1}^\perp$. 
\end{prop}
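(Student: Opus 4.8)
The plan is to reformulate the desired membership $L^{n(d-1)-1}\in (I_U)_{n(d-1)-1}^\perp$ in terms of vanishing at the point $p$, using the duality machinery of Section~\ref{S:dualities}. Recall that $p\in \VV(I_U)$ means that every element of $U$, viewed as a differential operator, annihilates the linear form $L\in V^\vee$ in the appropriate sense; more precisely, writing $p=[a_1:\cdots:a_n]$ in homogeneous coordinates and $L=a_1z_1+\cdots+a_nz_n$, the condition $p\in\VV(I_U)$ says $g(a_1,\dots,a_n)=0$ for all $g\in U$. By Remark~\ref{R:vanishing}, the vanishing of a collection of forms in $\D_m$ at $(a_1,\dots,a_n)$ is equivalent to $L^m$ lying in the perp of that collection in $S_m$. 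So the strategy is to identify $(I_U)_{n(d-1)-1}^\perp$ (a subspace of $\D_{n(d-1)-1}$, in the notation of \eqref{E:A-Gr}) with the perp in $S_{n(d-1)-1}$ of a suitable family of forms in $\D_{n(d-1)-1}$, and then check that $L^{n(d-1)-1}$ kills that family.

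Here is the concrete chain. First I would note that $(I_U)_{n(d-1)-1}$ is spanned by products $g\cdot m$ where $g$ ranges over a basis of $U\subset S_d$ and $m$ ranges over monomials of degree $n(d-1)-1-d$ in $S$. For $L^{n(d-1)-1}\in\D_{n(d-1)-1}$ to annihilate each such $g\cdot m$ under the polar pairing $S_{n(d-1)-1}\times\D_{n(d-1)-1}\to\CC$, it suffices that $(g\cdot m)\circ L^{n(d-1)-1}=0$. Expanding the differential operator, $(g\cdot m)\circ L^{n(d-1)-1} = g\circ\bigl(m\circ L^{n(d-1)-1}\bigr)$, and since $m$ has degree $n(d-1)-1-d\geq 0$ (this is where we use $d\geq 2$, $n\geq 2$, $(n,d)\neq(2,2)$, so the exponent is nonnegative), $m\circ L^{n(d-1)-1}$ is a nonzero scalar multiple of $L^{d}$. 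Thus the condition reduces to: $g\circ L^{d}=0$ for every $g\in U$. By Corollary~\ref{C:evaluation} (applied with $m=d$ and $\omega$ chosen so that $\mathfrak{D}_\omega$ is proportional to $L^d$, or directly by the formula $(a_1\partial/\partial z_1+\cdots+a_n\partial/\partial z_n)^d$ acting on a degree-$d$ form), $g\circ L^d$ is, up to a nonzero scalar, the value $g(a_1,\dots,a_n)$. This vanishes precisely because $p=[a_1:\cdots:a_n]\in\VV(I_U)$, so $g(a_1,\dots,a_n)=0$ for all $g\in U$, hence for all $g\in I_U$.

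Assembling: $L^{n(d-1)-1}$ pairs to zero with every generator $g\cdot m$ of $(I_U)_{n(d-1)-1}$, hence with all of $(I_U)_{n(d-1)-1}$, which is exactly the statement $L^{n(d-1)-1}\in (I_U)_{n(d-1)-1}^\perp$. The main thing to be careful about is the bookkeeping of which ring each object lives in—$L$ is a linear form in $V^\vee$, so $L^d\in\Sym^d V^\vee=\D_d$ and the partial derivative operators in $g\in S_d=\Sym^d V$ act on it—and verifying that the constant $m\circ L^{n(d-1)-1}$ is genuinely nonzero (it is $\bigl(\prod (n(d-1)-1)!/(n(d-1)-1-|m|)!\bigr)$ times a monomial in the $a_i$'s times $L^d$, up to the multinomial conventions of Claim~\ref{C:explicit}); one should also double-check the degenerate edge cases to confirm $n(d-1)-1-d\geq 0$ always holds under the standing hypotheses. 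No step here is a genuine obstacle—the content is entirely the translation of "$p$ is a common zero" into the apolarity language already set up—so this is really a short verification rather than a theorem with a hard core.
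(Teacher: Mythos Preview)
Your argument is correct and is essentially the same as the paper's: both reduce to the observation that every element of $(I_U)_{n(d-1)-1}$ vanishes at $p$, which via the apolarity pairing (the symmetric analogue of Remark~\ref{R:vanishing}) is exactly the statement $L^{n(d-1)-1}\in (I_U)_{n(d-1)-1}^\perp$. The paper compresses this into one line, whereas you unpack it by writing $(I_U)_{n(d-1)-1}=S_{n(d-1)-1-d}\cdot U$ and computing explicitly; note, though, that you do \emph{not} need $m\circ L^{n(d-1)-1}$ to be nonzero---if some $a_i=0$ the scalar may vanish, and then the pairing is zero trivially---so that worry can be dropped.
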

\begin{proof}
Since $p\in \VV(I_U)$, all elements of $(I_U)_{n(d-1)-1}$ vanish at $p$, and it follows that $F\circ L^{n(d-1)-1}=0$ for all $F\in (I_U)_{n(d-1)-1}$ (cf.~Remark \ref{R:vanishing}). \end{proof}

\begin{corollary}
\label{C:Fermat} Suppose $U\in Z_k$ is such that 
\[
\VV(I_U)=\{p_1:=[1:0:\cdots:0], p_2:=[0:1:\cdots:0], \dots, p_k:=[0:\cdots:1:\cdots :0]\}.
\]
Then 
\[
\AA_{\Gr}(U) = \langle z_1^{n(d-1)-1}, \dots, z_k^{n(d-1)-1}, g_{k+1}(z_1,\dots,z_n), \dots, g_n(z_1,\dots,z_n) \rangle,
\]
for some $g_{k+1},\dots, g_n\in \D_{n(d-1)-1}$.
In particular, for $U\in Z_n$ one has
\[
\AA_{\Gr}(U) = \langle z_1^{n(d-1)-1}, \dots, z_n^{n(d-1)}\rangle=\nabla\bigl(\bigl[z_1^{n(d-1)}+\cdots+z_n^{n(d-1)}\bigr]\bigr).
\]
Moreover, for a generic $U\in Z_k$, we have $\AA_{\Gr}(U) \in \Grass(n, \D_{n(d-1)})_{\Res}$.
\end{corollary}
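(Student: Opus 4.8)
The plan is to read off all three assertions from Propositions~\ref{P:A-Gr-defined} and~\ref{P:power} together with the irreducibility and incidence properties of the strata $Z_k$ recorded in Lemma~\ref{L:non-empty}. First, since $U\in Z_k\subset W_{n,d}$ by Proposition~\ref{P:A-Gr-defined}, the subspace $\AA_{\Gr}(U)=(I_U)_{n(d-1)-1}^\perp$ is a well-defined $n$-dimensional subspace of $\D_{n(d-1)-1}$ (the codomain of $\AA_{\Gr}$ being $\Grass(n,\D_{n(d-1)-1})$). With the conventions of Section~\ref{S:dualities}, the coordinate point $p_i$ corresponds to the linear form $z_i\in V^\vee=\D_1$, so Proposition~\ref{P:power} gives $z_i^{n(d-1)-1}\in\AA_{\Gr}(U)$ for every $i=1,\dots,k$. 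These are pairwise distinct monomials, hence linearly independent, and completing them to a basis of the $n$-dimensional space $\AA_{\Gr}(U)$ yields the first displayed formula $\AA_{\Gr}(U)=\langle z_1^{n(d-1)-1},\dots,z_k^{n(d-1)-1},g_{k+1},\dots,g_n\rangle$. When $k=n$ the $n$ powers $z_1^{n(d-1)-1},\dots,z_n^{n(d-1)-1}$ already span $\AA_{\Gr}(U)$, which gives the second formula.

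Next I would identify this span with a gradient point by the one-line computation $\partial\bigl(z_1^{n(d-1)}+\cdots+z_n^{n(d-1)}\bigr)/\partial z_i=n(d-1)\,z_i^{n(d-1)-1}$, so that for $U\in Z_n$ one gets $\AA_{\Gr}(U)=\langle z_1^{n(d-1)-1},\dots,z_n^{n(d-1)-1}\rangle=\nabla\bigl(\bigl[z_1^{n(d-1)}+\cdots+z_n^{n(d-1)}\bigr]\bigr)$. The $n$ forms $z_1^{n(d-1)-1},\dots,z_n^{n(d-1)-1}$ visibly have no common zero in $\PP^{n-1}$, so this span is a regular sequence; equivalently, the Fermat form $z_1^{n(d-1)}+\cdots+z_n^{n(d-1)}$ is non-degenerate. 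Hence $\AA_{\Gr}$ maps $Z_n$ into the open locus $\Grass(n,\D_{n(d-1)-1})_{\Res}$ of regular sequences.

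For the last assertion, concerning a generic $U\in Z_k$, the set $\AA_{\Gr}^{-1}\bigl(\Grass(n,\D_{n(d-1)-1})_{\Res}\bigr)$ is the preimage of an open subset under the morphism $\AA_{\Gr}\colon W_{n,d}\to\Grass(n,\D_{n(d-1)-1})$, hence open in $W_{n,d}$; its intersection with $Z_k$ is therefore open in $Z_k$. On the other hand, by the stratification inclusion $\overline{Z}_k\supset Z_{k+1}\cup\cdots\cup Z_n$ recorded above, we have $Z_n\subset\overline{Z}_k$, and we have just checked that $\AA_{\Gr}$ carries $Z_n$ into $\Grass(n,\D_{n(d-1)-1})_{\Res}$. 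Thus the open subset $\AA_{\Gr}^{-1}\bigl(\Grass(n,\D_{n(d-1)-1})_{\Res}\bigr)$ of $W_{n,d}$ contains a point of $Z_n\subset\overline{Z}_k$, and so it must meet $Z_k$. Since $Z_k$ is irreducible by Lemma~\ref{L:non-empty}, a non-empty open subset of it is dense, which is exactly the last assertion.

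Most of this is bookkeeping once Propositions~\ref{P:A-Gr-defined} and~\ref{P:power} and Lemma~\ref{L:non-empty} are in hand; the two points that need a little care are matching the coordinate point $p_i$ with the linear form $z_i$ under the duality conventions of Section~\ref{S:dualities} (so that Proposition~\ref{P:power} produces exactly the monomials $z_i^{n(d-1)-1}$), and the topological step that an open subset of $W_{n,d}$ meeting $\overline{Z}_k$ at a point of $Z_n$ must already meet $Z_k$ itself --- which is where the irreducibility of $Z_k$ and the inclusion $Z_n\subset\overline{Z}_k$ are used.
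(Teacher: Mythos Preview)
Your proof is correct and follows essentially the same approach as the paper's: both use Proposition~\ref{P:power} to place the monomials $z_i^{n(d-1)-1}$ in $\AA_{\Gr}(U)$, and both deduce the generic regular-sequence statement from the inclusion $Z_n\subset\overline{Z}_k$ together with the fact that $\AA_{\Gr}$ lands in the resultant-complement on $Z_n$. You simply spell out more of the topological bookkeeping (openness of the preimage, irreducibility of $Z_k$, and density of $Z_k$ in $\overline{Z}_k$) than the paper does.
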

\begin{proof} 
Since the point $p_i=\VV(x_1,\dots, \widehat{x_i}, \dots, x_n) \in \PP V^{\vee}$ corresponds to the linear form $z_i \in V^{\vee}$,
Proposition \ref{P:power} implies that $z_i^{n(d-1)-1}\in \AA_{\Gr}(U)$ for every $i=1,\dots, k$. 

As $Z_n \subset \overline{Z}_{k}$ and $\AA_{\Gr}(U)\in \Grass(n, \D_{n(d-1)})_{\Res}$ for every 
$U\in Z_n$, it follows that $\AA_{\Gr}(U)$ is also spanned by a regular sequence for a generic $U\in Z_{k}$.
The claim follows.
\end{proof}


Consider the rational maps
\[
\begin{gathered}
\xymatrix{
\PP(S_{d+1})^{ss}\gitq \SL(n) \ar[rd]_{\overline{\nabla}} \ar@{-->}[rr]^{\bar{A}} & & \PP(\D_{n(d-1)})^{ss} \gitq \SL(n) \\
& \Grass(n, S_d)^{ss} \gitq  \SL(n) \ar@{-->}[ur]_{\bar{\AA}} &
}\label{D:rational-triangle-GIT}
\end{gathered}
\]
of projective GIT quotients.  

\begin{theorem}
\label{T:Fermat}
There is a dense open subset $Y_k$ of $Z_k$ such that  \[
\bar \AA \colon \Grass(n,S_d)^{ss} \gitq \SL(n) \dashrightarrow \PP(\D_{n(d-1)})^{ss}\gitq \SL(n)
\] is defined on $\pi_1(Y_k)$, $k=1,\dots,n$. Moreover, for $U\in Y_k$ the value $\bar \AA(\pi_1(U))$ is the image under $\pi_2$ of a polystable $k$-partial Fermat form. In particular, for every $U\in Z_{n}$ and for a generic $U\in Z_{n-1}$ 
\[
\bar \AA(\pi_1(U))=\pi_2\left(z_1^{n(d-1)}+\cdots+z_n^{n(d-1)}\right).
\]

\end{theorem}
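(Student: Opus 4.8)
\emph{Overview.} Write $N:=n(d-1)$. The plan is to pass to the gradient--point side of the diagram in Section~\ref{S:dualities}, where Theorem~\ref{T:gradient}, applied with $\D$ in place of $S$ and with $N=(N-1)+1$ in the role of $d+1$ (admissible, since $N\geq 3$ and the excluded pair $(n,N-1)=(2,2)$ never arises), shows that the induced morphism
\[
\overline{\nabla}_{\D}\colon \PP(\D_N)^{ss}\gitq\SL(n)\longrightarrow \Grass(n,\D_{N-1})^{ss}\gitq\SL(n)
\]
is a closed immersion. Because the whole picture is $\SL(n)$-equivariant, I may assume that $U\in Z_k$ has $\VV(I_U)=\{p_1,\dots,p_k\}$ equal to the standard coordinate points; then Corollary~\ref{C:Fermat} gives $\AA_{\Gr}(U)=\langle z_1^{N-1},\dots,z_k^{N-1},g_{k+1},\dots,g_n\rangle$ for suitable $g_j\in\D_{N-1}$, and, shrinking $Z_k$ to a dense open subset, I may also assume this span is a regular sequence, hence a semistable point of $\Grass(n,\D_{N-1})$.

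\emph{The structural claim.} The core of the proof is that for $U$ in a dense open $Y_k\subseteq Z_k$ there is a \emph{polystable} $k$-partial Fermat form $G_U=z_1^N+\cdots+z_k^N+h_U(z_{k+1},\dots,z_n)$ whose gradient point $\nabla(G_U)=\langle z_1^{N-1},\dots,z_k^{N-1}\rangle\oplus\nabla(h_U)$ lies in $\overline{\SL(n)\cdot\AA_{\Gr}(U)}$. Granting this, $\nabla(G_U)$ is polystable by Corollary~\ref{C:nablaprespolystability} (applied to $\D$), so $\SL(n)\cdot\nabla(G_U)$ is \emph{the} unique closed orbit in the closure of the semistable orbit $\SL(n)\cdot\AA_{\Gr}(U)$, and therefore $\pi_3(\AA_{\Gr}(U))=\pi_3(\nabla(G_U))=\overline{\nabla}_{\D}(\pi_2(G_U))$. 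For $k=n$ this is Corollary~\ref{C:Fermat} verbatim: $\AA_{\Gr}(U)=\nabla(z_1^N+\cdots+z_n^N)$, with the Fermat form polystable since $N\geq 3$. For generic $U\in Z_{n-1}$ the coefficient of $z_n^{N-1}$ in $g_n$ is nonzero (otherwise $\AA_{\Gr}(U)$ would lie on the resultant divisor), so the one-parameter subgroup of $\SL(n)$ acting with weight $-1$ on $z_1,\dots,z_{n-1}$ and weight $n-1$ on $z_n$ degenerates $\AA_{\Gr}(U)$ to $\langle z_1^{N-1},\dots,z_n^{N-1}\rangle=\nabla(z_1^N+\cdots+z_n^N)$, so again $G_U$ is linearly equivalent to the Fermat form. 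For general $k$ one degenerates $\AA_{\Gr}(U)$ under a torus of $\SL(n)$ making $z_1,\dots,z_k$ heavy relative to $z_{k+1},\dots,z_n$, absorbing the pure $z_1,\dots,z_k$-parts of the $g_j$ into $\langle z_1^{N-1},\dots,z_k^{N-1}\rangle$, and one shows---using the recognition criteria for direct sums of \cite{fedorchuk-direct} and the general-position hypothesis on $\VV(I_U)$---that the resulting limit is the gradient point of an honest $k$-partial Fermat form, which for generic $U$ may be taken polystable.

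\emph{Transporting back.} On the dense open $\Grass(n,S_d)_{\Res}\gitq\SL(n)$ the identity $\nabla\circ\AA=\AA_{\Gr}$ descends to $\overline{\nabla}_{\D}\circ\bar{\AA}=\overline{\AA_{\Gr}}$; since $\AA_{\Gr}$ is a morphism on the open set $W_{n,d}\supseteq Z_k$, the right-hand side is regular in a neighborhood of $\pi_1(U)$ with value $\pi_3(\AA_{\Gr}(U))$. Composing with the inverse of the closed immersion $\overline{\nabla}_{\D}$ yields that $\bar{\AA}=\overline{\nabla}_{\D}^{-1}\circ\overline{\AA_{\Gr}}$ is regular at $\pi_1(U)$ and
\[
\bar{\AA}(\pi_1(U))=\overline{\nabla}_{\D}^{-1}\bigl(\pi_3(\AA_{\Gr}(U))\bigr)=\pi_2(G_U),
\]
the image under $\pi_2$ of a polystable $k$-partial Fermat form. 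This is the main assertion, and the ``In particular'' clause follows because for $k=n$ (always) and for generic $U\in Z_{n-1}$ the form $G_U$ is linearly equivalent to $z_1^N+\cdots+z_n^N$, whence $\bar{\AA}(\pi_1(U))=\pi_2(z_1^{n(d-1)}+\cdots+z_n^{n(d-1)})$.

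\emph{Main obstacle.} I expect the general-$k$ case of the structural claim to be the principal difficulty. For $k\in\{n-1,n\}$ the ``light'' variables $z_{k+1},\dots,z_n$ number at most one, so the subspace they cut out of the limit is automatically a gradient point and the resulting form is automatically (equivalent to) the polystable Fermat form; for $k\leq n-2$ one must genuinely control the torus degeneration of $\AA_{\Gr}(U)=(I_U)_{N-1}^{\perp}$, which forces one to understand $(I_U)_{N-1}$ as a subspace of the ideal of $k$ points in general linear position and to invoke the direct-sum machinery of \cite{fedorchuk-direct} both to identify the limit and to verify its polystability for a generic such $U$.
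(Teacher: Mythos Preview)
Your treatment of the cases $k\in\{n-1,n\}$ is correct and agrees with the paper. For general $k$, however, there is a genuine gap, and the approach you sketch will not close it as stated: the torus limit $\langle z_1^{N-1},\dots,z_k^{N-1},\bar g_{k+1},\dots,\bar g_n\rangle$ is in general \emph{not} the gradient point of any form. Being a gradient point is a closed, proper condition on $\Grass\bigl(n-k,\CC[z_{k+1},\dots,z_n]_{N-1}\bigr)$ once $n-k\ge 2$, and there is no reason the specific subspace $\langle\bar g_{k+1},\dots,\bar g_n\rangle$---which is determined by $(I_U)_{N-1}^{\perp}$, not chosen freely---should satisfy it for generic $U\in Z_k$. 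So your structural claim breaks precisely where you write ``one shows\dots\ that the resulting limit is the gradient point of an honest $k$-partial Fermat form''; neither the direct-sum criteria of \cite{fedorchuk-direct} nor the general-position hypothesis on $\VV(I_U)$ will supply this.

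The paper avoids the obstacle by reversing the order of the argument. You already observe that $\overline{\AA_{\Gr}}$ is regular near $\pi_1(U)$; since on the dense open $\Grass(n,S_d)_{\Res}\gitq\SL(n)$ its values lie in the \emph{closed} image of $\overline\nabla_{\D}$, the same holds at $\pi_1(U)$ by continuity. Hence $\bar\AA(\pi_1(U))$ is defined immediately, with no $G_U$ required. Now take the polystable representative $F$ of $\bar\AA(\pi_1(U))$: then $\nabla(F)$ is polystable (Corollary~\ref{C:nablaprespolystability}) and so represents the unique closed orbit in $\overline{\SL(n)\cdot\AA_{\Gr}(U)}$, which in turn sits inside the orbit closure of the decomposable subspace $\langle z_1^{N-1},\dots,z_k^{N-1},\bar g_{k+1},\dots,\bar g_n\rangle$. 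Any further degeneration of that subspace may be taken in the centralizer of its torus stabilizer and hence preserves the decomposable shape, so up to $\SL(n)$ one has $\nabla(F)=\langle z_1^{N-1},\dots,z_k^{N-1}\rangle\oplus W$ with $W\subset\CC[z_{k+1},\dots,z_n]_{N-1}$. Since $\nabla(F)$ \emph{is} a gradient point by construction, $F$ is (after rescaling) a $k$-partial Fermat form, polystable by choice. Thus the ``main obstacle'' you flag dissolves: one never needs the torus limit itself to be a gradient point, only that the closed orbit below it---which is automatically $\nabla(F)$---retains the decomposable shape.
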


\begin{proof} Recall that $Z_k$ is non-empty by Lemma \ref{L:non-empty}.
Suppose $U\in Z_k$ is generic, then by Corollary \ref{C:Fermat} in suitable coordinates we have
\[
\AA_{\Gr}(U) = \langle z_1^{n(d-1)-1}, \dots, z_k^{n(d-1)-1}, g_{k+1}(z_1,\dots,z_n), \dots, g_n(z_1,\dots,z_n) \rangle,
\]
and $\AA_{\Gr}(U)\notin \frakRes$.
It follows (as in the proof of \cite[Proposition 2.7]{fedorchuk-isaev}) that the closure of the $\SL(n)$-orbit of $\AA_{\Gr}(U)$
contains 
\begin{equation}\label{E:decomposable-A}
 \langle z_1^{n(d-1)-1}, \dots, z_k^{n(d-1)-1}, \bar g_{k+1}(z_{k+1},\dots,z_n), \dots, \bar g_n(z_{k+1},\dots,z_n) \rangle,
\end{equation}
where $\bar g_i:=g_i(0,\dots,0, z_{k+1},\dots, z_n)$ for $i=k+1,\dots, n$.
Then the claim follows for for $k=n-1$ and $k=n$ as in these cases we necessarily have $\bar g_n=z_n^{n(d-1)-1}$.

For $k$ arbitrary, since $\overline{\nabla}$ is a closed
immersion by Theorem \ref{T:gradient}, we conclude that $\bar \AA$ is defined at $\pi_1(U)$. Let 
$F\in \PP(\D_{n(d-1)})^{ss}$ be a polystable element with $\pi_2(F)=\bar\AA(\pi_1(U))$. Then we must have 
$\nabla(F) \in \overline{\SL(n)\cdot \AA_{\Gr}(U)}$, and so $\nabla(F)$ is linearly equivalent to an element of the form \eqref{E:decomposable-A}. It follows at once that 
\[
\bar \AA(\pi_1(U))=\pi_2\left(z_1^{n(d-1)}+\cdots+z_k^{n(d-1)-1}+G(z_{k+1},\dots,z_n)\right)
\]
is the image under $\pi_2$ of a polystable $k$-partial Fermat form.
\end{proof}

We will now establish Theorem \ref{T:barA1} as detailed in the next two corollaries.

\begin{corollary}\label{cor:n-1,n}
The rational map
\[
\bar A \colon (\PP S_{d+1})^{ss}\gitq \SL(n) \dashrightarrow \PP(\D_{n(d-1)})^{ss}\gitq \SL(n)
\]
is defined at a generic point of $\pi_0(\Sigma_{n-1})$ and at every point of $\pi_0(\Sigma_n)$. For a generic $f\in \Sigma_{n-1}$ and 
for every $f\in \Sigma_n$, we have 
\[
\bar A(\pi_0(f))=\pi_2(z_1^{n(d-1)}+\cdots+z_n^{n(d-1)}).
\]
\end{corollary}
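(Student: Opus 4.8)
The plan is to deduce this corollary directly from Theorem \ref{T:Fermat} together with the identity $\nabla\circ A = \AA\circ\nabla$ (equivalently, the commutativity of diagram \eqref{D:affine-triangle}) and the commutativity of the big diagram built from $\nabla$, $\AA$, $\AA_{\Gr}$, and the GIT quotient maps. First I would recall the definitions: $\Sigma_k = \nabla^{-1}(Z_k) \subset \PP(S_{d+1})$, so if $f \in \Sigma_k$ then $\nabla([f]) = U$ for some $U \in Z_k$; and since $\overline{\nabla}$ is a closed immersion by Theorem \ref{T:gradient}, the point $\pi_0([f]) \in \PP(S_{d+1})^{ss}\gitq\SL(n)$ is carried isomorphically onto $\pi_1(U) \in \Grass(n,S_d)^{ss}\gitq\SL(n)$. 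Thus $\bar A$ is defined at $\pi_0([f])$ precisely when $\bar\AA$ is defined at $\pi_1(U)$, and in that case $\bar A(\pi_0([f])) = \bar\AA(\pi_1(U))$, because $\bar A = \bar\AA \circ \overline{\nabla}$ as rational maps on the GIT quotients.

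Next I would invoke Theorem \ref{T:Fermat}: it produces a dense open $Y_k \subset Z_k$ on which $\bar\AA$ is defined (after passing to $\pi_1$), and for every $U \in Z_n$ and generic $U \in Z_{n-1}$ it gives the explicit value $\bar\AA(\pi_1(U)) = \pi_2(z_1^{n(d-1)}+\cdots+z_n^{n(d-1)})$. For the case $k=n$: every $f \in \Sigma_n$ maps under $\nabla$ into $Z_n$, and Theorem \ref{T:Fermat} asserts the value of $\bar\AA$ at \emph{every} point of $\pi_1(Z_n)$ (indeed Corollary \ref{C:Fermat} computes $\AA_{\Gr}(U)$ on all of $Z_n$, and \eqref{E:decomposable-A} forces $\bar g_n = z_n^{n(d-1)-1}$ there), so $\bar A$ is defined at \emph{every} point of $\pi_0(\Sigma_n)$ with the stated Fermat value. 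For the case $k = n-1$: a generic $f \in \Sigma_{n-1}$ maps to a generic $U \in Z_{n-1}$ — here I should check that $\nabla(\Sigma_{n-1})$ is dense in $Z_{n-1}$, which follows from $\Sigma_{n-1} = \nabla^{-1}(Z_{n-1})$ together with the fact that $\nabla$ restricted to the relevant locus is dominant onto $Z_{n-1}$ (the fibers of $\nabla$ over $Z_{n-1}$ are nonempty since any $U \in Z_{n-1}$ spanned by partials of some $f$ pulls back, and generically we can arrange $f \in \Sigma_{n-1}$; alternatively, since $Z_{n-1}$ is irreducible by Lemma \ref{L:non-empty} and $\overline\nabla$ is a closed immersion hence $\pi_1(Z_{n-1})$ is in the image of $\pi_0(\Sigma_{n-1})$) — so genericity in $\Sigma_{n-1}$ transfers to genericity in $Z_{n-1}$, and Theorem \ref{T:Fermat} applies to give the same Fermat value.

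The only real subtlety, and the step I expect to require the most care, is matching the two notions of ``generic'': Theorem \ref{T:Fermat} gives the value of $\bar\AA$ on a dense open $Y_{n-1} \subset Z_{n-1}$, and I must argue that $\nabla^{-1}(Y_{n-1}) \cap \Sigma_{n-1}$ is dense in $\Sigma_{n-1}$, i.e. that $\nabla|_{\Sigma_{n-1}} \colon \Sigma_{n-1} \to Z_{n-1}$ is dominant. Since $\Sigma_{n-1} = \nabla^{-1}(Z_{n-1})$ by definition, dominance is equivalent to $\nabla(\Sigma_{n-1})$ meeting every nonempty open of $Z_{n-1}$; because $Z_{n-1}$ is irreducible (Lemma \ref{L:non-empty}) it suffices that $\nabla(\Sigma_{n-1})$ be nonempty and, being a constructible set whose closure must then be all of $Z_{n-1}$, I need only exhibit one $U$ in a prescribed dense open of $Z_{n-1}$ lying in the image — which is immediate since a generic such $U$ is $\nabla$ of a form with exactly $n-1$ ordinary double points in general position by the Jacobian criterion (this is exactly the content of the identification $\Sigma_{n-1} = \nabla^{-1}(Z_{n-1})$ together with Lemma \ref{L:non-empty}'s nonemptiness of $\Sigma_{n-1}$). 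Once this is settled, the corollary follows by transport of Theorem \ref{T:Fermat} across the closed immersion $\overline\nabla$, and no further computation is needed.
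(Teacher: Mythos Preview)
Your overall strategy is exactly what the paper intends: the corollary is stated without proof because it is meant to be an immediate transport of Theorem~\ref{T:Fermat} across the closed immersion $\overline{\nabla}$, using $\bar A=\bar{\AA}\circ\overline{\nabla}$ and $\Sigma_k=\nabla^{-1}(Z_k)$.  So the architecture of your argument is correct.

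There is, however, a genuine gap in the step you yourself flag as the ``only real subtlety.''  You claim that $\nabla\vert_{\Sigma_{n-1}}\colon\Sigma_{n-1}\to Z_{n-1}$ is dominant, and you argue this by saying that a nonempty constructible subset of the irreducible variety $Z_{n-1}$ must have closure equal to $Z_{n-1}$.  That implication is false (a point is constructible), and in fact the dominance claim itself is false in general: already for $n=2$ and $d\geq 4$ one has $\dim\PP(S_{d+1})=d+1<2d-2=\dim\Grass(2,S_d)$, so $\nabla$ cannot be dominant onto $\Grass(n,S_d)$, and hence $\nabla(\Sigma_{n-1})=Z_{n-1}\cap\Image(\nabla)$ is a proper subvariety of $Z_{n-1}$.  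Your fallback sentence (``a generic such $U$ is $\nabla$ of a form with exactly $n-1$ ordinary double points'') is circular, since this is precisely the dominance statement in question.

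The fix is not to prove dominance (you cannot) but to observe that you only need $\nabla^{-1}(Y_{n-1})\cap\Sigma_{n-1}$ to be nonempty, and this follows from the same closure trick used in the proof of Corollary~\ref{C:Fermat}.  Namely, the condition ``$\nabla([f])\in W_{n,d}$ and $\AA_{\Gr}(\nabla([f]))\notin\frakRes$'' is open on $\PP(S_{d+1})$ and holds on all of $\Sigma_n$ by Corollary~\ref{C:Fermat}.  Since $\Sigma_n\neq\varnothing$ and $\Sigma_n\subset\overline{\Sigma_{n-1}}$ with $\overline{\Sigma_{n-1}}$ irreducible (Lemma~\ref{L:non-empty}), this open set meets $\Sigma_{n-1}$ in a dense open subset.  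Any $f$ in that subset has $\nabla([f])\in Z_{n-1}\cap\{\AA_{\Gr}\notin\frakRes\}=Y_{n-1}$, and Theorem~\ref{T:Fermat} applies.
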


\begin{corollary}
\label{C:contraction}
When $n=2$, the rational map $\bar A$ 
contracts the discriminant divisor to a point (corresponding to the orbit of the Fermat form in $\D_{2d-4}$) for all $d\ge 3$. When $n=3$, the rational map $\bar A$ contracts the discriminant divisor to a lower-dimensional subvariety if $d\ge 3$. More generally, for every $n\ge 4$ there exists $d_0$ such that for all $d\ge d_0$ the map $\bar A$ contracts the discriminant divisor to a lower-dimensional subvariety. 
\end{corollary}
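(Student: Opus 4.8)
The plan is to reduce the assertion to a dimension estimate that combines Corollary~\ref{cor:n-1,n}, Theorem~\ref{T:Fermat}, and the fact (Theorem~\ref{T:gradient}) that $\overline{\nabla}$ is a closed immersion; set $N:=n(d-1)$. First I would record the relevant features of the discriminant. The divisor $\Delta\gitq\SL(n)\subset\PP(S_{d+1})^{ss}\gitq\SL(n)$ is irreducible, being the closure of $\pi_0(\Sigma_1)$ with $\Sigma_1$ — the locus of one-nodal hypersurfaces — irreducible by Lemma~\ref{L:non-empty} and dense in $\Delta$ because a generic singular hypersurface is one-nodal; since for $d\ge 3$ the $\PGL(n)$-stabilizer of a generic one-nodal hypersurface is finite, it has dimension $\binom{d+n}{n-1}-n^2-1$. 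By Corollary~\ref{cor:n-1,n} the rational map $\bar A$ is defined at every point of $\pi_0(\Sigma_n)\subseteq\Delta\gitq\SL(n)$, so its domain meets the irreducible variety $\Delta\gitq\SL(n)$, hence meets it in a dense open subset. In the case $n=2$ the conclusion is then immediate: here $\Sigma_1=\Sigma_{n-1}$, so Corollary~\ref{cor:n-1,n} already shows that on a dense open subset of $\pi_0(\Sigma_1)$ — equivalently a dense open subset of the discriminant — the morphism $\bar A$ is constant, equal to the orbit of the Fermat form; a morphism constant on a dense open subset of an irreducible variety is constant on all of its domain, so $\bar A$ contracts the discriminant to that single point.

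For $n\ge 3$ the generic point of $\Delta\gitq\SL(n)$ is $\pi_0(\Sigma_1)$, and the first step is to locate the image of $\bar A$ there. Since $\Sigma_1$ is $\SL(n)$-invariant and open in $\Delta$, the locus $\pi_0(\Sigma_1)$ is open and dense in $\Delta\gitq\SL(n)$, so $\bar A$ is defined on a dense open subset of $\pi_0(\Sigma_1)$. On it, writing $\bar A=\bar\AA\circ\overline{\nabla}$ with $\overline{\nabla}$ everywhere defined and noting $\overline{\nabla}\bigl(\pi_0(\Sigma_1)\bigr)\subseteq\overline{\pi_1(Z_1)}$, the value of $\bar A$ lies in the closure of $\bar\AA\bigl(\pi_1(Y_1)\bigr)$, where $Y_1\subseteq Z_1$ is the dense open locus of Theorem~\ref{T:Fermat}; by Theorem~\ref{T:Fermat} with $k=1$, this closure consists of images under $\pi_2$ of polystable $1$-partial Fermat forms $z_1^{N}+G(z_2,\dots,z_n)$ with $G\in\CC[z_2,\dots,z_n]_{N}$. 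Hence, by irreducibility of $\Delta\gitq\SL(n)$ and continuity,
\[
\overline{\bar A\bigl(\Delta\gitq\SL(n)\bigr)}\subseteq\overline{X},
\]
where $X$ is the image in $\PP(\D_{N})^{ss}\gitq\SL(n)$ of the family $\bigl\{[z_1^{N}+G]:G\in\CC[z_2,\dots,z_n]_{N}\bigr\}$.

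The second step bounds $\dim\overline{X}$. The assignment $G\mapsto[z_1^{N}+G]$ is an injective morphism from an affine space of dimension $\binom{N+n-2}{n-2}$, and two values $[z_1^{N}+G]$, $[z_1^{N}+G']$ have the same image in the quotient whenever $G'$ lies in the $\GL(n-1)$-orbit of $G$ (acting on $z_2,\dots,z_n$), an action realized inside $\SL(n)$ after rescaling $z_1$. Since $N=n(d-1)\ge 6$ and $n-1\ge 2$, a generic degree-$N$ form in $n-1$ variables has finite automorphism group, so these orbits are generically of dimension $(n-1)^2$; therefore $\dim\overline{X}\le\binom{n(d-1)+n-2}{n-2}-(n-1)^2$. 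It remains to verify
\[
\binom{n(d-1)+n-2}{n-2}-(n-1)^2 \;<\; \binom{d+n}{n-1}-n^2-1.
\]
The right-hand side is a polynomial in $d$ of degree $n-1$ and the left of degree $n-2$, so for $n\ge 3$ the inequality holds once $d$ is large, which gives the statement for $n\ge 4$; and for $n=3$ it simplifies to $3d-6<\tfrac12\bigl(d^2+5d-14\bigr)$, i.e.\ to $(d-2)(d+1)>0$, which holds precisely for $d\ge 3$.

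I expect the main obstacle to be the dimension bound on $\overline{X}$, namely producing the lower bound $(n-1)^2$ on the generic fibre of $\{[z_1^{N}+G]\}\to X$: one must argue that the gradient span $\nabla[z_1^{N}+G]=\langle z_1^{N-1},\partial G/\partial z_2,\dots,\partial G/\partial z_n\rangle$ contains, for generic $G$, a \emph{unique} line of $(N-1)$-st powers of linear forms, which forces any $\SL(n)$-equivalence between two forms $z_1^{N}+G$ and $z_1^{N}+G'$ to preserve $\langle z_1\rangle$ and hence, using that a generic degree-$N$ form in $n-1$ variables has only finitely many automorphisms, to be (up to rescaling $z_1$) a $\GL(n-1)$-transformation of $z_2,\dots,z_n$. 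This uniqueness is precisely where the size of $N=n(d-1)$ matters, and it dovetails with the three regimes in the statement. The remaining ingredients — the containment of the first step, the irreducibility inputs, and the binomial comparison — are formal consequences of Theorems~\ref{T:gradient} and~\ref{T:Fermat}, Corollary~\ref{cor:n-1,n}, and Lemma~\ref{L:non-empty}.
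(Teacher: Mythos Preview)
Your proof is correct and follows essentially the same route as the paper: handle $n=2$ via Corollary~\ref{cor:n-1,n}, and for $n\ge 3$ invoke Theorem~\ref{T:Fermat} with $k=1$ to land $\bar A(\pi_0(\Sigma_1))$ in the $1$-partial Fermat locus, then compare its dimension with that of the discriminant divisor. The only difference is that you justify the dimension bound on the $1$-partial Fermat locus more carefully (obtaining $\binom{n(d-1)+n-2}{n-2}-(n-1)^2$), whereas the paper simply asserts the bijection with linear equivalence classes of degree $n(d-1)$ forms in $n-1$ variables and, for $n\ge 4$, uses the cruder bound $\binom{n(d-1)+n-2}{n-2}$, which already suffices asymptotically.
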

\begin{proof} Notice that $\Sigma_1$ is dense in the discriminant divisor $\Delta$. Hence, for $n=2$ the statement follows from Corollary \ref{cor:n-1,n}.

When $n=3$, Theorem \ref{T:Fermat} implies that $\bar A(\pi_0(\Sigma_1))$ lies in the locus of a $1$-partial Fermat form in $\D_{3(d-1)}$.
The linear equivalence classes of $1$-partial ternary Fermat forms are in bijection with the linear equivalence classes of binary degree $3(d-1)$
forms. The dimension of this locus is $3d-6$, which for $d\ge 3$ is strictly less than the dimension $\binom{d+3}{2}-10$ of the discriminant divisor.

If $n\ge 4$, by Theorem \ref{T:Fermat} the set $\bar A(\pi_0(\Sigma_1))$ lies in the locus of a $1$-partial Fermat form in $\D_{n(d-1)}$. The linear equivalence classes of $1$-partial Fermat forms in $n$ variables are in bijection with the linear equivalence classes of degree $n(d-1)$
forms in $n-1$ variables. The dimension of this locus is ${n(d-1)+(n-2)}\choose{n-2}$, which for sufficiently large $d$ is strictly less than the dimension of the discriminant divisor ${{(d+1)+(n-1)}\choose{n-1}}-(n^2+1)$.
\end{proof}

We conclude the paper with an alternative proof of the main fact of \cite{alper-isaev-assoc} (see Proposition 4.3 therein).

\begin{corollary}[Generic smoothness of associated forms]
\label{C:generic-smoothness}
The closure of $\Image A$ in $\PP(\D_{n(d-1)})^{ss}$ contains the orbit 
\[
\SL(n)\cdot \left\{z_1^{n(d-1)}+\cdots+z_n^{n(d-1)}\right\}
\]
of the Fermat hypersurface.  Consequently, $A(f)$ is a smooth form for a generic smooth $f \in S_{d+1}$.
\end{corollary}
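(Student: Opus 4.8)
The plan is to deduce both assertions directly from Corollary~\ref{cor:n-1,n}, which we have already established. First I would recall that the rational map $\bar A$ is defined at every point of $\pi_0(\Sigma_n)$, and that for each $f\in\Sigma_n$ one has $\bar A(\pi_0(f))=\pi_2\bigl(z_1^{n(d-1)}+\cdots+z_n^{n(d-1)}\bigr)$. Since $\Sigma_n$ is non-empty by Lemma~\ref{L:non-empty}, the Fermat point lies in the image of the extended rational map. To conclude that it lies in the closure of $\Image A$ proper, I would use that $\bar A$ is defined on the dense open subset $\PP(S_{d+1})_\Delta\gitq\SL(n)$ of the source where it literally equals the descent of $A$, together with the fact that $\Sigma_n\subset\overline{\Delta}$ lies in the boundary; by continuity of $\bar A$ along a curve in $\PP(S_{d+1})^{ss}$ through a point of $\Sigma_n$ whose general member is a smooth hypersurface (such a curve exists because $\Sigma_n$ is in the closure of the smooth locus — indeed a general deformation of an $n$-nodal hypersurface is smooth), the value $\pi_2\bigl(\sum z_i^{n(d-1)}\bigr)$ is a limit of values $\pi_2(A(f_t))$ with $f_t$ smooth. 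This gives the first claim: the orbit of the Fermat hypersurface lies in the closure of $\Image A$ inside $\PP(\D_{n(d-1)})^{ss}$.

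For the second claim, I would argue by upper semicontinuity of singularity. The locus of \emph{singular} forms in $\PP(\D_{n(d-1)})$ is a proper closed subset (the discriminant hypersurface), hence its preimage under the morphism $A$ (equivalently, its preimage in $\PP(S_{d+1})_\Delta$, which is $\SL(n)$-invariant and descends) is a closed subset $B$ of $\PP(S_{d+1})_\Delta\gitq\SL(n)$. If $B$ were all of $\PP(S_{d+1})_\Delta\gitq\SL(n)$, then $\Image\bar A$ would lie entirely in the singular locus of $\PP(\D_{n(d-1)})\gitq\SL(n)$, hence so would its closure; but the first claim shows the Fermat point $\pi_2\bigl(\sum z_i^{n(d-1)}\bigr)$ lies in that closure, and $\sum z_i^{n(d-1)}$ is a smooth form. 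This contradiction shows $B$ is a proper closed subset, so its complement is a dense open subset of $\PP(S_{d+1})_\Delta\gitq\SL(n)$ over which $A(f)$ is smooth; pulling back to $\PP(S_{d+1})_\Delta$ we conclude $A(f)$ is smooth for a generic smooth $f\in S_{d+1}$.

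The main obstacle is the first step: making rigorous that the Fermat point is genuinely in the closure of $\Image A$ and not merely in the image of the extended rational map on the boundary. The subtlety is that $\Sigma_n$ sits on the discriminant divisor, where $A$ itself is undefined, so one cannot simply evaluate. I expect to resolve this either by exhibiting an explicit one-parameter family $f_t\in S_{d+1}$ with $f_0$ the $n$-nodal form from Lemma~\ref{L:non-empty} and $f_t$ smooth for $t\neq 0$, and tracking $A(f_t)$ via the factorization $A=\bar\AA\circ\nabla$ through $\Grass(n,S_d)_{\Res}$ — using Corollary~\ref{C:Fermat} to identify the limiting value $\AA_{\Gr}(\nabla(f_0))=\nabla([\sum z_i^{n(d-1)}])$ — and then invoking that $\bar\AA$, being a locally closed immersion that moreover (by Theorem~\ref{T:gradient} and the diagram) extends compatibly with $\overline{\nabla}$, carries this limit to $\pi_2(\sum z_i^{n(d-1)})$; or, alternatively, by noting that since $\bar A$ is defined at $\pi_0(\Sigma_n)$ and $\Sigma_n$ meets the closure of the smooth locus $\PP(S_{d+1})_\Delta$, continuity of the extended morphism forces its value there to be a limit of values on the smooth locus, which is exactly what "closure of $\Image A$" means. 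Either route reduces the corollary to facts already in hand.
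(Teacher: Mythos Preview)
Your overall strategy matches the paper's, and your argument for the second claim is fine once the first is in hand. But there is a real gap in the first claim, and it is not where you locate it. You correctly deduce from Corollary~\ref{cor:n-1,n} and continuity that $\pi_2\bigl(\sum z_i^{n(d-1)}\bigr)$ lies in $\overline{\pi_2(\Image A)}$ inside the quotient $\PP(\D_{n(d-1)})^{ss}\gitq\SL(n)$. The statement, however, asks for the Fermat \emph{orbit} to lie in $\overline{\Image A}$ \emph{upstairs} in $\PP(\D_{n(d-1)})^{ss}$, and your line ``which is exactly what `closure of $\Image A$' means'' conflates these two assertions. Both of your proposed routes terminate at the quotient level (your explicit-family route also ends with ``carries this limit to $\pi_2(\sum z_i^{n(d-1)})$''), so neither actually closes the gap as written.

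The missing ingredient---and essentially the paper's entire proof---is that the $\SL(n)$-orbit of the Fermat form is \emph{closed} in $\PP(\D_{n(d-1)})^{ss}$. With this, the lift is immediate: $\overline{\Image A}$ is closed and $\SL(n)$-invariant, so $\pi_2(\overline{\Image A})$ is closed in the quotient and hence contains $\pi_2(\text{Fermat})$; thus some $F\in\overline{\Image A}$ has $\pi_2(F)=\pi_2(\text{Fermat})$, whence $\overline{\SL(n)\cdot F}$ contains the closed Fermat orbit, and therefore so does $\overline{\Image A}$. Alternatively, your explicit-family idea can be made to work entirely upstairs without passing to the quotient: extend $t\mapsto A(f_t)$ across $t=0$ by properness of $\PP(\D_{n(d-1)})$, and note that the limit $F$ satisfies $\nabla(F)=\AA_{\Gr}(\nabla f_0)=\nabla(\text{Fermat})$, so $F\in\Theta(\text{Fermat})\cdot[\text{Fermat}]\subset\SL(n)\cdot\text{Fermat}$ by Lemma~\ref{L:gradient-fiber}. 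Either fix is short; the point is that the subtlety you flagged (extension to the boundary) is not the issue, while the descent--lift step you treated as automatic is.
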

\begin{proof}
By Corollary \ref{cor:n-1,n}, we have
\[
\pi_2(z_1^{n(d-1)}+\cdots+z_n^{n(d-1)}) \in \Image(\bar A).
\]
Since the orbit of the Fermat hypersurface is closed in $\PP(\D_{n(d-1)})^{ss}$, it lies in the closure of $\Image A$.
\end{proof}

\bibliographystyle{plain}
\bibliography{associated_arxiv}{}
\end{document}